\theoremstyle{plain}
\newtheorem{lemma}{Lemma}[section]
\newtheorem{theorem}[lemma]{Theorem}
\newtheorem{proposition}[lemma]{Proposition}
\newtheorem{corollary}[lemma]{Corollary}
\newtheorem{example}[lemma]{Example}
\theoremstyle{remark}
\newtheorem{remark}{Remark}
\newtheorem{notation}{Notation}
\newtheorem{organization}{Organization of the paper}
\numberwithin{equation}{section}
\newcommand{\seq}{\subseteq}
\newcommand{\snqq}{\subsetneqq}
\newcommand{\stm}{\setminus}
\newcommand{\longc}{,\dotsc,}
\newcommand{\longe}{=\dotsb=}
\newcommand{\longp}{+\dotsb+}
\newcommand{\longge}{\ge\dotsb\ge}
\newcommand{\longu}{\cup\dotsb\cup}
\newcommand{\longd}{\mid\dots\mid}
\newcommand{\longs}{\seq\dotsb\seq}
\newcommand{\longop}{\oplus\dotsb\oplus}
\newcommand{\mmod}[1]{\!\!\pmod{#1}}
\newcommand{\<}{\langle}
\renewcommand{\>}{\rangle}
\newcommand{\lpr}{\left(}
\newcommand{\rpr}{\right)}
\newcommand{\lfr}{\left\{}
\newcommand{\rfr}{\right\}}
\newcommand{\lfl}{\left\lfloor}
\newcommand{\rfl}{\right\rfloor}
\DeclareMathOperator{\diam}{diam}
\DeclareMathOperator{\lcm}{lcm}
\DeclareMathOperator{\ord}{ord}
\DeclareMathOperator{\rk}{rk}
\newcommand{\alp}{\alpha}
\newcommand{\bet}{\beta}
\newcommand{\eps}{\varepsilon}
\newcommand{\lam}{\lambda}
\renewcommand{\phi}{\varphi}
\newcommand{\oA}{\overline A}
\newcommand{\N}{{\mathbb N}}
\newcommand{\Z}{{\mathbb Z}}
\newcommand{\R}{{\mathbb R}}
\newcommand{\s}{{\mathbf s}}
\renewcommand{\t}{{\mathbf t}}
\begin{document}
\baselineskip=16pt

\title[Generating an abelian group]{How long does it take to generate
  a group?}

\author{Benjamin Klopsch}
\address{Mathematisches Institut \\ Heinrich-Heine-Universit\"at \\
  40225 D\"usseldorf \\ Germany}
\email{klopsch@math.uni-duesseldorf.de}

\author{Vsevolod F. Lev}
\address{Department of Mathematics \\ Haifa University at Oranim \\
  Tivon 36006 \\ Israel}
\email{seva@math.haifa.ac.il}

\thanks{At the time our collaboration started the first author held a
  PIMS postdoctoral fellowship at the University of Alberta (Canada)
  while the second author was a postdoctoral fellow at the Hebrew
  University of Jerusalem (Israel). We express our gratitude for the
  support provided by these institutions.}

\begin{abstract}
The diameter of a finite group $G$ with respect to a generating set $A$ is
the smallest non-negative integer $n$ such that every element of $G$ can be
written as a product of at most $n$ elements of $A \cup A^{-1}$. We denote
this invariant by $\diam_A(G)$. It can be interpreted as the diameter of the
Cayley graph induced by $A$ on $G$ and arises, for instance, in the context
of efficient communication networks.

In this paper we study the diameters of a finite \emph{abelian} group $G$
with respect to its various generating sets $A$. We determine the maximum
possible value of $\diam_A(G)$ and classify all generating sets for which
this maximum value is attained. Also, we determine the maximum possible
cardinality of $A$ subject to the condition that $\diam_A(G)$ is ``not too
small''. Connections with caps, sum-free sets, and quasi-perfect codes are
discussed.
\end{abstract}

\maketitle


\section{Introduction}\label{s:intro}

Let $G$ be a finite group and let $A$ be a subset of $G$. The
\emph{subgroup generated by $A$ in $G$} is
$$
\<A\> := \bigcap\,\{H \leq G \colon A \seq H\},
$$
the intersection of all subgroups containing $A$. This is the
smallest subgroup of $G$ lying above $A$. If $\<A\> = G$ then $A$ is
called a \emph{generating set for $G$}; so $A$ generates $G$ if and
only if it is not contained in any proper subgroup of $G$. Loosely
speaking, the aim of this paper is to determine whether and how
quickly $A$ generates $G$, using only information about the
cardinality of $A$.

For this we regard generation as a dynamic step-by-step process. Since
eventually we restrict to the situation where $G$ is abelian, we use
additive notation throughout. Let $A^\pm := (-A) \cup \{0\} \cup A$
denote the ``symmetric closure'' of $A$ with respect to addition. Thus
$\<A\>$ consists of all those $g \in G$ representable as a sum of
elements of $A^\pm$, and for every non-negative integer $\rho$ we
define
$$
\<A\>_\rho := \rho A^\pm = \{a_1 \longp a_\rho \colon a_i \in
A^\pm\} \seq G,
$$
the set of all $g\in G$ representable as a sum of \emph{at most
  $\rho$} elements of $A^\pm$. Evidently, these sets form an ascending
chain $\{0\}=\<A\>_0\seq\<A\>_1\seq\dots$, and their union
$\bigcup_{\rho\ge 0}\<A\>_\rho=\<A\>$ provides a bottom-up description
of the subgroup generated by $A$. We notice that
$\<A^\pm\>_\rho=\<A\>_\rho$ for every $\rho \in \N_0:=\N\cup\{0\}$.

We define the \emph{diameter of $G$ with respect to $A$} as
$$
\diam_A(G) := \min \{ \rho \in \N_0 \colon \< A\>_\rho = G \}.
$$
To explain this nomenclature we introduce a suitable length function. For
every $g\in G$ we define the \emph{length of $g$ with respect to $A$} as
$$
l_A(g) := \min \{ \rho \in \N_0 \colon g \in \<A\>_\rho \}.
$$
Here we agree that $\min \varnothing = \infty$, so that for $g\in
G$ we have $g\in\<A\>$ if and only if $l_A(g)<\infty$. Observe that
$l_A(g)$ is simply the minimum number of (not necessarily distinct)
elements of $A^\pm$ required to represent $g$ as their sum. Thus we
have
\begin{align*}
  \< A \>_\rho & = \{ g \in G \colon l_A(g) \leq \rho \} \\
\intertext{and}
  \diam_A(G) & = \max \{ l_A(g) \colon g \in G\}.
\end{align*}
{}From this point of view the diameter allows a simple graph-theoretic
interpretation. Indeed, $l_A(g)$ is the distance from zero to $g$ in the
Cayley graph induced by $A$ on $G$, and $\diam_A(G)$ is the diameter of this
graph. Moreover, $\diam_A(G)$ describes quantitatively ``how long'' it takes
to generate $G$ from $A$. In particular, $A$ is a generating set for $G$ if
and only if $\diam_A(G) < \infty$.

The length of a given element and the diameter of $G$ depend upon the choice
of $A \seq G$. In contrast, the \emph{absolute diameter}
$$
\diam(G) := \max \{ \diam_A(G) \colon \<A\> = G \}
$$
is an invariant of $G$ itself: every generating set produces the group in at
most that many steps.

The two problems addressed in this paper are:
\begin{itemize}
\item[\textbf{P1.}] What is the value of $\diam(G)$ and what are the
  generating sets $A\seq G$ such that $\diam_A(G) = \diam(G)$?
\item[\textbf{P2.}] How large can $\diam_A(G)$ be, given that $|A|$ is
  large?
\end{itemize}

Under the assumption that $G$ is \emph{abelian}, we solve the first
problem completely (see Theorems~\ref{t:thm21} and \ref{t:thm22}) and
we provide several partial answers to the second one (see
Theorems~\ref{t:thm27}, \ref{t:thm29}, and \ref{t:thm212}). Our
results are presented in detail in the next section.

At first sight the restriction to abelian groups may seem too strong ---
after all, finite abelian groups are rather trivial objects from a
group-theoretic point of view. But here we are concerned with combinatorial
properties of finite groups, and already the case where $G$ is homocyclic
leads to interesting applications in coding theory and other areas; see
Section~\ref{s:context}.

\begin{remark}
  Our starting point in this paper are representations of the elements
  of $G$ by ``algebraic sums'' $\pm a_1\pm\dotsb\pm a_n$. However, it
  would be equally natural to allow only ``pure sums'' $a_1\longp
  a_n$. Formally, we could have put $A^+ := A\cup\{0\}$ and then
  proceeded to define
  $\<A\>_\rho^+,\,\diam_A^+(G),\,l_A^+(g),\,\diam^+(G)$, etc. This
  approach leads to a significantly different theory, which we are
  going to cover in a separate paper.
\end{remark}

\begin{organization}
  In Sections~\ref{s:results} and \ref{s:context} we describe our
  results in detail and place them into a broader context.
  Section~\ref{s:auxiliary} contains a list of auxiliary results. All
  proofs are collected in Sections~\ref{s:abs_diam} to
  \ref{s:bounds_IV}.
\end{organization}

\begin{notation}
  The notation used is mostly standard, but the following list may be
  of help.
  \begin{tabbing}
    {gcd(n 1, . . .,n r) } \= \kill
    \, $\N,\N_0$, and $\Z$ \> the sets of positive, non-negative, and
    all integers, respectively.
  \end{tabbing}
For natural numbers $n,n_1, \ldots, n_r$:
  \begin{tabbing}
    {gcd(n 1, . . .,n r) } \= \kill
    \, $\Z_n$ (or $\Z / n\Z$) \> the group of residues modulo $n$; \\
    \, $\gcd(n_1\longc n_r)$ \> the greatest common divisor of $n_1\longc
     n_r$; \\
    \, $\lcm(n_1\longc n_r)$ \> the least common multiple of $n_1 \longc
     n_r$.
  \end{tabbing}
  For real numbers $x,y$:
  \begin{tabbing}
    {gcd(n 1, . . .,n r) } \= \kill
    \, $\lfloor x\rfloor$ \> the greatest integer not exceeding $x$; \\
    \, $\lceil x\rceil$ \> the smallest integer no smaller than $x$; \\
    \, $[x,y]$ \> the set of all integers $n$ satisfying $x \le n
     \le y$.
  \end{tabbing}
  For a non-negative integer $r$ and subsets $A,A_1 \longc A_r$ of an
  abelian group $G$:
  \begin{tabbing}
    {gcd(n 1, . . .,n r) } \= \kill
    \, $A_1 \longp A_r$ \> the set $\{a_1 \longp a_r \colon a_1 \in A_1
    \longc a_r \in A_r \}$; \\
    \, $rA$ \> the set $A\longp A$ ($r$ summands); \\
    \, $r \ast A$ \> the set $\{ra \colon a\in A\}$.
  \end{tabbing}
  (For $r=0$ the expressions $A_1\longp A_r$ and $rA$ should be
  interpreted as $\{0\}$.)

  Also, for integers $m$ and $n$ we write $m \mid n$ to indicate that
  $m$ divides $n$.
\end{notation}


\section{Summary of results}\label{s:results}

We state our results in several blocks: first we look at the absolute
diameter of a finite abelian group and then at the maximum size of
small diameter sets; cf.\ Problems~P1 and~P2 of the Introduction.

\subsection{The absolute diameter}

Up to isomorphism, every finite abelian group is completely
characterized by its type. Let $G$ be a finite abelian group of type
$(m_1,\ldots,m_r)$; that is, $G\cong\Z_{m_1}\longop\Z_{m_r}$ where
$1\neq m_1\longd m_r$. The number $r$ is called the \emph{rank of $G$}
and denoted $\rk(G)$; it is the minimum number of elements required to
generate $G$. A \emph{standard generating set for $G$} is a subset
$A=\{a_1\longc a_r\}\seq G$ such that $G=\<a_1\>\longop\<a_r\>$ and
$\ord(a_i)=m_i$ for all $i \in [1,r]$. It is worth pointing out that
the trivial group $\{ 0 \}$ has type $()$, rank $0$, and precisely one
standard generating set, namely $\varnothing$.

Perhaps not surprisingly, the diameter of $G$ with respect to a
standard generating set is as large as possible. Indeed, if $A$ is a
standard generating set for $G$, then plainly $\diam_A(G) =
\sum_{i=1}^r \lfloor m_i/2 \rfloor$, and we establish

\begin{theorem}\label{t:thm21}
A finite abelian group $G$ of type $(m_1,\ldots,m_r)$ has diameter
$$
\diam(G) = \sum_{i=1}^r \lfloor m_i/2 \rfloor.
$$
\end{theorem}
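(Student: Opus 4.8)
The lower bound $\diam(G)\ge\sum_{i=1}^r\lfloor m_i/2\rfloor$ is already in hand: a standard generating set $A_0=\{a_1,\dots,a_r\}$ realises length $\sum_i\lfloor m_i/2\rfloor$ at the element $\sum_i\lfloor m_i/2\rfloor\,a_i$, so $\diam(G)\ge\diam_{A_0}(G)=\sum_i\lfloor m_i/2\rfloor$. The whole difficulty is therefore the matching upper bound, namely that \emph{every} generating set $A$ satisfies $\diam_A(G)\le D(G)$, where I abbreviate $D(G):=\sum_{i=1}^r\lfloor m_i/2\rfloor$. I would prove this by induction on $|G|$, the case $|G|=1$ being trivial.

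For the inductive step fix a generating set $A$ and any $a\in A$ with $a\neq0$; put $d:=\ord(a)$ and pass to the quotient $\bar G:=G/\langle a\rangle$, writing $\bar A$ for the image of $A$ and $\pi$ for the projection. The key reduction is the inequality
\[
\diam_A(G)\ \le\ \lfloor d/2\rfloor+\diam_{\bar A}(\bar G).
\]
To see it, take $g\in G$ and a shortest representation $\pi(g)=\bar c_1+\dots+\bar c_t$ with $\bar c_i\in\bar A^\pm$ and $t=l_{\bar A}(\pi(g))\le\diam_{\bar A}(\bar G)$; lifting each $\bar c_i$ to the corresponding $c_i\in A^\pm$ gives $g-(c_1+\dots+c_t)\in\langle a\rangle$, and since $a\in A^\pm$ generates the cyclic group $\langle a\rangle$ of order $d$, this residual element is a sum of at most $\lfloor d/2\rfloor$ further terms from $A^\pm$. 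As $\bar A$ generates $\bar G$, the inductive hypothesis gives $\diam_{\bar A}(\bar G)\le\diam(\bar G)=D(\bar G)$, and hence $\diam_A(G)\le\lfloor d/2\rfloor+D(\bar G)$.

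It thus remains to establish the purely arithmetic estimate
\[
D(G)\ \ge\ \lfloor d/2\rfloor+D(\bar G)\qquad(\ast)
\]
for every nonzero $a$, which closes the induction. Here I would compare the invariant factors of $G$ and of $\bar G$. Listing them in \emph{descending} order as $M_1\ge\dots\ge M_r$ and $N_1\ge\dots\ge N_r$ (padding the shorter list by $1$'s, which do not affect $D$), the classical interlacing for a quotient by a cyclic subgroup gives the divisibility chain $M_{k+1}\mid N_k\mid M_k$ for all $k$, together with $\prod_k(M_k/N_k)=d$. Writing $r_k:=M_k/N_k$ and using $N_k\ge\prod_{j>k}r_j$, a telescoping computation yields $\sum_k(M_k-N_k)=\sum_k N_k(r_k-1)\ge d-1$; combined with the exact parity identity $\lfloor M_k/2\rfloor-\lfloor N_k/2\rfloor=\tfrac12(M_k-N_k)+\tfrac12([N_k\text{ odd}]-[M_k\text{ odd}])$ this already gives $(\ast)$ when $d$ is odd.

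The one delicate point---and the step I expect to be the main obstacle---is recovering the final half-unit when $d$ is even, where the telescoping bound $d-1$ falls short of $2\lfloor d/2\rfloor=d$ by exactly $1$. Here I would track the $2$-adic valuations $v_2(M_k),v_2(N_k)$ through the same interlacing: either some index carries the missing parity bonus (an index with $N_k$ odd and $M_k/N_k$ even), or else, as already happens in the smallest cases, the inequality $\sum_k(M_k-N_k)\ge d-1$ is not tight and the surplus supplies the needed unit. Verifying that one of these two alternatives always occurs is the crux of $(\ast)$; once it is settled, the induction is complete and $\diam(G)=D(G)$ follows.
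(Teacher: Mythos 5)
Your route is genuinely different from the paper's. The paper proves the upper bound by induction on the rank $r$: given a generating set $A=\{a_1,\dots,a_s\}$, it locates a minimal ``tail'' $\{a_t,\dots,a_s\}$ whose element orders have least common multiple $m_r$, splits $G=H\oplus\<a\>$ with $\ord(a)=m_r$, projects the remaining generators into $H$, and absorbs the cost of the cyclic component by the elementary lcm estimate of Lemma~\ref{l:lem52} (if every $n_i$ is needed for the lcm, then $\lfloor \lcm/2\rfloor>\sum_i\lfloor n_i/2\rfloor$). You instead induct on $|G|$, quotient by a single nonzero $a\in A$, and push everything into the arithmetic inequality $(\ast)$ on invariant factors. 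Your reduction $\diam_A(G)\le\lfloor d/2\rfloor+\diam_{\bar A}(\bar G)$ is correct, and your telescoping bound $\sum_k(M_k-N_k)\ge d-1$ is correct: it uses $N_k\ge M_{k+1}\ge r_{k+1}N_{k+1}\ge\dots\ge\prod_{j>k}r_j$, which the interlacing supplies, and the pointwise bound $\lfloor M_k/2\rfloor-\lfloor N_k/2\rfloor\ge\frac12(M_k-N_k)$ holds because $N_k\mid M_k$ rules out the only bad parity pattern ($M_k$ odd, $N_k$ even). The trade-off between the two proofs: the paper's argument is entirely self-contained, whereas yours imports the interlacing theorem for quotients by cyclic subgroups ($M_{k+1}\mid N_k\mid M_k$); that statement is classical (reduce to $p$-groups via the primary decomposition and dualize the corresponding statement for subgroups with cyclic quotient), but it is not proved in the paper and you should prove or cite it.

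The step you flag as the crux --- recovering the missing half-unit when $d$ is even --- is not actually an obstacle, and no $2$-adic case analysis is needed. You have already established
$$
D(G)-D(\bar G)\;=\;\sum_k\Big(\lfloor M_k/2\rfloor-\lfloor N_k/2\rfloor\Big)\;\ge\;\tfrac12\sum_k(M_k-N_k)\;\ge\;\tfrac{d-1}{2}.
$$
But the left-hand side is an integer, while for even $d$ the bound $(d-1)/2$ is not; hence in all cases
$$
D(G)-D(\bar G)\;\ge\;\Big\lceil\tfrac{d-1}{2}\Big\rceil\;=\;\Big\lfloor\tfrac{d}{2}\Big\rfloor,
$$
which is exactly $(\ast)$. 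This one line closes your induction, so your proposal, completed this way (and with the interlacing fact justified), is a valid alternative proof of Theorem~\ref{t:thm21}.
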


Conversely, if $A$ is a generating set for $G$ with
$\diam_A(G)=\diam(G)$, then $A$ is ``nearly standard''; a small
wrinkle is observed if $G$ has invariant factors of order three. To
make this precise, for $m\in\N$ we define
\begin{equation}\label{def_of_nu}
  \nu_m(G) := \#\{i\in[1,r]\colon m_i=m\},
\end{equation}
the number of components of $(m_1\longc m_r)$ equal to $m$. We
notice that, if $m$ is prime and $\nu_m(G) \ne 0$, then $m_i=m$ for
all $i\in[1,\nu_m(G)]$. Furthermore, if $m'$ and $m''$ are co-prime,
then at least one of $\nu_{m'}(G)$ and $\nu_{m''}(G)$ is zero.

\begin{theorem}\label{t:thm22}
  Let $G$ be a finite abelian group of rank $r$. Then for every subset
  $A \seq G$ the following assertions are equivalent:
  \begin{itemize}
  \item[(i)] $\diam_A(G) = \diam(G)$;
   \item[(ii)] there exists a standard generating set $B=\{b_1\longc
    b_r\}$ for $G$ such that
    $$
    B^\pm \seq A^\pm \seq \big(B \cup \{ b_{2i-1}+b_{2i} \colon i
    \in [1,\nu_3(G)/2] \}\big)^\pm.
    $$
  \end{itemize}
\end{theorem}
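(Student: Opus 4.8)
The plan is to prove the two implications by quite different means: $(ii)\Rightarrow(i)$ is a short monotonicity-and-computation argument, while $(i)\Rightarrow(ii)$ carries the real weight.

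For $(ii)\Rightarrow(i)$ I would first record that $\diam_A(G)$ is non-increasing under enlarging the symmetric closure, since $\langle A\rangle_\rho=\rho A^\pm$ only grows, so $\langle A\rangle_\rho\seq\langle A'\rangle_\rho$ whenever $A^\pm\seq(A')^\pm$. Setting $C:=B\cup\{b_{2i-1}+b_{2i}\colon i\in[1,\nu_3(G)/2]\}$, the hypothesis $B^\pm\seq A^\pm\seq C^\pm$ gives $\diam_C(G)\le\diam_A(G)\le\diam_B(G)$. Since $B$ is standard, $\diam_B(G)=\sum_i\lfloor m_i/2\rfloor=\diam(G)$ by Theorem~\ref{t:thm21}, so it remains only to check $\diam_C(G)=\diam(G)$, after which the squeeze forces equality throughout. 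For this I would use that length is additive over a direct-sum decomposition whose generating set splits accordingly: if $G=G'\oplus G''$ and $C=C'\sqcup C''$ with $C'\seq G'$ and $C''\seq G''$, then projecting onto the summands (for the lower bound) and concatenating representations (for the upper bound) yields $l_C(g'+g'')=l_{C'}(g')+l_{C''}(g'')$, hence $\diam_C(G)=\diam_{C'}(G')+\diam_{C''}(G'')$. The set $C$ splits along the cyclic factors $\langle b_j\rangle$: each admissible pair $\{b_{2i-1},b_{2i},b_{2i-1}+b_{2i}\}$ lives in $\langle b_{2i-1}\rangle\oplus\langle b_{2i}\rangle\cong\Z_3^2$, and every remaining generator $b_j$ lives alone in $\langle b_j\rangle\cong\Z_{m_j}$. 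So the whole computation reduces to the two local identities $\diam_{\{b_j\}}(\Z_{m_j})=\lfloor m_j/2\rfloor$ (immediate, as $b_j$ is a unit) and $\diam_{\{e_1,e_2,e_1+e_2\}}(\Z_3^2)=2$ (a direct check: only $\pm(e_1-e_2)$ lie outside the symmetric closure, and they have length $2=\lfloor3/2\rfloor+\lfloor3/2\rfloor$). Summing over the blocks gives $\diam_C(G)=\sum_i\lfloor m_i/2\rfloor=\diam(G)$.

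For $(i)\Rightarrow(ii)$, note first that $\diam_A(G)=\diam(G)<\infty$ forces $A$ to generate $G$; it then remains to produce a standard $B$ realizing the lower inclusion $B^\pm\seq A^\pm$ and to prove the upper inclusion $A^\pm\seq C^\pm$. The lower inclusion is the heart of the matter: one must extract a genuine standard generating set from the bare numerical equality $\diam_A(G)=\diam(G)$. I expect this to rest on the equality case of the growth estimate underlying Theorem~\ref{t:thm21}. The balls $\langle A\rangle_0\seq\langle A\rangle_1\seq\dotsb$ reach $G$ in exactly $\diam(G)$ steps, the slowest rate compatible with generation; feeding the sumset relation $\langle A\rangle_{\rho+1}=\langle A\rangle_\rho+A^\pm$ into a Kneser/Freiman-type analysis and saturating all the resulting inequalities should force $A^\pm$ to be an arithmetic-progression-like configuration aligned with the invariant-factor decomposition, and in particular to contain $\{0,\pm b_1\longc\pm b_r\}$ for some standard $B=\{b_1\longc b_r\}$. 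Given such a $B\seq A$, the upper inclusion is a finite, local analysis. Because $\diam_A(G)$ can only decrease as $A^\pm$ grows and $\{0,\pm b_i\}\cup\{\pm a\}\seq A^\pm$ for each $a\in A^\pm$, the equality $\diam_A(G)=\diam(G)$ forces $\diam_{B\cup\{a\}}(G)=\diam(G)$ for every single $a\in A^\pm$; that is, each element of $A^\pm$ is \emph{individually harmless}. Here I would prove the key local lemma: adjoining one element $a\notin B^\pm$ to $B$ keeps the diameter maximal precisely when $a=\pm(b_j+b_k)$ (for suitably sign-adjusted standard generators) with $m_j=m_k=3$. Since only the antipodal elements $\sum_i\eps_i\lfloor m_i/2\rfloor b_i$ attain length $\diam(G)$ with respect to $B$, and adjoining $a$ only shortens lengths, $a$ preserves the diameter exactly when it fails to shortcut at least one antipode; a case analysis shows this happens only in the stated $\Z_3$-diagonal situation — in every other case ($m=2$, or $m\ge4$, or unequal paired factors such as $\Z_3$ with $\Z_6$) the adjoined element shortens every antipode — whereas for $m_j=m_k=3$ the anti-diagonal elements $\pm(b_j-b_k)$ keep length $2$ and survive. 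A second, equally finite, step rules out two admissible pair-sums sharing a generator (e.g.\ $b_1+b_2$ and $b_1+b_3$ together do lower the diameter), so the pair-sums occurring in $A^\pm$ form a matching on the order-$3$ generators; after reordering $B$ and flipping signs this matching takes the displayed form, establishing $A^\pm\seq C^\pm$.

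The main obstacle is the lower inclusion. Passing from ``the group is generated as slowly as theoretically possible'' to ``$A$ literally contains a standard generating set'' is exactly the point where a precise structural, rather than merely numerical, input is needed, and coordinating the slow-growth equality simultaneously across several cyclic factors — while leaving room for the $\nu_3$-exception that the upper inclusion isolates — is the delicate part. By contrast, once a standard $B\seq A$ is in hand, the upper inclusion is a bounded computation carried out in one or two cyclic factors at a time.
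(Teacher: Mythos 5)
Your direction $(ii)\Rightarrow(i)$ and your ``upper inclusion'' analysis are sound and essentially coincide with the paper's argument (the splitting lemma is the paper's Lemma~\ref{l:lem51}, the $\Z_3\oplus\Z_3$ computations are its Lemma~\ref{l:lem54}, and the antipode/case analysis is exactly how the paper pins down elements of $A^\pm\stm B^\pm$). The genuine gap is the step you yourself flag as the main obstacle: the lower inclusion, i.e.\ producing a standard generating set $B$ with $B^\pm\seq A^\pm$ from the bare equality $\diam_A(G)=\diam(G)$. For this you offer only ``I expect this to rest on\dots'' and ``should force\dots'', which is a research plan, not a proof; and the plan as described is unlikely to work. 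Step-by-step Kneser saturation is \emph{not} a property of the extremal configurations: already for the standard generating set $A=\{b_1,b_2\}$ of $G=\Z_5\oplus\Z_5$ one has $|\<A\>_1|=5$ and $|\<A\>_2|=13$, so the increment $8$ exceeds $|A^\pm|-1=4$ and Kneser's hypothesis $|X+Y|\le|X|+|Y|-1$ fails at that step --- there are no saturated inequalities to exploit. Consistently with this, the paper's proof of Theorem~\ref{t:thm21} is not a sumset-counting argument at all, so there is no ``growth estimate underlying Theorem~\ref{t:thm21}'' whose equality case one could cite.

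What the paper actually does is prove a separate structural statement (its Lemma~\ref{l:lem53}): any subset of $G$ that is \emph{minimal under inclusion} subject to having diameter equal to $\diam(G)$ is a standard generating set. This is established by induction on the rank, revisiting the inductive proof of Theorem~\ref{t:thm21}: one splits off a generator $a_s$ of order $m_r$, writes $G=H\oplus\<a_s\>$, applies the induction hypothesis to the projected set in $H$, and then runs an explicit coefficient-exchange argument (replacing $\lam_j$ by $\mu_j=-\lam_j-\eps$ and adjusting the last coordinate) to show that if some $\ord(a_j)\nmid m_j$ then an element of maximal length could be rewritten with strictly smaller length --- a contradiction. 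Given this lemma, one takes any inclusion-minimal subset $B\seq A$ with $\diam_B(G)=\diam(G)$ (such exists, since shrinking $A$ can only increase the diameter while generation keeps it at most $\diam(G)$), and concludes $B$ is standard. Without this lemma, or some substitute for it, your proof of $(i)\Rightarrow(ii)$ does not get off the ground, since your entire local analysis presupposes that a standard $B\seq A^\pm$ has already been found.
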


\begin{corollary}\label{c:cor23}
  Let $G$ be a finite abelian group. Then for every subset
    $A\seq G$ satisfying $\diam_A(G) = \diam(G)$ we have
  $$
  \rk(G) \le |A| \le 1 + 2\rk(G) - \nu_2(G) + 2\lfloor \nu_3(G)/2
  \rfloor.
  $$
  Indeed, both bounds are sharp.
\end{corollary}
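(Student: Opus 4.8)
Let me understand the setup carefully.The plan is to read both bounds directly off the characterization in Theorem~\ref{t:thm22}. The lower bound is essentially free: since $\diam_A(G)=\diam(G)$ is finite, $A$ generates $G$, and so $|A|\ge\rk(G)$ by the definition of the rank as the minimal size of a generating set. This is attained by any standard generating set $A=B$, for which $\diam_A(G)=\sum_{i=1}^r\lfloor m_i/2\rfloor=\diam(G)$ while $|A|=r=\rk(G)$; thus the lower bound is sharp.

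For the upper bound I would apply Theorem~\ref{t:thm22} to obtain a standard generating set $B=\{b_1\longc b_r\}$ with $A\seq A^\pm\seq(B\cup C)^\pm$, where $C:=\{b_{2i-1}+b_{2i}\colon i\in[1,\nu_3(G)/2]\}$, so that $|A|\le|(B\cup C)^\pm|$ and everything reduces to counting the latter set. Here I would exploit the direct decomposition $G=\<b_1\>\longop\<b_r\>$: the support (the set of non-zero coordinates) of $b_j$ is the singleton $\{j\}$, while that of $b_{2i-1}+b_{2i}$ is the pair $\{2i-1,2i\}$, so the $r+\lfloor\nu_3(G)/2\rfloor$ elements of $B\cup C$ are distinct and non-zero, and no element of $C$ coincides with any $\pm b_j$. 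It then remains to see which of these are fixed under negation. A generator satisfies $b_j=-b_j$ exactly when $m_j=2$, and by the ordering $m_1\longd m_r$ these are precisely $b_1\longc b_{\nu_2(G)}$; on the other hand $\nu_3(G)\ne0$ forces $\nu_2(G)=0$ and puts the order-$3$ components first, so each $b_{2i-1}+b_{2i}$ has order $3$ and is never self-inverse. Counting $0$ once, the $\nu_2(G)$ self-inverse generators once each, the remaining $r-\nu_2(G)$ generators with their distinct negatives, and the $\lfloor\nu_3(G)/2\rfloor$ elements of $C$ with their distinct negatives yields
\[
|(B\cup C)^\pm|=1+\nu_2(G)+2\bigl(r-\nu_2(G)\bigr)+2\lfloor\nu_3(G)/2\rfloor=1+2\rk(G)-\nu_2(G)+2\lfloor\nu_3(G)/2\rfloor,
\]
which is exactly the asserted upper bound.

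To see that this bound is sharp I would take $A:=(B\cup C)^\pm$ itself: then $A^\pm=(B\cup C)^\pm$ trivially satisfies both inclusions in Theorem~\ref{t:thm22}(ii), whence $\diam_A(G)=\diam(G)$, while $|A|=|(B\cup C)^\pm|$ realizes the bound. The only point requiring care is the distinctness bookkeeping for $(B\cup C)^\pm$; once the support argument for the direct sum is in place, the remaining verifications (that negatives of distinct elements stay distinct, and that the order-$3$ elements of $C$ are not self-inverse) are routine.
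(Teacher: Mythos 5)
Your proposal is correct and follows essentially the same route as the paper: invoke Theorem~\ref{t:thm22} to trap $A^\pm$ inside $\big(B\cup\{b_{2i-1}+b_{2i}\}\big)^\pm$ and count that set as a disjoint union, using the observation that $\nu_3(G)\neq 0$ forces $\nu_2(G)=0$ (the paper splits this into two explicit cases rather than one unified count). Your explicit verification of sharpness via the (ii)$\Rightarrow$(i) direction of Theorem~\ref{t:thm22} is a small addition the paper leaves implicit, but it is not a different method.
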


For abelian groups these theorems provide a complete solution to
Problem~P1 of the Introduction. In contrast, for no infinite family
$\mathcal{H}$ of non-abelian finite groups such explicit formulae for
the diameters $\diam(H)$, $H \in \mathcal{H}$, seem to be known.


\subsection{The maximum size of small diameter sets:
  definitions}\label{s:subsec22}

Let $G$ be a finite abelian group. Intuitively, it is clear that the
larger a subset $A \seq G$, the smaller the corresponding diameter
$\diam_A(G)$. But to what extent does the size of $A$ alone guarantee
fast generation?

Suppose that $\rho \in \N$. We want to find an upper bound for the
sizes of generating sets $A$ for $G$ with $\diam_A(G) \ge \rho$.
Agreeing that $\max\varnothing=0$, we define
$$
\s_\rho(G) := \max \{ |A| \colon A \seq G \text{ such that }
\rho\le\diam_A(G) < \infty \}.
$$
Equivalently, we have $\s_\rho(G) = \max\{ |A|\colon A\seq G \text{
  such that } \<A\>_{\rho-1} \neq \<A\>=G \}$. The significance of
this new invariant stems from the observation that every generating
set $A$ for $G$ of size larger than $\s_\rho(G)$ surely generates $G$
in less than $\rho$ steps.

In order to better understand $\s_\rho(G)$ we introduce a related and perhaps
more fundamental invariant, $\t_\rho(G)$; this requires a short preparation.
Recall that the \emph{period} of a subset $S\seq G$ is the subgroup
$\pi(S):=\{g\in G\colon S+g=S\}\le G$, and $S$ is \emph{periodic} or
\emph{aperiodic} according to whether $\pi(S)\neq\{0\}$ or $\pi(S)=\{0\}$.
Clearly, $S$ is a union of $\pi(S)$-cosets; consequently, if
$\gcd(|S|,|G|)=1$ then $S$ is aperiodic. Another useful observation is that
the image of $S$ under the canonical homomorphism $G\to G/\pi(S)$ is an
aperiodic subset of the quotient group $G/\pi(S)$.

We say that a subset $A\seq G$ is \emph{$\rho$-maximal} if it is maximal
(under inclusion) subject to $\diam_A(G)\ge\rho$; that is, subject to
$\<A\>_{\rho-1}\neq G$. Plainly, we have
$$
\s_\rho(G) = \max \{ |A| \colon \text{$A$ is a $\rho$-maximal
  generating set for $G$}\}.
$$

One can construct $\rho$-maximal generating sets by a ``lifting process'' as
follows. Suppose that $H \lneqq G$, and let $\phi: G \rightarrow G/H$ denote
the canonical homomorphism. If $\oA = \oA{\;}^\pm$ is a generating set for
$G/H$, then the full pre-image $A := \phi^{-1}(\oA)$ is a generating set for
$G$ with $\diam_A(G) = \diam_{\oA}(G/H)$ and $\pi(A)\ge H$. Conversely,
suppose $A = A^\pm$ is a generating set for $G$ and $H \leq \pi(A)$. Then the
image $\oA = \phi(A)$ is a generating set for $G/H$ and $\diam_{\oA}(G/H) =
\diam_A(G)$. Furthermore, if $A$ is $\rho$-maximal then so is $\oA$, and if
$H = \pi(A)$ then $\oA$ is aperiodic. This shows that every $\rho$-maximal
generating set is induced by an aperiodic one. A natural point of view,
therefore, is to consider \emph{aperiodic} $\rho$-maximal generating sets as
``primitive'' and concentrate on their properties first. Accordingly, we let
$$
\t_\rho(G) := \max \{ |A| \colon \text{$A$ is an aperiodic
  $\rho$-maximal generating set for $G$} \},
$$
again subject to the agreement that $\max \varnothing = 0$.

We notice that, if $\rho > \diam(G)$, then $\rho > \diam_A(G)$ for all
generating subsets $A \subseteq G$; consequently, $G$ admits no
$\rho$-maximal generating sets and $\t_\rho(G) = 0$. On the other hand,
if $\rho \in [1,\diam(G)]$ then in the definition of $\t_\rho(G)$ we can
safely disregard the requirement that $A$ generates $G$. Indeed, suppose
that $A \seq G$ is $\rho$-maximal, but does not generate $G$. Then $A$ is
a proper subgroup of $G$. Suppose that in addition $A$ is aperiodic. Then
$A = \{0\}$ lies in the symmetric closure $B^\pm$ of every subset $B
\subseteq G$; it follows that $\diam(G) < \rho$. In summary, we have
$$
\t_\rho(G) =
\begin{cases}
  \max \{ |A| \colon \text{$A \seq G$ is aperiodic and $\rho$-maximal}
  \} & \text{if $\rho \in [1,\diam(G)]$,} \\
  0 & \text{if $\rho > \diam(G)$.}
\end{cases}
$$

The close connection between $\t_\rho(G)$ and $\s_\rho(G)$ is
described by

\begin{lemma}\label{l:lem24}
  Let $G$ be a finite abelian group and let $\rho \in [2,\diam(G)]$.
  Then
  $$
  \s_\rho(G) = \max \{ |H| \cdot \t_\rho(G/H) \colon H \lneqq G \}.
  $$
\end{lemma}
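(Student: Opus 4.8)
The plan is to prove the two inequalities $\s_\rho(G)\le\max\{|H|\cdot\t_\rho(G/H)\colon H\lneqq G\}$ and $\s_\rho(G)\ge\max\{|H|\cdot\t_\rho(G/H)\colon H\lneqq G\}$ separately, in each case passing between $G$ and its quotients by means of the lifting process recorded just before the statement. A preliminary observation that I will use throughout is that every $\rho$-maximal set $A$ satisfies $A=A^\pm$: since $\<A\>_{\rho-1}=(\rho-1)A^\pm$ depends on $A$ only through $A^\pm$, replacing $A$ by the larger set $A^\pm$ preserves the defining condition $\<A\>_{\rho-1}\ne G$, so maximality forces $A=A^\pm$. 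This makes the symmetry hypotheses of the lifting process available whenever we invoke it.

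For the upper bound I would fix a $\rho$-maximal generating set $A$ of maximal size $|A|=\s_\rho(G)$, which exists because $\rho\le\diam(G)$, and set $H:=\pi(A)$. The key point is that $H\lneqq G$: as $\rho\ge 2$ we have $A=\<A\>_1\seq\<A\>_{\rho-1}\ne G$, so $A$ is a nonempty proper subset of $G$ and hence its period is proper. Now the converse half of the lifting process applies with $H=\pi(A)$: the image $\oA=\phi(A)$ is an \emph{aperiodic} $\rho$-maximal generating set for $G/H$, whence $|\oA|\le\t_\rho(G/H)$. Since $A$ is a union of $\pi(A)$-cosets, $\phi$ restricts to an $|H|$-to-one map from $A$ onto $\oA$, giving $\s_\rho(G)=|A|=|H|\cdot|\oA|\le|H|\cdot\t_\rho(G/H)$, which is at most the asserted maximum.

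For the lower bound I would show that each admissible $H$ contributes. Fix $H\lneqq G$; if $\t_\rho(G/H)=0$ there is nothing to prove, so choose an aperiodic $\rho$-maximal generating set $\oA$ for $G/H$ with $|\oA|=\t_\rho(G/H)$. By the preliminary remark $\oA=\oA^\pm$, so the forward half of the lifting process yields a generating set $A:=\phi^{-1}(\oA)$ for $G$ with $\diam_A(G)=\diam_{\oA}(G/H)\ge\rho$, that is $\<A\>_{\rho-1}\ne G$, and with $|A|=|H|\cdot|\oA|$. This $A$ need not itself be $\rho$-maximal, but I can enlarge it to a set $A'\supseteq A$ that is maximal subject to $\<A'\>_{\rho-1}\ne G$; such $A'$ is $\rho$-maximal, it still generates $G$ because $A\seq A'$ already does, and $|A'|\ge|A|=|H|\cdot\t_\rho(G/H)$. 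Hence $\s_\rho(G)\ge|H|\cdot\t_\rho(G/H)$, and taking the maximum over $H\lneqq G$ completes this direction.

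The step I expect to require the most care is the lower bound, precisely because the lifted set $\phi^{-1}(\oA)$ is in general not $\rho$-maximal and must be enlarged; one then has to check that enlarging cannot destroy the generating property, which is immediate here since a superset of a generating set still generates. The only other genuinely load-bearing point is the verification that $\pi(A)\lneqq G$ in the upper bound, and this is exactly where the hypothesis $\rho\ge 2$ enters — for $\rho=1$ a $1$-maximal generating set is all of $G$, its period is $G$, and the formula would fail.
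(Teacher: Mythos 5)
Your proof is correct and takes essentially the same route as the paper's: both directions go through the lifting process, using $H=\pi(A)$ and the image $\phi(A)$ for the upper bound and the full pre-image $\phi^{-1}(\oA)$ of an extremal aperiodic $\rho$-maximal set for the lower bound. The only differences are cosmetic — you make explicit the facts that $\rho$-maximal sets are symmetric and that $\pi(A)\lneqq G$ (which the paper leaves implicit), and your final enlargement of $\phi^{-1}(\oA)$ to a $\rho$-maximal set is unnecessary, since by definition $\s_\rho(G)$ already dominates the cardinality of any generating set $A$ with $\diam_A(G)\ge\rho$.
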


Unless $G$ is trivial, the only $1$-maximal subset of $G$ is $G$
itself; so $\s_1(G) = |G|$ and $\t_1(G)=0$. In fact, it can happen
that $\t_\rho(G) = 0$ also for $\rho \in [2,\diam(G)]$.  Examples of
this kind can be constructed as follows.

\begin{example}\label{p:exam25}
  Let $n \in \N$ with $n \geq 2$, and let $G = \Z_2 \oplus
  \Z_{2^{n+1}}$. Then $\diam(G) = 1 + 2^n$, and every $2^n$-maximal
  subset of $G$ is periodic.
\end{example}

Perhaps it is feasible to classify all pairs $(G,\rho)$ such that
$\t_\rho(G) = 0$, but beyond Example~\ref{p:exam25} nothing much is
known.


\subsection{The maximum size of small diameter sets: explicit formulae}

We completely determine $\t_\rho(G)$ and $\s_\rho(G)$ in the following
particular cases:
\begin{itemize}
\item[(i)] $G$ is an arbitrary finite abelian group and
  $\rho\in\{1,2,3,\diam(G)\}$;
\item[(ii)] $G$ is a finite cyclic group and $\rho\in [1,\diam(G)]$ is
  arbitrary.
\end{itemize}
The case $\rho=1$ is settled by the observation preceding
Example~\ref{p:exam25}. The case $\rho=2$ is not difficult either.

\begin{proposition}\label{p:prop26}
  Let $G$ be a finite abelian group with $\diam(G) \geq 2$. Then
  $$
  \t_2(G) = \s_2(G) =
  \begin{cases}
    |G|-1 & \text{if $|G|$ is even,} \\
    |G|-2 & \text{if $|G|$ is odd.}
  \end{cases}
  $$
\end{proposition}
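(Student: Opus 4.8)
The plan is to trap both invariants between the asserted value from below and above, using the general inequality $\t_2(G)\le\s_2(G)$ together with an explicit construction of an extremal \emph{aperiodic} $2$-maximal generating set. The starting observation is that $\diam_A(G)\ge 2$ is equivalent to $\<A\>_1=A^\pm\neq G$, so in both definitions the operative constraint is that $A^\pm$ omits at least one element of $G$ while $A$ still generates $G$. I would also record the trivial direction $\t_2(G)\le\s_2(G)$: any aperiodic $2$-maximal generating set $A$ satisfies $2\le\diam_A(G)<\infty$, hence is admissible in the definition of $\s_2(G)$.

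First I would prove the upper bounds $\s_2(G)\le|G|-1$ when $|G|$ is even and $\s_2(G)\le|G|-2$ when $|G|$ is odd. If $A$ generates $G$ with $A^\pm\neq G$, choose $g\in G\stm A^\pm$; since $0\in A^\pm$ we have $g\neq 0$, and $g\notin A^\pm$ forces both $g\notin A$ and $-g\notin A$. For even $|G|$ this already gives $|A|\le|G|-1$. For odd $|G|$ there are no involutions, so $2g\neq 0$ and thus $g\neq -g$; hence $A$ omits the two \emph{distinct} elements $g,-g$, giving $|A|\le|G|-2$.

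Next I would match these bounds by exhibiting aperiodic $2$-maximal generating sets of the correct size, thereby bounding $\t_2(G)$ from below. If $|G|$ is even, fix an involution $t$ (available by Cauchy) and put $A:=G\stm\{t\}$. Because $-t=t$ one checks $A^\pm=A\neq G$, so $\diam_A(G)\ge 2$; the period is trivial since $A+h=G\stm\{t+h\}$ equals $A$ only for $h=0$; the set generates $G$ because a proper subgroup has order at most $|G|/2<|A|$ (the hypothesis $\diam(G)\ge 2$ forces $|G|\ge 4$); and $A$ is $2$-maximal since its only proper superset is $G$, with $G^\pm=G$. This gives $\t_2(G)\ge|G|-1$. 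If $|G|$ is odd, fix any $g\neq 0$ and put $A:=G\stm\{g,-g\}$ (with $g\neq -g$ as above). Again $A^\pm=A\neq G$; the set generates $G$ since $|A|=|G|-2>|G|/2$ for odd $|G|\ge 5$ (and $\diam(G)\ge 2$ forces $|G|\ge 5$); adjoining either $g$ or $-g$ yields a symmetric closure equal to $G$, so $A$ is $2$-maximal; and aperiodicity holds because $A+h=A$ forces $\{g+h,-g+h\}=\{g,-g\}$, whence either $h=0$ or $4g=0$, the latter impossible as $\gcd(4,|G|)=1$. This gives $\t_2(G)\ge|G|-2$.

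Finally I would assemble the estimates: in each parity case they form the chain $(\text{claimed value})\le\t_2(G)\le\s_2(G)\le(\text{claimed value})$, forcing equality throughout. The step I expect to need the most care is the verification of \emph{aperiodicity} of the constructed sets, since this is precisely what upgrades a bound on $\s_2(G)$ to the stronger equality $\t_2(G)=\s_2(G)$; in the odd case this verification hinges on $4$ being invertible modulo $|G|$, while in both cases one must also confirm that the near-full set still generates $G$, which is where the hypothesis $\diam(G)\ge 2$ enters through a lower bound on $|G|$.
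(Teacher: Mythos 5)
Your proof is correct and follows essentially the same route as the paper's: both rest on the observation that a set of diameter at least $2$ must omit a pair $\{g,-g\}$ with $g\neq 0$, and on the extremal sets $G\stm\{t\}$ (with $t$ an involution) and $G\stm\{g,-g\}$ (for $|G|$ odd), whose $2$-maximality, aperiodicity, and generation you verify directly. The only organizational difference is that the paper first classifies \emph{all} $2$-maximal sets as $G\stm\{a,-a\}$ with $a \neq 0$ (aperiodic precisely when $\ord(a)\notin\{1,4\}$) and then maximizes the cardinality, whereas you package the same facts as the sandwich $(\text{claimed value})\le\t_2(G)\le\s_2(G)\le(\text{claimed value})$.
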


For the next theorem recall that the \emph{exponent} of an abelian
group $G$ is $\exp(G) = \max\{\ord(g) \colon g\in G\}$, where
$\ord(g)$ denotes the order of $g \in G$. The \emph{$2$-rank} of $G$,
denoted $\rk_2(G)$, is the rank of the Sylow $2$-subgroup of $G$. The
group $G$ is called \emph{homocyclic} if it can be written as a direct
sum of pairwise isomorphic cyclic groups.

Suppose that $G$ is of type $(m_1\longc m_r)$. Then we have $\rk(G)=r$ and,
unless $G$ is trivial, $\exp(G)=m_r$. Moreover, $\rk_2(G)$ is the number of
indices $i\in[1,r]$ for which $m_i$ is even, and $\rk_2(2\ast G)$ is the
number of indices $i\in[1,r]$ for which $m_i$ is divisible by four. (Here $2
\ast G$ denotes the set $\{ 2g \colon g \in G \}$; see our list of
definitions at the end of Section~\ref{s:intro}.) Clearly, $G$ is homocyclic
if and only if $m_1 \longe m_r$.

\begin{theorem}\label{t:thm27}
  Let $G$ be a finite abelian group with $\diam(G)\ge 3$.
\begin{itemize}
\item[(i)] Suppose that $|G|$ is odd, and let $n:=\exp(G)$. Then
  $$ \t_3(G) = \s_3(G) =
    \begin{cases}
      \frac{n-1}{2n}\,|G| - 1 & \text{if $n \equiv 1\mmod 4$,} \\
      \frac{n-1}{2n}\,|G|     & \text{if $n \equiv 3\mmod 4$.}
    \end{cases}
  $$
\item[(ii)] Suppose that $|G|$ is even. Then
\begin{align*}
  \t_3(G) &= \begin{cases}
      (|G|-\sqrt{|G|})/2 & \text{if $G$ is homocyclic of exponent $4$,} \\
      |G|/2 -1 & \text{if $\rk_2(G) = \rk_2(2\ast G)$ and $\exp(G)>4$,} \\
      |G|/2    & \text{if $\rk_2(G) > \rk_2(2 \ast G)$,}
             \end{cases} \\
  \s_3(G) &= \begin{cases}
      |G|/2 -1 & \text{if $G$ is a cyclic $2$-group,} \\
      |G|/2    & \text{otherwise.}
             \end{cases}
\end{align*}
\end{itemize}
\end{theorem}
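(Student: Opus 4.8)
Throughout I would replace $A$ by its symmetric closure: since $\<A\>_2$ depends only on $A^\pm$ and any set maximal subject to $\<A\>_2\neq G$ must already equal $A^\pm$, I may assume $A=A^\pm$, so that $0\in A=-A$ and $\<A\>_2=A+A$. A $3$-maximal set is then a symmetric $A$ that is maximal subject to $A+A\neq G$, and by the discussion preceding Example~\ref{p:exam25} such an aperiodic $A$ automatically generates $G$. The reformulation I would build on is that, because $A=-A$,
$$
A+A\neq G\quad\Longleftrightarrow\quad A\cap(A+b)=\varnothing\ \text{ for some }b\in G;
$$
that is, $A$ is disjoint from one of its own translates, so in particular $2|A|\le|G|$.

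For the upper bounds I would fix such a $b\notin A+A$, put $d:=\ord(b)$, and partition $G$ into cosets of $\<b\>\cong\Z_d$. Since translation by $b$ preserves each coset, inside every coset the trace of $A$ is a subset $S$ of $\Z_d$ with $S\cap(S+1)=\varnothing$, i.e.\ an independent set in the cycle $C_d$; hence $|S|\le\lfloor d/2\rfloor$. Summing over the $|G|/d$ cosets gives
$$
|A|\ \le\ \frac{|G|}{d}\,\lfloor d/2\rfloor
   \ =\ \begin{cases}\dfrac{(d-1)|G|}{2d}&\text{if $d$ is odd,}\\[1ex]\dfrac{|G|}{2}&\text{if $d$ is even.}\end{cases}
$$
As $d=\ord(b)$ divides $\exp(G)=n$ and $(d-1)/(2d)$ increases with $d$, the odd case yields $|A|\le\frac{n-1}{2n}|G|$; and since $|G|$ is odd the symmetric set $A$ has odd cardinality, so when $\frac{n-1}{2n}|G|$ happens to be even---which one checks is exactly the case $n\equiv1\pmod 4$---the bound improves to $\frac{n-1}{2n}|G|-1$. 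The matching aperiodic sets I would construct by choosing, inside each $\<b\>$-coset, a maximal ($(n-1)/2$-element) independent set of $C_n$ for an element $b$ of order $n$, varying the choices across cosets so as to destroy the period while keeping $b\notin A+A$, and deleting a single point when parity demands; the value $\exp(G)=n$ enters precisely because only a cyclic quotient of order $n$ makes the per-coset bound tight. This settles part~(i).

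In the even case the same partition gives only $|A|\le|G|/2$, and the whole content of part~(ii) is to decide when this is attained by an \emph{aperiodic} symmetric set. Equality forces $d$ even and forces the trace of $A$ on every $\<b\>$-coset to be one of the two maximal independent sets of $C_d$ (the ``even'' or the ``odd'' positions); this rigidity, confronted with $A=-A$ and $\pi(A)=\{0\}$, is what produces the trichotomy. When $\rk_2(G)>\rk_2(2\ast G)$ there is an involution $g$ outside $2\ast G$, and one can tile $G=A\oplus\{0,g\}$ by a symmetric aperiodic transversal $A$ of $\<g\>$; then $g\notin A-A=A+A$ and $|A|=|G|/2$, giving $\t_3(G)=|G|/2$. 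When $\rk_2(G)=\rk_2(2\ast G)$ and $\exp(G)>4$ every involution lies in $2\ast G$, the rigid choices cannot be made simultaneously symmetric and aperiodic, and one provably loses a single element, giving $|G|/2-1$. The remaining case, $G$ homocyclic of exponent $4$, is genuinely exceptional: here $2\ast G$ is the whole $2$-torsion subgroup, of order $\sqrt{|G|}$, the tiling degenerates, and both the sharp upper bound and the extremal set come from a construction of coding-theoretic nature related to the caps and quasi-perfect codes of Section~\ref{s:context}, accounting for the value $(|G|-\sqrt{|G|})/2$. I expect this exponent-$4$ case, together with the rigidity argument needed to rule out aperiodic sets of size $|G|/2$ when $\rk_2(G)=\rk_2(2\ast G)$, to be the main obstacle.

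Finally I would read off $\s_3(G)$ from $\t_3$ through Lemma~\ref{l:lem24}, which gives $\s_3(G)=\max\{|H|\,\t_3(G/H)\colon H\lneqq G\}$. For odd $|G|$, writing $n':=\exp(G/H)\mid n$ and substituting the formula from part~(i), one has $|H|\,\t_3(G/H)=\frac{n'-1}{2n'}|G|-|H|\,\eps'$ with $\eps'\in\{0,1\}$; since $\frac{n'-1}{2n'}$ grows with $n'$, a short check shows the maximum occurs at $H=\{0\}$, whence $\s_3(G)=\t_3(G)$ as stated. For even $|G|$ the optimisation yields the generic value $|G|/2$ via a quotient $G/H$ satisfying $\rk_2(G/H)>\rk_2(2\ast(G/H))$ (which always exists unless $G$ is a cyclic $2$-group), while for a cyclic $2$-group every proper quotient is again a cyclic $2$-group carrying the forced deficit, the factor $|H|$ cannot compensate, and the maximum $|G|/2-1$ is attained at $H=\{0\}$.
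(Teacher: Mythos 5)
Your upper bounds and quotient reductions are essentially the paper's own arguments: the per-coset independent-set count is Lemma~\ref{l:lem82}, the parity refinement for odd $|G|$ is Lemma~\ref{l:lem61}, your symmetric aperiodic transversal for an involution outside $2\ast G$ is precisely the set $A=\{0\}\cup(K+h)\stm\{h\}$ of Lemma~\ref{l:lem83}\,(ii), and your treatment of $\s_3$ via Lemma~\ref{l:lem24} (a quotient with $\rk_2>\rk_2(2\ast{}\cdot{})$ and diameter at least $3$ exists unless $G$ is a cyclic $2$-group) is how the paper proves Lemma~\ref{l:lem810}. I also note that your ``rigidity'' idea can genuinely be completed into a Kneser-free proof of the upper bound of Lemma~\ref{l:lem87}: if $A=-A$, $|A|=|G|/2$ and $b\notin A+A$ has order $d$, then $d$ must be even, and for $d\ge 4$ the trace of $A$ on each $\<b\>$-coset is one of the two alternating classes, hence a coset of $\<2b\>$, so $\<2b\>\seq\pi(A)$ and aperiodicity forces $d=2$; then $G=A\sqcup(A+b)$ with $A$ symmetric forces $b\notin 2\ast G$ (if $b=2x$ then $-x=x+b$, and symmetry contradicts transversality), contradicting $\rk_2(G)=\rk_2(2\ast G)$. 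This is more elementary than the paper's route, which uses Kneser's theorem to get $\<A\>_2=G\stm\{h\}$ with $2h=0$ and then invokes Lemma~\ref{l:lem86}.

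Nevertheless the proposal has genuine gaps, which you partly acknowledge but do not close. First, the homocyclic exponent-$4$ case is simply absent: there is no coding-theoretic result to import (the connections of Section~\ref{s:context} concern $\Z_2^r$ and $\rho=4$), and your ``lose a single element'' heuristic cannot reach the truth, since $|G|/2-1-(|G|-\sqrt{|G|})/2=(\sqrt{|G|}-2)/2$, i.e.\ one loses on the order of $\sqrt{|G|}$ elements. The paper needs two new ideas here: Lemma~\ref{l:lem88}, showing that any $3$-maximal $A$ with $(2\ast G)\cup\<A\>_2\neq G$ satisfies $2h\in\pi(A)\stm\{0\}$ for $h$ outside that union (so that for aperiodic $3$-maximal $A$ one has $G\stm\<A\>_2\seq 2\ast G$, and Lemma~\ref{l:lem86} gives $|A|\le(|G|-2^{\rk_2(G)})/2$), and the rank-induction construction of Lemma~\ref{l:lem89} for the matching lower bound. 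Second, in the case $\rk_2(G)=\rk_2(2\ast G)$, $\exp(G)>4$ you give no construction attaining $|G|/2-1$: since this size is below the boxing-principle threshold $|G|/2$, $3$-maximality of a candidate set is not automatic and must be verified by hand, as in Lemma~\ref{l:lem83}\,(iii) for $A=\{0\}\cup(K+h)\stm\{h,-h\}$, where adding any $g$ forces $h-g=-h=h+g$, hence $4h=2g=0$, a contradiction; without an aperiodic $3$-maximal set of this size, the value $\t_3(G)=|G|/2-1$ (and with it $\s_3$ for cyclic $2$-groups) is not established. The same incompleteness affects part~(i): ``varying the choices across cosets'' must respect the coupling that symmetry and $b\notin A+A$ impose between the traces on $g+\<b\>$ and $-g+\<b\>$, which the paper's Lemma~\ref{l:lem84} resolves by placing on the two extreme layers a set $B\seq H$ with $0\notin 2B$ supplied by Lemma~\ref{l:lem83}\,(i). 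In short, the skeleton and several key ideas are right, but the hardest components of the theorem --- the exponent-$4$ case and the $3$-maximality verifications behind the lower bounds --- are missing.
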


The proof of this theorem already requires a fair amount of work.
Probably, it will be significantly more difficult to find explicit
formulae for $\t_4(G)$ and $\s_4(G)$. Some general estimates are given
later in this section.

Our next result is a consequence of Theorem~\ref{t:thm22}; see also
Corollary~\ref{c:cor23}.

\begin{corollary}\label{c:cor28}
  Let $G$ be a finite abelian group, and suppose that $\rho :=
  \diam(G)\ge 2$. Then
  $$
  \t_\rho(G) = \s_\rho(G) = 1 + 2\rk(G) - \nu_2(G) + 2 \lfloor
  \nu_3(G)/2 \rfloor.
  $$
\end{corollary}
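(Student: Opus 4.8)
The plan is to deduce Corollary~\ref{c:cor28} directly from Theorem~\ref{t:thm22}, exploiting the fact that $\rho=\diam(G)$ is the top of the range, where the $\rho$-maximal generating sets are exactly the ``diameter-maximal'' sets already classified. First I would record the chain of inequalities $\t_\rho(G)\le\s_\rho(G)\le 1+2\rk(G)-\nu_2(G)+2\lfloor\nu_3(G)/2\rfloor$: the left inequality is immediate from the definitions (every aperiodic $\rho$-maximal generating set is in particular a $\rho$-maximal generating set), and the right inequality is exactly the upper bound of Corollary~\ref{c:cor23} applied to any generating set $A$ realizing $\s_\rho(G)$, since such an $A$ has $\diam_A(G)\ge\rho=\diam(G)$, hence $\diam_A(G)=\diam(G)$. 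Thus it suffices to exhibit a single \emph{aperiodic} generating set $A$ that is $\rho$-maximal and attains the claimed cardinality, which will force equality throughout.

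Next I would show that the extremal set can be taken aperiodic, so that the same witness computes both $\t_\rho(G)$ and $\s_\rho(G)$. The key point is that when $\rho=\diam(G)$, Theorem~\ref{t:thm22} tells us that any $A$ with $\diam_A(G)=\diam(G)$ satisfies $B^\pm\seq A^\pm\seq(B\cup\{b_{2i-1}+b_{2i}\colon i\in[1,\nu_3(G)/2]\})^\pm$ for some standard generating set $B$. I would take $A^\pm$ to be the \emph{largest} admissible set, namely the right-hand side, and verify three things: that it is a generating set (it contains the standard generating set $B$), that $\diam_A(G)=\diam(G)$ (so $A$ is $\rho$-attaining, and since $\rho=\diam(G)$ no strictly larger set can still have diameter $\ge\rho$, giving $\rho$-maximality), and that $A$ is aperiodic. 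Aperiodicity is where I would focus the care: a nonzero period $g\in\pi(A^\pm)$ would translate $A^\pm$ to itself, but the structure of $A^\pm$ as $B^\pm$ enlarged by the finitely many ``diagonal'' elements $b_{2i-1}+b_{2i}$ in the order-$3$ components is rigid, and I would argue that no nonzero translation preserves it by examining the coordinates in each $\Z_{m_i}$ summand.

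The main computational step is then to count $|A|$ for this maximal $A$ and match it with the corollary's formula. Writing $A^\pm=(-A)\cup\{0\}\cup A$, I need $|A|$, i.e.\ half the number of nonzero elements of $A^\pm$. The set $A^\pm$ consists of $\{0\}$, the symmetric closure of the $r$ standard generators, and the symmetric closure of the $\lfloor\nu_3(G)/2\rfloor$ diagonal elements. A generator $b_i$ of order $m_i$ contributes the pair $\{b_i,-b_i\}$, i.e.\ two elements, \emph{unless} $m_i=2$ (so $-b_i=b_i$), in which case it contributes only one; this accounts for the term $2\rk(G)-\nu_2(G)$ after summing over all generators. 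Each diagonal element $b_{2i-1}+b_{2i}$ in an order-$3$ block has order $3$, hence contributes a genuine pair of two elements, yielding $2\lfloor\nu_3(G)/2\rfloor$. Adding the single element $0$ gives $|A|=1+2\rk(G)-\nu_2(G)+2\lfloor\nu_3(G)/2\rfloor$, exactly as required. I would finish by combining this lower bound on $\t_\rho(G)$ with the upper bound on $\s_\rho(G)$ established at the outset, forcing all three quantities to coincide.

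I expect the only real obstacle to be the verification that the maximal admissible $A$ is genuinely aperiodic; the counting is bookkeeping once the order-$2$ and order-$3$ components are handled separately, and the inequality framework is essentially a packaging of Corollary~\ref{c:cor23} and the definition of $\t_\rho$ versus $\s_\rho$.
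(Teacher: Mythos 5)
Your framework coincides with the paper's own proof: the upper bound $\s_\rho(G)\le 1+2\rk(G)-\nu_2(G)+2\lfloor\nu_3(G)/2\rfloor$ is read off from Corollary~\ref{c:cor23} (any generating set $A$ with $\diam_A(G)\ge\rho=\diam(G)$ attains the absolute diameter), and the matching lower bound on $\t_\rho(G)$ is furnished by the single witness $A=\big(B\cup\{b_{2i-1}+b_{2i}\colon i\in[1,\nu_3(G)/2]\}\big)^\pm$ built from a standard generating set $B$. Theorem~\ref{t:thm22} gives $\diam_A(G)=\diam(G)$; your count of $|A|$ is correct (order-$2$ generators are self-inverse, all other elements come in genuine $\pm$ pairs, and the pieces are disjoint because $\nu_2(G)$ and $\nu_3(G)$ cannot both be non-zero); and $\rho$-maximality holds because a proper superset with diameter $\ge\rho$ would contradict the Corollary~\ref{c:cor23} bound --- which is the justification your parenthetical ``no strictly larger set can still have diameter $\ge\rho$'' actually needs. (Your aside that $|A|$ is ``half the number of nonzero elements of $A^\pm$'' is a slip, but a harmless one: maximality forces $A=A^\pm$, and what you in fact count is $|A^\pm|$.)

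The genuine gap is the step you defer: aperiodicity of $A$ is asserted by appeal to ``rigidity'' and coordinate inspection, but no argument is given, and this verification is the entire substance of the paper's proof of the corollary. It is also the one place where naive coordinate rigidity needs care. In the exceptional case where $G$ is homocyclic of exponent $3$ of even rank $r$, the diagonal elements create coincidences under translation: for instance $(A+b_r)\cap A$ contains $b_{r-1}+b_r$ in addition to $\{0,b_r,-b_r\}$, so the clean intersection argument the paper uses in every other case with $r\ge 2$ (namely $\pi(A)\seq(A+b_1)\cap(A+b_r)\cap A=\{0\}$) breaks down. The paper disposes of this case by a counting device instead: there $|A|=1+2r+r\equiv 1\mmod 3$, while $\pi(A)$ is a subgroup of the $3$-group $G$ and $A$ is a union of $\pi(A)$-cosets, so $|\pi(A)|$ divides $\gcd(|A|,|G|)=1$. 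A coordinatewise check can also be pushed through in this case --- every candidate period $g$ lies in $A$ since $0\in A$, and each of the finitely many candidates is excluded; e.g.\ $g=b_r$ would force $b_{r-1}+2b_r=b_{r-1}-b_r\in A$, which fails --- but as written your proposal leaves precisely this, the only non-routine point, unproved.
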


We now turn our attention towards cyclic groups. Notice that
$\diam(\Z_m)=\lfl m/2 \rfl$ for all $m\in\N$, by Theorem~\ref{t:thm21}.

\begin{theorem}\label{t:thm29}
  Let $m \in \N$ and $\rho \in [2, m/2]$.  Then
  $$
  \t_\rho(\Z_m) = 2 \lfl \frac{m-2}{2(\rho-1)} \rfl +1
  $$
  and
  $$
  \s_\rho(\Z_m) = \max \lfr \frac md \lpr 2 \lfl
  \frac{d-2}{2(\rho-1)} \rfl +1 \rpr \colon d\mid m,\ d\ge 2 \rho
  \rfr.
  $$
\end{theorem}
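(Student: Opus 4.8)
The plan is to reduce the $\s_\rho$ statement to the $\t_\rho$ statement and then to compute $\t_\rho(\Z_m)$ directly. Write $k:=\lfl (m-2)/(2(\rho-1))\rfl$, so that the asserted value of $\t_\rho(\Z_m)$ is $2k+1$. For the reduction, Lemma~\ref{l:lem24} gives $\s_\rho(\Z_m)=\max\{|H|\cdot\t_\rho(\Z_m/H)\colon H\lneqq\Z_m\}$; the subgroups of $\Z_m$ are indexed by the divisors of $m$, a subgroup of order $m/d$ yielding quotient $\Z_d$, and $\t_\rho(\Z_d)=0$ unless $\rho\le\diam(\Z_d)=\lfl d/2\rfl$, i.e.\ unless $d\ge 2\rho$. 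Substituting the formula for $\t_\rho(\Z_d)$ then produces exactly the stated expression for $\s_\rho(\Z_m)$, so everything hinges on $\t_\rho(\Z_m)$. A preliminary observation used throughout: if $A$ is $\rho$-maximal then $A=A^\pm$, since $\<A\>_{\rho-1}=\<A^\pm\>_{\rho-1}$ forces $A^\pm$ to be an admissible set containing $A$; in particular $A$ is symmetric with $0\in A$, whence $\<A\>_{\rho-1}=(\rho-1)A$.

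For the upper bound $\t_\rho(\Z_m)\le 2k+1$, let $A$ be an aperiodic $\rho$-maximal subset and set $B:=\<A\>_{\rho-1}=(\rho-1)A\ne\Z_m$. The crucial step, and the one I expect to be the main obstacle, is to show that $B$ itself is aperiodic. Suppose not, let $\pi:=\pi(B)\ne\{0\}$ with quotient map $\phi\colon\Z_m\to\Z_m/\pi$, so $B=\phi^{-1}(\phi(B))$. If some fibre $\phi^{-1}(\bar a)$ with $\bar a\in\phi(A)$ failed to lie in $A$, pick $x$ in it with $x\notin A$; by $\rho$-maximality $\<A\cup\{x\}\>_{\rho-1}=\Z_m$, and applying $\phi$ gives $\Z_m/\pi=\<\phi(A)\cup\{\phi(x)\}\>_{\rho-1}=\<\phi(A)\>_{\rho-1}=\phi(B)$ (as $\phi(x)\in\phi(A)$), forcing $B=\phi^{-1}(\phi(B))=\Z_m$, a contradiction. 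Hence every such fibre lies in $A$, i.e.\ $A=\phi^{-1}(\phi(A))$ is $\pi$-periodic, contradicting aperiodicity of $A$. With $B$ aperiodic, Kneser's theorem applied to the $(\rho-1)$-fold sumset gives $|B|\ge(\rho-1)|A|-(\rho-2)$, and with $|B|\le m-1$ this yields $|A|\le (m-2)/(\rho-1)+1=2(k+\theta)+1$ with $\theta\in[0,1)$. Since $A$ is symmetric with $0\in A$, the cardinality $|A|$ is odd unless $m$ is even and $m/2\in A$; in the odd case $|A|<2k+3$ forces $|A|\le 2k+1$.

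It remains to exclude $|A|=2k+2$ in the exceptional case $m$ even, $m/2\in A$. Then $m/2\in A\seq B$, and the symmetric set $\Z_m\stm B$ contains neither $0$ nor $m/2$, so has even cardinality, giving the sharper bound $|B|\le m-2$. This is compatible with Kneser only in a narrow range of $m$, where I expect to finish by a rigidity argument: writing $A'=A\stm\{m/2\}$ (symmetric, $|A'|=2k+1$), one has $B=(\rho-1)A'\cup\bigl(m/2+(\rho-2)A'\bigr)$, and the second set, an arc of radius about $(\rho-2)k$ centred at $m/2$, covers the symmetric ``hole'' of $(\rho-1)A'$ about $m/2$, forcing $B=\Z_m$, a contradiction. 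Making this covering precise — equivalently, invoking the equality case of Kneser's theorem, which would make $A'$ an arithmetic progression and hence of odd length — is the fiddliest part, but it is elementary.

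For the lower bound I use the arc $A_0:=\{-k\longc k\}$ of consecutive residues, which is aperiodic (a proper arc of common difference $1$ has trivial period) and satisfies $\<A_0\>_{\rho-1}=\{-(\rho-1)k\longc(\rho-1)k\}\ne\Z_m$ because $2(\rho-1)k\le m-2$; thus $A_0$ is a set of size $2k+1$ with $\diam_{A_0}(\Z_m)\ge\rho$. I claim $A_0$ is already $\rho$-maximal. Any set $A'\supseteq A_0$ with $\<A'\>_{\rho-1}\ne\Z_m$ must be aperiodic: if $\pi(A')\ne\{0\}$ then $A'\supseteq A_0+\pi(A')$, and for a nontrivial subgroup of index $e=|\Z_m/\pi(A')|$ (with $2\le e\le m/2$) one checks, using $2(\rho-1)k\ge e-1$, that the image of $A_0$ already fills the quotient after $\rho-1$ steps, so $\<A_0+\pi(A')\>_{\rho-1}=\Z_m$, contradicting the assumption on $A'$. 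By the upper bound every aperiodic such $A'$ has $|A'|\le 2k+1=|A_0|$, hence $A'=A_0$; so $A_0$ admits no strictly larger admissible superset and is $\rho$-maximal. This gives $\t_\rho(\Z_m)\ge 2k+1$, matching the upper bound. To summarise the difficulty: the heart of the matter is the upper bound, and specifically the fact that $(\rho-1)A$ is forced to be aperiodic — without it Kneser's inequality is too weak to beat the arc, and it is exactly $\rho$-maximality, through the fibre argument above, that supplies this.
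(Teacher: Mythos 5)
Your overall architecture coincides with the paper's: the reduction of $\s_\rho(\Z_m)$ to $\t_\rho(\Z_m)$ via Lemma~\ref{l:lem24}, the fibre argument showing that $\rho$-maximality plus aperiodicity of $A$ forces $\<A\>_{\rho-1}$ to be aperiodic (this is precisely the paper's Lemma~\ref{l:lem62}), the one-shot Kneser estimate, and the lower bound via the arc $\{-k\longc k\}$, whose admissible supersets are shown to be aperiodic because the arc $\<A_0\>_{\rho-1}$ is too long to be swallowed by a proper periodic set --- all of this is exactly how the paper proceeds (it quotes Proposition~\ref{p:prop211} for the upper bound). The genuine gap is the step you yourself flag as fiddly: excluding $|A|=2k+2$ when $m$ is even and $m/2\in A$. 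Writing $j:=(m-2)\bmod 2(\rho-1)$, your single application of Kneser gives $|\<A\>_{\rho-1}|\ge 2(\rho-1)k+\rho=(m-2)-j+\rho$, while your parity observation gives $|\<A\>_{\rho-1}|\le m-2$; these are compatible whenever $j\ge\rho$, so for $\rho\ge 3$ there is no contradiction in general. And precisely because there is slack, equality in Kneser's theorem is \emph{not} forced, so there is nothing to which ``the equality case of Kneser's theorem'' could be applied; even if equality did hold, the structure of extremal and near-extremal sets in $\Z_m$ is governed by Vosper/Kemperman-type theorems (progressions are not the only extremal configurations, and near-equality would require Freiman-type stability results), none of which is available in the paper or needed. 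As written, the exceptional case is unproven, and the proposed route to close it would not work without substantially heavier machinery.

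What the paper does instead (proof of Proposition~\ref{p:prop211}, case $\rk_2(G)=1$) is an iterated, parity-boosted Kneser argument, and your own tools suffice to run it. Since $A=A^\pm\ni 0$ and $|A|=2k+2$ is even, the unique element $m/2$ of order two lies in $A$, hence $0,m/2\in\<A\>_\tau$ for every $\tau\in[1,\rho-1]$; each $\<A\>_\tau$ is symmetric, so each $|\<A\>_\tau|$ is even. Moreover each $\<A\>_\tau$ is aperiodic, because any period of $\<A\>_\tau$ is a period of $\<A\>_{\rho-1}=\<A\>_\tau+(\rho-1-\tau)A^\pm$, which you have already shown to be aperiodic. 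Now apply Kneser to each single step $\<A\>_\tau=\<A\>_{\tau-1}+A^\pm$: it gives $|\<A\>_\tau|\ge|\<A\>_{\tau-1}|+|A|-1$, and since the left-hand side is even while the right-hand side is odd, in fact $|\<A\>_\tau|\ge|\<A\>_{\tau-1}|+|A|$. Iterating over $\tau=1\longc\rho-1$ yields $|\<A\>_{\rho-1}|\ge(\rho-1)|A|=2(\rho-1)(k+1)>m-2$, and as $|\<A\>_{\rho-1}|$ and $m$ are both even this forces $\<A\>_{\rho-1}=\Z_m$, the desired contradiction. The parity gain of $+1$ per step recovers exactly the deficit of $\rho-2$ that defeats your one-shot estimate. (A minor, easily repaired slip elsewhere: in the lower bound you invoke ``the upper bound'' for an arbitrary aperiodic admissible superset $A'$ of $A_0$, but your upper bound is proved only for $\rho$-maximal sets; first extend $A'$ to a $\rho$-maximal set, which is again a superset of $A_0$ and hence aperiodic by your argument, and then apply the bound.)
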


In particular, for cyclic groups of prime order this reduces to

\begin{corollary}\label{c:cor210}
Let $p\ge 5$ be a prime and suppose that $\rho \in [2,(p-1)/2]$. Then
$$
  \t_\rho(\Z_p) = \s_\rho(\Z_p) = 2 \lfl \frac{p-2}{2(\rho-1)} \rfl + 1.
$$
\end{corollary}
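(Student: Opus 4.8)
The plan is to deduce this corollary directly from Theorem~\ref{t:thm29} by specializing to $m=p$ a prime. Since $p\ge 5$ is prime, its only positive divisors are $1$ and $p$; hence in the formula for $\s_\rho(\Z_p)$ the condition $d\mid m,\ d\ge 2\rho$ forces $d=p$ (note $d=1<2\rho$ is excluded, and $d\ge 2\rho$ holds for $d=p$ precisely because $\rho\le (p-1)/2$ gives $2\rho\le p-1<p$). Thus the maximum defining $\s_\rho(\Z_p)$ ranges over the single value $d=p$, and the prefactor $m/d=p/p=1$ collapses, leaving
$$
\s_\rho(\Z_p)=2\lfl\frac{p-2}{2(\rho-1)}\rfl+1,
$$
which coincides exactly with the expression for $\t_\rho(\Z_p)$ obtained by setting $m=p$ in the first part of Theorem~\ref{t:thm29}. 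So the two invariants agree.

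The only genuine point requiring care is verifying that the hypotheses of Theorem~\ref{t:thm29} are met, i.e.\ that $\rho\in[2,m/2]$ holds with $m=p$. First I would check the upper constraint: the assumption $\rho\le (p-1)/2$ certainly implies $\rho\le p/2=m/2$, so the range requirement $\rho\in[2,p/2]$ follows from $\rho\in[2,(p-1)/2]$. The lower bound $\rho\ge 2$ is shared by both statements. I would also confirm that $\diam(\Z_p)=\lfl p/2\rfl=(p-1)/2$ by Theorem~\ref{t:thm21} (since $p$ is odd), so that the hypothesis $\rho\le (p-1)/2=\diam(\Z_p)$ is exactly the natural range $[2,\diam(\Z_p)]$ in which these invariants are interesting and nonzero.

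I do not anticipate any real obstacle here: the corollary is a purely arithmetic specialization, and the substantive content lies entirely in Theorem~\ref{t:thm29}, which I am permitted to assume. The one subtlety worth stating explicitly is the divisor analysis showing that $d=p$ is the unique admissible value in the $\s_\rho$-maximum; for a general $m$ several divisors $d$ could contribute and the quantities $\t_\rho$ and $\s_\rho$ need not coincide, but primality of $p$ removes all intermediate divisors and forces equality. This is precisely why the prime case yields the clean identity $\t_\rho(\Z_p)=\s_\rho(\Z_p)$, and it is the only place where the hypothesis $p$ prime (as opposed to $m$ arbitrary) is used.
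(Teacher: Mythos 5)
Your proposal is correct and is exactly the paper's (implicit) argument: the corollary is stated as a direct specialization of Theorem~\ref{t:thm29} to $m=p$, where primality forces $d=p$ as the only divisor satisfying $d\ge 2\rho$ (since $2\rho\le p-1<p$), collapsing the maximum in the $\s_\rho$ formula to the $\t_\rho$ value. Your verification of the hypothesis $\rho\in[2,m/2]$ and the observation that $\diam(\Z_p)=(p-1)/2$ match what the paper takes for granted.
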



\subsection{The maximum size of small diameter sets: estimates}

Let $G$ be a finite abelian group and let $\rho\in [1,\diam(G)]$. For
$\rho\ge 4$ the exact values of $\t_\rho(G)$ and $\s_\rho(G)$ are
likely to depend --- in an increasingly complicated way --- on the
algebraic structure of $G$. Nevertheless, some general estimates in
terms of $\rho$ and $|G|$ can be given. To some extent such estimates
compensate for the lack of explicit formulae as those we were able to
provide for $\rho\le 3$. Moreover, the bounds given by the following
proposition are used in the proofs of several of the results stated
above.

\begin{proposition}\label{p:prop211}
  Let $G$ be a finite abelian group, and suppose that $\rho\ge 2$.
  Then
  $$
  \t_\rho(G)\le\lfl \frac{|G|-2}{\rho-1} \rfl +1.
  $$
  Moreover, if $\rk_2(G)\le 1$ then
  $$
  \t_\rho(G)\le 2 \lfl \frac{|G|-2}{2(\rho-1)} \rfl +1.
  $$
\end{proposition}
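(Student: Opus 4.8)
The plan is to bound the size of an aperiodic $\rho$-maximal set $A$ by analyzing how its symmetric closure $A^\pm$ grows under repeated addition. Since $A$ is $\rho$-maximal we have $\<A\>_{\rho-1} \ne G$, so there exists some $g \in G \setminus \<A\>_{\rho-1}$. The key observation is that $A^\pm$ together with the sumsets $2A^\pm, \dotsc, (\rho-1)A^\pm$ cannot cover $G$; in fact $l_A(g) \ge \rho$. My first step would be to fix such a witness $g$ and consider the translates $g - kA^\pm$ (equivalently the ``rays'' towards $g$) for $k = 0, 1, \dotsc$, and to argue that the elements $0, a, 2a, \dotsc$ obtained along a single generator must be spread out: if $a \in A$ then the multiples $a, 2a, \dotsc$ reach far, and consecutive elements of $A^\pm$ along the path from $0$ to $g$ are separated by length at least one each, which over $\rho-1$ steps forces a bound of roughly $(|G|-2)/(\rho-1)$ on how many ``new'' elements $A$ can contribute.

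More concretely, I would set up a counting argument based on the length function $l_A$. Consider the element $g$ of maximal length (or at least length $\ge \rho$), and look at the sequence of sets $\<A\>_0 \subsetneqq \<A\>_1 \subsetneqq \dotsb$; each strict inclusion up to level $\rho-1$ adds at least one new element, and the way $A^\pm$ generates these forces $|A^\pm| = 2|A|+1$ to satisfy a linear relation with the number of levels. The cleanest route is probably to pick $a \in A$ and study the coset structure or the arithmetic progression $\{0, a, 2a, \dotsc\}$ inside $\<a\>$: because $A$ is aperiodic, $A$ cannot be a full union of cosets of $\<a\>$, which provides the ``$-2$'' savings in the numerator. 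I would translate the maximality condition $\<A\>_{\rho-1} \ne G$ into the statement that some fixed element is missed, then count the elements of $A^\pm$ as lattice points that must fit into the complement, yielding $2|A| \le \lfloor (|G|-2)/(\rho-1) \rfloor \cdot (\text{something})$ and rearranging to the stated bound $\t_\rho(G) \le \lfloor (|G|-2)/(\rho-1) \rfloor + 1$.

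For the refined bound under the hypothesis $\rk_2(G) \le 1$, the extra factor of $2$ in the floor comes from a symmetry argument: when $G$ has at most one even invariant factor, the symmetric closure $A^\pm = (-A) \cup \{0\} \cup A$ has a genuine involution $x \mapsto -x$ with a controlled fixed-point set (only $0$ and possibly one element of order $2$), so the nonzero part of $A^\pm$ splits into pairs $\{a, -a\}$. This evenness lets me replace a bound on $|A^\pm|$ by a bound on $2|A|$ that respects the pairing, and I would push the factor $2$ inside the floor to get $2\lfloor (|G|-2)/(2(\rho-1)) \rfloor + 1$, which is never weaker and is strictly sharper when the relevant quotient is not an integer. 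The main obstacle I anticipate is making the counting argument rigorous in the non-cyclic case: along a single generator $a$ one only controls growth inside $\<a\>$, so I expect to need either a reduction to a well-chosen cyclic quotient (using the lifting and image constructions described before Lemma~\ref{l:lem24}) or a direct argument that the ``longest'' element $g$ can be reached by a path whose increments lie in a single direction, so that the one-dimensional count applies verbatim. Handling the rounding carefully — in particular justifying the $-2$ and ensuring aperiodicity is genuinely used to prevent $A$ from tiling cosets — is where the delicate part of the proof will lie.
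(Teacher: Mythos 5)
Your sketch is missing the one tool that makes this proposition work: a sumset-growth inequality for iterated sums, i.e.\ Kneser's theorem. The observation that each inclusion $\<A\>_\tau \snqq \<A\>_{\tau+1}$ ``adds at least one new element'' yields only $|\<A\>_{\rho-1}| \ge \rho-1$, which gives no bound on $|A|$ at all; and tracking the multiples $a, 2a, \dotsc$ of a single generator only controls what happens inside the cyclic subgroup $\<a\>$, which is why you yourself flag the non-cyclic case as an obstacle. What is actually needed is that each step adds at least $|A|-1$ new elements, so that $|\<A\>_{\rho-1}| \ge (\rho-1)|A|-(\rho-2)$. This is exactly Corollary~\ref{c:cor44} (Kneser's theorem) applied to the sets $\<A\>_\tau$, whose aperiodicity is supplied by Lemma~\ref{l:lem62}: for a $\rho$-maximal set, $\pi(\<A\>_\tau)=\pi(A)=\{0\}$. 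Combining this with $|\<A\>_{\rho-1}|\le |G|-1$ gives the first bound in one line. No elementary ``lattice point'' or ``one-dimensional'' count will substitute for this inequality --- linear growth of iterated sumsets in a general abelian group is precisely the content of Kneser's theorem, and your proposal contains no replacement for it. (Also, note that a $\rho$-maximal set satisfies $A=A^\pm$, so your relation $|A^\pm|=2|A|+1$ cannot hold.)

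Your parity idea for the refined bound is correct, but only in the case $\rk_2(G)=0$: there $|A|=|A^\pm|$ is odd (Lemma~\ref{l:lem61}), so the first bound can be lowered to the largest odd integer not exceeding it, which is $2\lfl (|G|-2)/(2(\rho-1))\rfl+1$. However, the hypothesis allows $\rk_2(G)=1$, and there your pairing argument collapses: if the unique involution $g$ of $G$ lies in $A$, then $|A|$ may well be even and nothing is gained. The paper handles this case with a finer argument that your sketch does not anticipate: writing $k:=\lfl (|G|-2)/(2(\rho-1))\rfl$ and assuming $|A|=2k+2$, evenness of $|A|$ forces $g\in A$, hence $g\in\<A\>_\tau$ and every $|\<A\>_\tau|$ is even; comparing parities in the Kneser inequality $|\<A\>_\tau|\ge|\<A\>_{\tau-1}|+|A|-1$ upgrades it to $|\<A\>_\tau|\ge|\<A\>_{\tau-1}|+|A|$, whence $|\<A\>_{\rho-1}|\ge(\rho-1)|A|>|G|-2$, and parity again forces $\<A\>_{\rho-1}=G$, contradicting $\rho$-maximality. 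So even granting Kneser's theorem, your proposal proves the second inequality only for odd-order groups, not under the stated hypothesis $\rk_2(G)\le 1$.
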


Our two final results provide further bounds.

\begin{theorem}\label{t:thm212}
Let $G$ be a finite abelian group, and suppose that $\rho\ge 4$. Then
$\s_\rho(G)\le\frac32\,\rho^{-1}|G|$, and equality holds if and only if $G$
has a subgroup $H$ such that $G/H$ is cyclic of order $2\rho$. Indeed, the
following assertions are equivalent:
\begin{itemize}
\item[(i)] $A$ is a generating set for $G$ satisfying
  $\diam_A(G)\ge\rho$ and $|A|=\frac32\,\rho^{-1}|G|$;
\item[(ii)] $A=(-g+H)\cup H\cup(g+H)$, where $H$ is a subgroup of $G$ such
  that $G/H$ is cyclic of order $2\rho$, and $g$ is an element of $G$ such
  that $g+H$ is a generator of $G/H$.
\end{itemize}
\end{theorem}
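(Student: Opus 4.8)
The plan is to combine the reduction of Lemma~\ref{l:lem24} with the crude estimate of Proposition~\ref{p:prop211}, and then to read off the equality cases; the hypothesis $\rho\ge4$ will enter decisively at the very end.

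I would first dispose of the bound. If $\rho>\diam(G)$ then $\s_\rho(G)=0$ and there is nothing to prove, so I assume $\rho\in[2,\diam(G)]$ and invoke Lemma~\ref{l:lem24}, reducing the task to bounding $|H|\,\t_\rho(G/H)$ over $H\lneqq G$. Writing $Q:=G/H$ and $q:=|Q|$, I note that $\t_\rho(Q)\ne0$ forces $\diam(Q)\ge\rho$, and that Theorem~\ref{t:thm21} together with the elementary inequalities $2\lfl m_i/2\rfl\le m_i$ and $\prod_i m_i\ge\sum_i m_i$ (valid for any type with all parts $\ge2$) yields $2\diam(Q)\le|Q|$, hence $q\ge2\rho$. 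Proposition~\ref{p:prop211} then gives $\t_\rho(Q)\le\lfl(q-2)/(\rho-1)\rfl+1\le(q+\rho-3)/(\rho-1)$, so that
$$
|H|\,\t_\rho(Q)\le\frac{|G|}{q}\cdot\frac{q+\rho-3}{\rho-1}.
$$
A short manipulation shows that $\frac{q+\rho-3}{q(\rho-1)}\le\frac{3}{2\rho}$ is equivalent to $(\rho-3)(q-2\rho)\ge0$; since $\rho\ge4$ the factor $\rho-3$ is positive, so this follows from $q\ge2\rho$. This establishes $\s_\rho(G)\le\frac{3}{2\rho}|G|$.

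Next I would extract the equality cases. Equality in the chain above forces, for the witnessing subgroup $H$, that $q=2\rho$ (the only tight instance of $(\rho-3)(q-2\rho)\ge0$ once $\rho>3$) and hence $\t_\rho(Q)=3$. Then $\diam(Q)\ge\rho=|Q|/2$, while always $\diam(Q)\le|Q|/2$; so $\diam(Q)=|Q|/2$, and tracing the inequalities back to equalities forces the type of $Q$ to satisfy $\sum_i m_i=\prod_i m_i$ with every $m_i$ even. For parts $\ge2$ this holds only for a single part (the alternative $\Z_2\oplus\Z_2$ is excluded since $|Q|=2\rho\ge8$), so $G/H\cong\Z_{2\rho}$. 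For the converse I would use Theorem~\ref{t:thm29} to compute $\t_\rho(\Z_{2\rho})=2\lfl\frac{2\rho-2}{2(\rho-1)}\rfl+1=3$, whence Lemma~\ref{l:lem24} gives $\s_\rho(G)\ge3|H|=\frac{3}{2\rho}|G|$ and equality holds. The same computation handles (ii)$\Rightarrow$(i): the set $A=(-g+H)\cup H\cup(g+H)$ is the full preimage of $\oA:=\{-\bar g,0,\bar g\}$ under $\phi\colon G\to G/H$, where $\bar g=g+H$ generates $\Z_{2\rho}$, so by the lifting correspondence $A$ generates $G$ with $\diam_A(G)=\diam_{\oA}(\Z_{2\rho})=\rho$ and $|A|=3|H|=\frac{3}{2\rho}|G|$.

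For (i)$\Rightarrow$(ii) I would first observe that a maximizer satisfies $A=A^\pm$, since otherwise $A^\pm$ is a strictly larger generating set of the same (hence $\ge\rho$) diameter. Being of maximal size, $A$ is $\rho$-maximal; putting $H:=\pi(A)$, the image $\oA:=\phi(A)$ is an aperiodic $\rho$-maximal generating set of $G/H$ with $\oA=\oA^\pm$ and $|A|=|H|\,|\oA|$. The squeeze $|H|\,|\oA|=\frac{3}{2\rho}|G|=\s_\rho(G)\ge|H|\,\t_\rho(G/H)\ge|H|\,|\oA|$ forces $|\oA|=\t_\rho(G/H)$, returning me to the equality case, so $G/H\cong\Z_{2\rho}$ and $|\oA|=3$. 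Since a symmetric three-element subset of $\Z_{2\rho}$ containing $0$ must be $\{0,u,-u\}$ with $2u\ne0$, and it generates exactly when $u$ is a unit, pulling back along $\phi$ yields the description in (ii) with $g$ any element of $\phi^{-1}(u)$. The hard part, and the place where I expect to spend the most care, is precisely this equality analysis: everything rests on $\rho-3>0$ (for $\rho=3$ the estimate degenerates and Theorem~\ref{t:thm27} shows many quotients become extremal), together with the type-arithmetic certifying that $\diam(Q)=|Q|/2$ forces $Q$ cyclic once $|Q|>4$; the bound itself is comparatively mechanical.
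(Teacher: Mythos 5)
Your proof is correct, but it takes a genuinely different route from the paper's. The paper argues directly on an extremal set $A$: it sets $H:=\pi(\<A\>_{\rho-1})$, applies Kneser's theorem to $\<A\>_{\rho-1}=(\rho-1)(A^\pm+H)$ to get $|G|\ge(\rho-1)|A^\pm+H|-(\rho-3)|H|$, shows that $A^\pm+H$ must occupy at least three $H$-cosets, and extracts both the bound $|G|\ge\frac23\rho|A|$ and the structure of extremal $A$ (exactly three cosets, the two-involutions configuration ruled out, $G/H$ cyclic of order exactly $2\rho$) in one sweep. You instead make the argument modular: reduce to the aperiodic quotient invariant via Lemma~\ref{l:lem24}, bound $\t_\rho(Q)$ by the ready-made Proposition~\ref{p:prop211} (itself Kneser-based, so both routes rest on the same engine), and close with arithmetic --- $q\ge2\rho$ from Theorem~\ref{t:thm21}, the identity reducing the bound to $(\rho-3)(q-2\rho)\ge0$, the classification $\diam(Q)=|Q|/2\Rightarrow Q$ cyclic or $\Z_2\oplus\Z_2$, and $\t_\rho(\Z_{2\rho})=3$ from Theorem~\ref{t:thm29}. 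All the steps check out, including the delicate ones: the maximizer satisfies $A=A^\pm$ and is $\rho$-maximal, so the Section~\ref{s:subsec22} lifting applies with $H=\pi(A)$; the squeeze forces $|\oA|=\t_\rho(G/H)=3$; and in $\Z_{2\rho}$ a symmetric $3$-set containing $0$ is indeed $\{0,u,-u\}$ with $2u\ne0$ (a cyclic group has a unique involution). Your approach buys transparency --- it isolates exactly where $\rho\ge4$ enters (the sign of $\rho-3$) and reuses established results rather than re-running Kneser --- whereas the paper's direct approach buys more: because it controls the coset structure of $A^\pm+H$ rather than only cardinalities, the same computation generalizes immediately to near-extremal sets, which is precisely what the proof of Theorem~\ref{t:thm213} exploits and what your cardinality-only reduction through Proposition~\ref{p:prop211} would not easily deliver.
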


This theorem shows that the maximum possible cardinality of a
generating set $A$ with $\diam_A(G)\ge\rho$ is $\frac32\,\rho^{-1}|G|$
and it describes the structure of all $A$ with precisely this
cardinality. Our next result goes beyond this, establishing the
structure of those generating sets $A$ with $\diam_A(G)\ge\rho$ and
$|A^\pm|>\frac4{3\rho-1}\,|G|$.

\begin{theorem}\label{t:thm213}
Let $G$ be a finite abelian group, and suppose that $\rho\ge 4$. If $A$ is a
generating set for $G$ such that $\diam_A(G)\ge\rho$ and
$|A^\pm|>4|G|/(3\rho-1)$, then there exist $H\le G$ and $g\in G$ satisfying
the following conditions:
\begin{itemize}
\item[(i)]  $A\seq (-g+H)\cup H\cup(g+H)$, and
  $|A^\pm|>\big(3-\frac{\rho-3}{3\rho-1}\big)|H|$;
\item[(ii)] $G/H$ is cyclic of order $2\rho \le |G/H| \le
    \frac94\,\rho-1$, and $g+H$ generates $G/H$.
\end{itemize}
\end{theorem}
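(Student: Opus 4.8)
The plan is to bootstrap from Theorem~\ref{t:thm212} by relaxing the equality hypothesis to the strict inequality $|A^\pm| > 4|G|/(3\rho-1)$, and to show that this still forces $A$ to be concentrated on three cosets of a subgroup $H$ with $G/H$ cyclic of order close to $2\rho$. The key structural fact I expect to exploit is the same one driving Theorem~\ref{t:thm212}: if $\diam_A(G) \ge \rho$ then $A^\pm = A^\pm$ is a generating set with $\<A\>_{\rho-1} \neq G$, so there is an element of length $\ge \rho$. First I would pass to the period $H := \pi(A^\pm)$ (or a suitable subgroup containing it) and reduce to the aperiodic quotient, exactly as in the lifting process described before Lemma~\ref{l:lem24}. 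In the quotient $\oG := G/H$ the image $\oA := \phi(A^\pm)$ is an aperiodic $\rho$-maximal generating set, and the density hypothesis becomes $|\oA| > 4|\oG|/(3\rho-1)$. The bound from Proposition~\ref{p:prop211}, namely $\t_\rho(\oG) \le \lfl(|\oG|-2)/(\rho-1)\rfl + 1$, should now be played against this density to pin down the possible values of $|\oG|$.

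Next I would argue that the density lower bound together with $\t_\rho$ being small forces $\oG$ to be cyclic. The heuristic is that a generating set of an aperiodic quotient with diameter $\ge \rho$ and large cardinality essentially has to live on an arithmetic-progression-like structure, which only a cyclic group supports efficiently; any higher-rank quotient would split $|\oA|$ across too many ``directions'' and violate either the diameter bound or Proposition~\ref{p:prop211}. Once $\oG$ is known to be cyclic, say $\oG \cong \Z_d$, Theorem~\ref{t:thm29} gives the exact value of $\t_\rho(\Z_d)$, and comparing $\t_\rho(\Z_d) \ge |\oA| > 4d/(3\rho-1)$ should squeeze $d$ into the stated window $2\rho \le d \le \frac94\rho - 1$. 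The image $\oA$ must then be (contained in) the three-element symmetric set $\{-\bar g, 0, \bar g\}$ of $\Z_d$ for a generator $\bar g$, which upon pulling back yields $A \seq (-g+H) \cup H \cup (g+H)$ with $g+H$ generating $G/H$; the cardinality estimate $|A^\pm| > \big(3 - \tfrac{\rho-3}{3\rho-1}\big)|H|$ in (i) then follows by dividing the density inequality through by $|H| = |G|/d$ and using the upper bound on $d$.

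The main obstacle, I expect, is establishing that $\oG$ must be cyclic and that $\oA$ really is supported on only three cosets rather than spread over a few more. The equality case of Theorem~\ref{t:thm212} hands us a clean three-coset structure, but here we have only strict inequality, so a priori $\oA$ could occupy four or five ``residues'' and still be dense. The proof must therefore combine the rank-$2$ refinement of Proposition~\ref{p:prop211} (which shows the density $4/(3\rho-1)$ is just barely incompatible with $\rk_2 \ge 2$ behaviour) with a careful count of how many cosets a $\rho$-maximal set can meet. I anticipate a case analysis on $\rk_2(\oG)$ and on the number of occupied cosets, where the delicate numerics of the fraction $\frac{4}{3\rho-1}$ versus $\frac{3}{2\rho}$ (the threshold from Theorem~\ref{t:thm212}) have been chosen precisely so that exceeding the former still forces the rigid three-coset picture once $\rho \ge 4$. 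Verifying that no intermediate configuration survives is where the real work lies; the remaining steps are bookkeeping built on the already-established formulae of Theorems~\ref{t:thm29} and~\ref{t:thm212}.
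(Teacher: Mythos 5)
Your reduction step contains a genuine gap that invalidates the rest of the plan. You set $H:=\pi(A^\pm)$ and assert that the image of $A^\pm$ in $G/H$ is an \emph{aperiodic $\rho$-maximal} generating set, so that Proposition~\ref{p:prop211} applies. But the theorem only assumes $\diam_A(G)\ge\rho$; nothing makes $A$ or its image $\rho$-maximal, and Proposition~\ref{p:prop211} is a bound on $\t_\rho$, i.e.\ it concerns sets that are \emph{maximal} subject to $\<A\>_{\rho-1}\neq G$ --- its proof needs this maximality (via Lemma~\ref{l:lem62}) to know that the partial sumsets $\<A\>_\tau$ are aperiodic. For merely aperiodic sets of diameter at least $\rho$ the conclusion of Proposition~\ref{p:prop211} is false, and sets of exactly this kind satisfy the hypotheses of Theorem~\ref{t:thm213}. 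Concretely, fix $\rho\ge 4$ and an odd $N>2(3\rho-1)/(\rho-3)$, let $G=\Z_{2\rho N}$ and $H=\<2\rho\>\le G$ (so $|H|=N$ and $G/H\cong\Z_{2\rho}$), pick $h\in H\stm\{0\}$, and put
$$
A=A^\pm=\big((-1+H)\cup H\cup(1+H)\big)\stm\{h,-h\}.
$$
Then $A$ generates $G$; also $\<A\>_{\rho-1}\seq\bigcup_{|k|\le\rho-1}(k+H)\neq G$, so $\diam_A(G)\ge\rho$; and $|A^\pm|=3N-2>4|G|/(3\rho-1)$ by the choice of $N$. Moreover $\pi(A^\pm)=\{0\}$: a period must lie in $H$ and stabilize $\{h,-h\}$, forcing $4h=0$, impossible for $N$ odd and $h\neq 0$. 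So your quotient step does nothing here, yet $|A^\pm|=3N-2$ strictly exceeds the bound $\lfl(|G|-2)/(\rho-1)\rfl+1$ you want to invoke. The same example shows that the theorem's subgroup cannot be $\pi(A^\pm)$: the relevant subgroup is the period of the \emph{sumset}, $\pi(\<A\>_{\rho-1})$, which here is $H$ itself and is invisible to your reduction.

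This is precisely where the paper's proof differs: it sets $H:=\pi(\<A\>_{\rho-1})$, never passes to a quotient, and applies Kneser's theorem to $\<A\>_{\rho-1}=(\rho-1)(A^\pm+H)$ to obtain $|G|\ge(\rho-1)|A^\pm+H|-(\rho-3)|H|$. If $A^\pm+H$ met four or more $H$-cosets, this would give $|G|\ge\frac{3\rho-1}{4}\,|A^\pm|$, contradicting the density hypothesis; hence $A^\pm+H$ is exactly three cosets, and the generation argument already used for Theorem~\ref{t:thm212} forces these to be $(-g+H)\cup H\cup(g+H)$ with $G/H$ cyclic generated by $g+H$, after which the numerical bounds in (i) and (ii) drop out. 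So in the paper cyclicity and the three-coset structure are \emph{consequences} of the single Kneser inequality, not inputs requiring a case analysis on $\rk_2$ or an appeal to Theorem~\ref{t:thm29}. Your skeleton can in fact be repaired: first extend $A$ to a $\rho$-maximal set $A'\supseteq A$, then quotient by $\pi(A')$, which by Lemma~\ref{l:lem62} equals $\pi(\<A'\>_{\rho-1})$. The image of $A'$ is then genuinely aperiodic and $\rho$-maximal, Proposition~\ref{p:prop211} applies, and your density squeeze yields $2\rho\le|G/\pi(A')|\le 3\rho-2$, hence the image has exactly three elements $\{0,\bar g,-\bar g\}$ with $\bar g$ generating a cyclic quotient; pulling back and reusing the density hypothesis gives (i) and (ii). The missing idea, in one phrase, is the maximal extension --- equivalently, working with the period of $\<A\>_{\rho-1}$ rather than the period of $A$.
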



\section{Context and motivation}\label{s:context}

As far as we know, diameters of finite groups have never been studied
systematically. Nevertheless, many individual problems have been considered
and to a certain extent solved. We provide several selected examples, mainly
to illustrate the various interconnections with other areas of research.

A lot of attention has been given to homocyclic groups of exponent $2$, that
is groups isomorphic to $\Z_2^r$ for some $r\in\N_0$. There are at least
three good reasons for this.

\smallskip\noindent (i) \emph{Connections with the Covering Radius in
  Coding Theory.}\quad There is a natural correspondence between
generating sets $A$ for $\Z_2^r$ and linear binary block codes of
co-dimension $r$ and minimum distance $d \ge 3$. (Given such a set
$A$, one can arrange its non-zero elements in columns to obtain a
check matrix of the code associated to $A$. For more details, see
\cite[Section 18.1]{b:chll} or \cite{b:cz}.) The length of the code
corresponding to $A$ is $|A\stm\{0\}|$ and its covering radius is
$\diam_A(\Z_2^r)$. This shows that codes of minimum distance $d\ge 3$,
co-dimension $r$, and covering radius at least $\rho$ exist if and
only if $\rho\in[1,r]$; in this case the maximum possible length of
such a code is $\s_\rho(\Z_2^r)-1$.

\smallskip\noindent (ii) \emph{Connections with Caps in Projective
  Geometries.}\quad A cap in a projective geometry is a collection of
points no three of which are collinear. In a finite projective geometry ${\rm
PG}(r-1,2)$ over the field with two elements, points can be identified with
the non-zero elements of $\Z_2^r$, and then caps correspond to subsets
$A\seq\Z_2^r\stm\{0\}$ such that no three elements of $A$ add up to zero.
Evidently, if $A$ is a cap, then for any $a\in A$ we have $a\notin\<A+a\>_3$,
whence $\diam_{A+a}(\Z_2^r)\ge 4$; moreover, if $A$ is not contained in a
hyperplane of ${\rm PG}(r-1,2)$, then $A+a$ generates $\Z_2^r$. Conversely,
suppose that $B$ is a generating set for $\Z_2^r$ such that
$\diam_B(\Z_2^r)\ge 4$ and $0\in B$. Then for any $g\in \Z_2^r \stm \<B\>_3$
the set $A:=B+g$ is a cap, not contained in a hyperplane. Therefore the
maximum size of a cap, not contained in a hyperplane, is $\s_4(\Z_2^r)$. For
(much) more on caps we refer the reader to~\cite{b:bhl,b:fp}.

\smallskip\noindent
(iii) \emph{Connections with Sum-free Sets in Combinatorial Number Theory.}
\quad A subset $A \seq G$ of an abelian group $G$ is said to be sum-free if
no two elements of $A$ add up to another element of $A$. For $G = \Z_2^r$
this reduces to $a_1+a_2+a_3\neq 0$ for all $a_1,a_2,a_3 \in A$. As shown
above, sets with this property are translates of sets of diameter greater
than three. Thus the maximum size of a sum-free subset of $\Z_2^r$, not
contained in any coset of any proper subgroup, is $\s_4(\Z_2^r)$. More
information on sum-free subsets of $\Z_2^r$ can be found in~\cite{b:cp}.

\smallskip The remarks in (i) and (iii) lead to further connections
with coding theory: it is not difficult to see that the code $C$,
corresponding to a maximal (under inclusion) sum-free subset of
$\Z_2^r$, has minimum distance $d\in\{4,5\}$ and covering radius
$\rho=2$. If $d=5$, then $C$ is a perfect code; in fact, it is known
that the only code with $d=5$ and $\rho=2$ is the repetition code of
length $5$, which has co-dimension $r=4$. For $r>4$ we necessarily
have $d=4$, and $C$ is a quasi-perfect code. Thus $\s_4(\Z_2^r)$ can
be interpreted as the maximum length of a ``non-trivial''
quasi-perfect code of co-dimension $r$ and covering radius $\rho = 2$.
(``Trivial'' quasi-perfect codes correspond to sum-free sets that are
complements of index two subgroups. These codes are extensions of the
Hamming codes.)

In their remarkable paper \cite{b:dt}, Davydov and Tombak have shown that
\begin{equation}\label{e:equ31}
  \t_4(\Z_2^r)=2^{r-2}+1\quad\text{and}\quad \s_4(\Z_2^r)=5\cdot2^{r-4}
\end{equation}
for $r \geq 4$. We note that the first of these equalities is much
subtler than the second one. Indeed, the value of $\s_4(\Z_2^r)$ was
re-established independently by other authors \cite{b:bhl,b:cp},
whereas --- to our knowledge --- no alternative proof of the formula
for $\t_4(\Z_2^r)$ has been found. Using the first of equations
\eqref{e:equ31}, Davydov and Tombak were able to treat several related
problems; for instance, they found all possible lengths $n\ge
2^{r-1}+1$ of quasi-perfect codes of co-dimension $r$ and covering
radius $\rho=2$.

For $\rho \ge 5$, until recently only estimates and no precise
formulae for $\s_\rho(\Z_2^r)$ were known; see \cite{b:cz,b:z} or
\cite[Chapter~18]{b:chll}. The exact values are determined in a
forthcoming paper by one of the present authors \cite{b:lev}.
Concerning general bounds we mention that~\cite[Lemma~3]{b:hr} can be
regarded as a precursor of our Theorem~\ref{t:thm212}: in our
notation, it asserts that
$$
\diam_A(G) \le \max\lfr 2,\frac{3|G|}{2|A|}\rfr
$$
for every generating set $A$ of a finite abelian group $G$.

It is worth pointing out that there are also investigations of the diameters
of non-abelian finite simple groups. Apart from the 26 so-called sporadic
isomorphism classes, non-abelian finite simple groups are known to fall into
two categories: they are either alternating or of Lie type. In the
survey~\cite{b:bhkls} one finds a discussion of three types of generating
sets for these groups: ``worst'' (giving maximal diameter), ``average'' (a
random choice of a prescribed number of generators) and ``best'' (giving
minimal diameter while keeping the number of generators limited). Instead of
precise formulae the authors describe asymptotic bounds, as the group order
tends to infinity. One of the important, as yet unproven
conjectures~\cite[Section~2]{b:bhkls} states that for non-abelian finite
simple groups $G$, one has $\diam(G)=(\log |G|)^{O(1)}$ as $|G|\to\infty$.


\section{Auxiliary results}\label{s:auxiliary}

For later use we list three well-known results about abelian groups. The
first two are very basic lemmata, which will be used freely without further
reference.

\begin{lemma}
If $\{g_1 \longc g_r\}$ is a generating set for a finite abelian group $G$,
then $\exp(G) = \lcm(\ord(g_1)\longc\ord(g_r))$.
\end{lemma}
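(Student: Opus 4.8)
The plan is to establish the two divisibility relations $\exp(G)\mid L$ and $L\mid\exp(G)$, where $L:=\lcm(\ord(g_1)\longc\ord(g_r))$; since both quantities are positive integers, these together force $\exp(G)=L$. Throughout I would use that, by the definition adopted earlier, $\exp(G)=\max\{\ord(g)\colon g\in G\}$.

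For the relation $\exp(G)\mid L$, I would show that $L$ annihilates $G$. Because $\{g_1\longc g_r\}$ generates the abelian group $G$, every $g\in G$ is an integer combination $g=c_1g_1\longp c_rg_r$ with $c_i\in\Z$. As $\ord(g_i)\mid L$, we have $Lg_i=0$ for each $i$, and hence $Lg=0$. Thus $\ord(g)\mid L$ for every $g\in G$; in particular the maximal element order $\exp(G)$ divides $L$.

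For the reverse relation I would show that $\ord(g_i)\mid\exp(G)$ for each $i$, so that their least common multiple $L$ divides $\exp(G)$. This reduces to the standard fact that in a finite abelian group the order of every element divides the maximal element order. To see this, fix $h\in G$ with $\ord(h)=\exp(G)=:e$ and let $g\in G$ be arbitrary. Invoking the elementary lemma that in an abelian group two elements of orders $m$ and $n$ can be combined to yield an element of order $\lcm(m,n)$, one produces an element of order $\lcm(\ord(g),e)$; maximality of $e$ then forces $\lcm(\ord(g),e)=e$, i.e.\ $\ord(g)\mid e$. Applying this with $g=g_i$ gives $\ord(g_i)\mid\exp(G)$ for all $i$, whence $L\mid\exp(G)$.

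The only step requiring genuine (though routine) work is the order-combining lemma invoked above, and this is where I would expect the real content to sit. I would prove it prime by prime: for each prime $p$ dividing $\lcm(m,n)$, let $p^{k}$ be the exact power of $p$ in $\lcm(m,n)$; at least one of the two given elements has order divisible by $p^{k}$, and a suitable integer multiple of that element has order exactly $p^{k}$. Since these prime-power orders are pairwise coprime, the sum of the corresponding elements has order equal to their product, namely $\lcm(m,n)$. As the statement is flagged as well known, this verification could alternatively just be cited; one could likewise bypass the whole argument by appealing to the structure theorem, writing $G$ in its invariant-factor form and reading off $\exp(G)$ as the largest invariant factor.
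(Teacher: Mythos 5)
Your proof is correct. Note, however, that the paper itself offers no argument to compare against: this lemma appears in Section~\ref{s:auxiliary} as one of two ``very basic lemmata'' listed as well known and used freely without further reference, so any sound proof is acceptable here. Your two-divisibility scheme is the natural one, and both halves check out: the direction $\exp(G)\mid L$ correctly uses that in an \emph{abelian} group every element is an integer combination of the generators, so $L$ annihilates all of $G$ and in particular an element of maximal order; the direction $L\mid\exp(G)$ correctly reduces to the fact that every element order divides the maximal element order, which is genuinely the nontrivial content given that the paper \emph{defines} $\exp(G)$ as $\max\{\ord(g)\colon g\in G\}$ rather than as a universal annihilator. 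Your prime-by-prime proof of the order-combining lemma is also sound: for each prime $p$ with $p^k\,\Vert\,\lcm(m,n)$, one of the two elements has order with $p$-adic valuation exactly $k$, an appropriate multiple of it has order exactly $p^k$, and elements of pairwise coprime orders in an abelian group sum to an element whose order is the product (since the relevant cyclic subgroups intersect trivially). The closing remark that one could instead invoke the invariant-factor decomposition is also valid, and is in fact closest in spirit to how the paper operates, since it works throughout with groups of type $(m_1,\dotsc,m_r)$ and reads off $\exp(G)=m_r$.
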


\begin{lemma}
If $G$ is a finite abelian group and $g \in G$ has order $\ord(g)=\exp(G)$,
then there exists a subgroup $H\le G$ such that $G = H \oplus \<g\>$.
\end{lemma}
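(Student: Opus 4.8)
The plan is to reduce the general statement to the case of a finite abelian $p$-group and then to settle that case by induction on the group order. First I would decompose $G=\bigoplus_p G_p$ into its primary components (Sylow subgroups) and write $g=\sum_p g_p$ accordingly. Since the orders $\ord(g_p)$ are pairwise coprime prime powers, one has $\ord(g)=\prod_p\ord(g_p)$ and $\exp(G)=\prod_p\exp(G_p)$; hence the hypothesis $\ord(g)=\exp(G)$ forces $\ord(g_p)=\exp(G_p)$ for every $p$. Thus it suffices to produce, for each $p$, a complement $H_p$ with $G_p=\<g_p\>\oplus H_p$, for then $H:=\bigoplus_p H_p$ satisfies $G=\<g\>\oplus H$, using that $\<g\>=\bigoplus_p\<g_p\>$ because the $\ord(g_p)$ are coprime.

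For the primary case I would fix a prime $p$, abbreviate the relevant component as $G$ again, and prove: if $G$ is a finite abelian $p$-group and $\ord(g)=\exp(G)=p^k$, then $\<g\>$ is a direct summand. I would induct on $|G|$, the case $G=\<g\>$ being trivial (take $H=\{0\}$). Otherwise I would locate a subgroup $K$ of order $p$ with $K\cap\<g\>=\{0\}$. The socle $G[p]:=\{x\in G\colon px=0\}$ contains the unique order-$p$ subgroup $\<p^{k-1}g\>$ of $\<g\>$; were these equal, $G$ would have a unique subgroup of order $p$ and hence be cyclic, whence $G=\<g\>$ by maximality of $\ord(g)$, contrary to assumption. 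So $G[p]$ properly contains $\<p^{k-1}g\>$ and supplies an element $x$ of order $p$ with $x\notin\<g\>$; set $K=\<x\>$.

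Passing to $\overline G:=G/K$, the equality $K\cap\<g\>=\{0\}$ guarantees that the image $\overline g$ retains order $p^k$, so $\ord(\overline g)=\exp(G)\ge\exp(\overline G)\ge\ord(\overline g)$ forces $\ord(\overline g)=\exp(\overline G)$. Since $|\overline G|<|G|$, the inductive hypothesis yields $\overline G=\<\overline g\>\oplus\overline L$; I would then take $L$ to be the full preimage of $\overline L$ in $G$ and verify $G=\<g\>\oplus L$. Surjectivity of $\<g\>+L\to G$ is immediate from $K\le L$, and if $ag\in L$ then its image lies in $\<\overline g\>\cap\overline L=\{0\}$, so $ag\in K\cap\<g\>=\{0\}$; this gives the direct complement. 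The only delicate point is the inductive step in the $p$-group case --- specifically, the existence of the order-$p$ complement $K$, which rests on the standard fact that a finite abelian $p$-group with a unique subgroup of order $p$ is cyclic; the remainder is routine coset bookkeeping.
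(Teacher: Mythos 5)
Your proof is correct. Note, however, that the paper offers no proof of this lemma at all: it appears in Section~\ref{s:auxiliary} as one of two ``very basic lemmata'' quoted as well known and used freely without further reference, so there is no argument of the authors to compare yours against. Your route --- primary decomposition to reduce to the $p$-group case, then induction on $|G|$ using an order-$p$ subgroup $K$ with $K\cap\<g\>=\{0\}$ found in the socle, passing to $G/K$ and lifting the complement --- is one of the standard textbook proofs, and all the steps check out: the coprimality argument does force $\ord(g_p)=\exp(G_p)$ componentwise, the image $\overline g$ retains full order precisely because $K\cap\<g\>=\{0\}$, and the preimage $L$ of $\overline L$ is indeed a complement. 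The one ingredient you import, that a finite abelian $p$-group with a unique subgroup of order $p$ is cyclic, is genuinely standard and admits an elementary induction (if $pG=\<pa\>$ then $\ord(a)=p\,|pG|=|G|$), so there is no circularity with the statement being proved.
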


The next result is a much deeper theorem due to Kneser.

\begin{theorem}[Kneser, \cite{b:kn1,b:kn2}; see also \cite{b:mann}]
 \label{t:kneser}
Let $A$ and $B$ be finite non-empty subsets of an abelian
group $G$ such that
$$
|A+B| \le |A|+|B|-1.
$$
Then, letting $H := \pi(A+B)$, we have
$$
|A+B| = |A+H|+|B+H|-|H|.
$$
\end{theorem}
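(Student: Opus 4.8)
The final statement is Kneser's classical addition theorem, so the plan is to deduce the stated \emph{equality} from the unconditional \emph{inequality form} of the theorem, and to prove that inequality by the Dyson transform together with an induction. Concretely, I would first isolate the engine of the whole argument: for arbitrary finite nonempty $A,B\seq G$, setting $H:=\pi(A+B)$, one has
$$
|A+B|\ge|A+H|+|B+H|-|H|.
$$

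\textbf{Reduction of the equality to the inequality.} Granting the displayed inequality, the theorem follows by a short integrality squeeze. Since $A\seq A+H$ and $B\seq B+H$, the hypothesis gives $|A+B|\le|A|+|B|-1\le|A+H|+|B+H|-1$. Now $A+B$, $A+H$, $B+H$ are all unions of $H$-cosets, so passing to the quotient $\phi\colon G\to G/H$ and writing $a:=|\phi(A)|=|A+H|/|H|$, $b:=|\phi(B)|=|B+H|/|H|$, $c:=|\phi(A+B)|=|A+B|/|H|$, the inequality reads $c\ge a+b-1$, whereas the hypothesis reads $|H|\,c\le|H|(a+b)-1$, i.e. $c\le a+b-1/|H|$. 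As $a,b,c$ are integers this forces $c\le a+b-1$, hence $c=a+b-1$. Multiplying back by $|H|$ yields $|A+B|=|A+H|+|B+H|-|H|$, which is exactly the assertion.

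\textbf{Proving the inequality.} I would establish the displayed inequality by induction on $|B|$. The base case $|B|=1$ is immediate: then $A+B$ is a translate of $A$, so $H=\pi(A)$, $A$ is a union of $H$-cosets, and both sides equal $|A|$. For the inductive step the tool is the Dyson $e$-transform: for $e\in G$ put $A_e:=A\cup(B+e)$ and $B_e:=B\cap(A-e)$. One checks directly that $A_e+B_e\seq A+B$ and that $|A_e|+|B_e|=|A|+|B|$, while $B_e\neq\varnothing$ precisely when $e\in A-B$ and $B_e\subsetneqq B$ precisely when $B+e\not\seq A$. In the generic situation one can choose such an $e$ with $|B_e|<|B|$, apply the inductive hypothesis to the pair $(A_e,B_e)$, and transport the resulting bound back to $A+B$ via the inclusion $A_e+B_e\seq A+B$.

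\textbf{The main obstacle.} The crux is twofold. First, the degenerate case in which \emph{no} admissible transform shrinks $B$, i.e. $B+e\seq A$ for every $e\in A-B$: here one must show that this rigidity forces $A+B$ to be stabilized by a nontrivial subgroup, which is what ultimately produces the period $H$. Second, and more delicate, is reconciling the stabilizers across the induction: the inductive hypothesis bounds $|A_e+B_e|$ in terms of $\pi(A_e+B_e)$, which need not coincide with $\pi(A+B)$ since $A_e+B_e$ may be a proper (and possibly more periodic) subset of $A+B$. Controlling how these stabilizers interact, and combining the transformed bounds correctly, is exactly the step that separates Kneser's theorem from the easier Cauchy--Davenport inequality over $\Z_p$, where the ambient group has no proper nontrivial subgroup and every sumset is automatically aperiodic. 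For organizing this case analysis I would follow the self-contained treatment of Mann~\cite{b:mann}, to which the statement already refers.
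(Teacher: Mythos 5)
The first thing to note is that the paper never proves Theorem~\ref{t:kneser} at all: it is imported as a known deep result, stated with attributions to Kneser \cite{b:kn1,b:kn2} and to Mann's book \cite{b:mann}, and introduced by the words ``a much deeper theorem due to Kneser''. So there is no in-paper argument to measure your attempt against; it has to be judged as a would-be self-contained proof of Kneser's theorem. The parts you carry out in detail are correct: the integrality squeeze reducing the stated equality to the unconditional inequality $|A+B|\ge |A+H|+|B+H|-|H|$ with $H=\pi(A+B)$ is sound (all three sets are unions of $H$-cosets, so everything descends to $G/H$, where your two bounds pin $|\phi(A+B)|$ to exactly $|\phi(A+H)|+|\phi(B+H)|-1$), and your stated properties of the Dyson transform ($A_e+B_e\subseteq A+B$, $|A_e|+|B_e|=|A|+|B|$, the criteria for $B_e\neq\varnothing$ and $B_e\subsetneqq B$) as well as the base case $|B|=1$ all check out.

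Judged as a proof, however, there is a genuine gap: the inductive step is never executed. The two difficulties you flag --- (a) that the rigid case $B+e\subseteq A$ for all $e\in A-B$ must be shown to force a nontrivial period of $A+B$, and (b) that $\pi(A_e+B_e)$ need not coincide with $\pi(A+B)$, so the transformed bound cannot simply be transported back --- are not side issues to be dispatched in a remark; they constitute essentially the entire content of Kneser's theorem, and overcoming them requires the full machinery (a secondary induction, case distinctions according to whether the stabilizer grows under the transform, and the delicate bookkeeping that distinguishes Kneser from Cauchy--Davenport). Handling them by ``following Mann'' reduces your proposal to a citation of \cite{b:mann} preceded by correct but routine preprocessing --- which is, in substance, exactly what the paper itself does by quoting the theorem. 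Honest as your write-up is about where the difficulty lies, it identifies the hard core of the proof rather than supplying it, and so it does not establish the statement.
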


Since, in the above notation, we have $|A+H|\ge |A|$ and $|B+H|\ge
|B|$, Theorem~\ref{t:kneser} shows that $|A+B|\ge |A|+|B|-|H|$. A
straightforward induction yields

\begin{corollary}\label{c:cor44}
Let $A_1\longc A_r$ be finite non-empty subsets of an abelian group $G$, and
write $H:=\pi(A_1\longp A_r)$. Then we have
$$
|A_1\longp A_r| \ge |A_1|\longp |A_r|-(r-1)|H|.
$$
In particular, if $A_1 \longp A_r$ is aperiodic, then
$$
|A_1\longp A_r|\ge|A_1|\longp|A_r|-(r-1).
$$
\end{corollary}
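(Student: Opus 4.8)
The plan is to reduce the $r$-summand statement to the two-summand case and induct on $r$, telescoping the resulting inequalities. The two-summand case is exactly the bound recorded just before the corollary: for finite non-empty $A,B\seq G$ and $K:=\pi(A+B)$ one has $|A+B|\ge|A|+|B|-|K|$. (If $|A+B|\le|A|+|B|-1$ this is Kneser's theorem combined with $|A+K|\ge|A|$ and $|B+K|\ge|B|$; otherwise it holds trivially since $|K|\ge 1$.) This furnishes the base case $r=2$, while the case $r=1$ is the tautology $|A_1|\ge|A_1|$, as then $(r-1)|H|=0$.

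For the inductive step I would set $S_k:=A_1\longp A_k$ and $H_k:=\pi(S_k)$, so that $S_r=S_{r-1}+A_r$ and $H=H_r$. Applying the two-summand bound to $S_{k-1}$ and $A_k$ gives $|S_k|\ge|S_{k-1}|+|A_k|-|H_k|$ for each $k\in[2,r]$. Adding these and cancelling the intermediate terms $|S_2|\longc|S_{r-1}|$ yields
$$
|S_r|\ge|A_1|\longp|A_r|-\sum_{k=2}^r|H_k|,
$$
so it suffices to prove $\sum_{k=2}^r|H_k|\le(r-1)|H|$.

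The main point, and the only real obstacle, is to control the intermediate periods $H_k$, which in general differ from the target period $H=H_r$. Here I would invoke the elementary monotonicity $\pi(X)\seq\pi(X+Y)$, valid for all subsets $X,Y\seq G$: if $X+h=X$ then $(X+Y)+h=(X+h)+Y=X+Y$, so any period of $X$ is a period of $X+Y$. Taking $X=S_{k-1}$ and $Y=A_k$ gives $H_{k-1}\seq H_k$, whence the periods form an ascending chain $H_1\seq H_2\seq\dotsb\seq H_r=H$ of subgroups of $G$. In particular $|H_k|\le|H|$ for every $k\in[2,r]$, so $\sum_{k=2}^r|H_k|\le(r-1)|H|$, and the first inequality of the corollary follows.

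Finally, the ``in particular'' clause is simply the special case in which $S_r=A_1\longp A_r$ is aperiodic: then $H=\{0\}$, so $|H|=1$ and the general bound collapses to $|A_1\longp A_r|\ge|A_1|\longp|A_r|-(r-1)$.
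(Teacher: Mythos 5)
Your proof is correct and follows essentially the same route as the paper: the paper derives the two-summand bound $|A+B|\ge|A|+|B|-|\pi(A+B)|$ from Kneser's theorem and then simply states that ``a straightforward induction yields'' the corollary, and your telescoping argument is exactly that induction made explicit. Your verification of the monotonicity $\pi(X)\seq\pi(X+Y)$, which keeps the intermediate periods $H_k$ inside the final period $H$, is precisely the detail needed to make that induction go through.
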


In fact, Theorem~\ref{t:kneser} and Corollary~\ref{c:cor44} are equivalent,
as the former can easily be derived from the latter. For this reason we often
refer to Corollary~\ref{c:cor44} simply as ``Kneser's theorem''.


\section{The absolute diameter}\label{s:abs_diam}

In this section we establish Theorem~\ref{t:thm21},
Theorem~\ref{t:thm22}, and Corollary~\ref{c:cor23} after proving some
subsidiary results.

\begin{lemma}\label{l:lem51}
  Let $G = G_1 \longop G_r$ be a finite abelian group, and let $A =
  A_1\longu A_r$ where $A_i \seq G_i$ for all $i \in [1,r]$. Then
  $$ \diam_{A}(G) = \diam_{A_1}(G_1) \longp \diam_{A_r}(G_r). $$
\end{lemma}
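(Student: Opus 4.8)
The plan is to reduce the claim about diameters to a pointwise statement about lengths. Concretely, I would first show that for every $g\in G$, written uniquely as $g=g_1\longp g_r$ with $g_i\in G_i$ according to the direct sum, one has
$$
l_A(g)=\sum_{i=1}^r l_{A_i}(g_i),
$$
with the convention that a sum containing a term $\infty$ equals $\infty$. Granting this identity, the lemma is immediate: the diameter is the maximum of the corresponding length function, and since the components $g_i$ range independently over the $G_i$, the maximum of a sum splits as the sum of the maxima, giving
$$
\diam_A(G)=\max_{g\in G}l_A(g)=\sum_{i=1}^r\max_{g_i\in G_i}l_{A_i}(g_i)=\sum_{i=1}^r\diam_{A_i}(G_i).
$$

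For the length identity, the inequality $l_A(g)\le\sum_i l_{A_i}(g_i)$ is the easy direction. Given, for each $i$, a shortest representation $g_i=x_{i,1}\longp x_{i,\rho_i}$ with $x_{i,k}\in A_i^\pm$ and $\rho_i=l_{A_i}(g_i)$, I concatenate these. Since $A_i^\pm\seq A^\pm$ for every $i$ — indeed $A^\pm=\{0\}\cup\bigcup_i(A_i\cup(-A_i))=\bigcup_i A_i^\pm$ — this exhibits $g$ as a sum of $\rho_1\longp\rho_r$ elements of $A^\pm$, whence $l_A(g)\le\sum_i\rho_i$.

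The crux is the reverse inequality, and here the directness of the sum is exactly what I exploit. Starting from a shortest representation $g=x_1\longp x_n$ with $x_j\in A^\pm$ and $n=l_A(g)$, I observe that every nonzero element of $A^\pm$ lies in $G_i\stm\{0\}$ for a single index $i$, because the subgroups $G_i$ meet pairwise only in $0$. Thus, after discarding any $x_j$ equal to $0$, each surviving summand is assigned unambiguously to exactly one block $G_i$. Collecting, for each $i$, the summands lying in $G_i$ produces an element $S_i\in G_i$ written as a sum of $n_i$ elements of $A_i^\pm$, with $\sum_i n_i\le n$. Since $g=\sum_i S_i$ and the decomposition $g=\sum_i g_i$ is unique, I conclude $S_i=g_i$, so $l_{A_i}(g_i)\le n_i$ and therefore $\sum_i l_{A_i}(g_i)\le n=l_A(g)$.

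Finally I would check that the degenerate case is consistent. If some $A_i$ fails to generate $G_i$, then $\<A\>=\<A_1\>\longop\<A_r\>\ne G$, so whenever $g_i\notin\<A_i\>$ both sides of the length identity are $\infty$, and both sides of the lemma are $\infty$ as well; the conventions $\min\varnothing=\infty$ and taking maxima in $\N_0\cup\{\infty\}$ keep each step valid. The only real obstacle is the bookkeeping in the reverse inequality, namely justifying that sorting the summands by component yields genuine representations in the individual $A_i^\pm$; this is precisely where the hypotheses $G_i\cap G_j=\{0\}$ and the uniqueness of the direct-sum decomposition are used, and no deeper tool (in particular, not Kneser's theorem) is required.
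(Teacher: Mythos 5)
Your proof is correct and follows essentially the same route as the paper: the paper's entire proof consists of asserting the pointwise identity $l_A(g)=l_{A_1}(g_1)\longp l_{A_r}(g_r)$ and noting that the assertion follows. You simply supply the details the paper leaves implicit --- the concatenation argument for one inequality, the sorting of summands into blocks (using $G_i\cap G_j=\{0\}$) for the other, and the splitting of the maximum over independent components --- all of which are sound.
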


\begin{proof}
Given $g = g_1\longp g_r \in G$, with $g_i\in G_i$ for all $i\in[1,r]$, we
have
$$
l_{A}(g) = l_{A_1}(g_1)\longp l_{A_r}(g_r),
$$
and the assertion follows.
\end{proof}

\begin{lemma}\label{l:lem52}
  Let $n_1\longc n_r\in \N$ where $r \geq 2$. Suppose that for every
  $i\in[1,r]$ we have
  $$
  \lcm(n_1\longc n_r) > \lcm(n_1\longc n_{i-1},n_{i+1}\longc n_r).
  $$
  Then
  $$
  \lcm(n_1\longc n_r) \ge 2^{r-1}\max\{n_i\colon i\in[1,r]\}.
  $$
  Consequently,
  $$ \lcm(n_1\longc n_r) > n_1\longp n_r
  $$
  and
  $$
  \Big\lfloor\frac{\lcm(n_1\longc n_r)}2\Big\rfloor >
  \lfl\frac{n_1}2\rfl \longp \lfl\frac{n_r}2\rfl.
  $$
\end{lemma}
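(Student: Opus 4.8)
The plan is to translate the hypothesis into a statement about prime-power valuations. Writing $v_p(n)$ for the exponent to which a prime $p$ divides $n$, and setting $L:=\lcm(n_1\longc n_r)$, we have $v_p(L)=\max_i v_p(n_i)$ for every $p$. The condition that dropping $n_i$ strictly lowers the lcm says precisely that for each $i$ there is a prime $p_i$ at which $n_i$ strictly dominates, i.e.\ $v_{p_i}(n_i)>v_{p_i}(n_j)$ for all $j\neq i$. First I would observe that these witnessing primes are pairwise distinct: if $p_i=p_{i'}$ for some $i\neq i'$, we would simultaneously have $v_{p_i}(n_i)>v_{p_i}(n_{i'})$ and $v_{p_i}(n_{i'})>v_{p_i}(n_i)$, a contradiction. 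I would also record the elementary remark that the hypothesis forces the $n_i$ to be pairwise distinct, since two equal entries would each be redundant in the lcm.

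For the main inequality, fix an index $k$ with $n_k=M:=\max_i n_i$. Factoring $L/n_k=\prod_p p^{\,v_p(L)-v_p(n_k)}$, every exponent is non-negative, and for each of the $r-1$ distinct primes $p_i$ with $i\neq k$ we have $v_{p_i}(L)=v_{p_i}(n_i)>v_{p_i}(n_k)$, so the corresponding exponents are at least $1$. Retaining only these factors yields $L/M\ge\prod_{i\neq k}p_i\ge 2^{r-1}$, which is exactly $L\ge 2^{r-1}M$.

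The first consequence is then immediate. Since $2^{r-1}\ge r$, we obtain $L\ge rM$, while $n_1\longp n_r\le rM$ with strict inequality because the $n_i$ are not all equal to $M$ (they are distinct and $r\ge 2$); hence $L>n_1\longp n_r$, that is $L\ge S+1$ where I write $S:=n_1\longp n_r$.

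The last inequality is the only place needing care, and I expect it to be the spot where one must not be sloppy. I would write $\lfloor L/2\rfloor=(L-\eps)/2$ with $\eps=L\bmod 2$, and $\lfl n_1/2\rfl\longp\lfl n_r/2\rfl=(S-e)/2$, where $e$ is the number of odd $n_i$; the claim becomes $L-S>\eps-e$. We already have $L-S\ge 1$. If $L$ is even the right-hand side is $-e\le 0$, so the inequality is clear. If $L$ is odd, then every $n_i$, being a divisor of $L$, is odd, whence $e=r$ and the right-hand side is $1-r\le -1<1\le L-S$. In either case the strict inequality holds. The conceptual heart is the valuation bookkeeping that produces the distinct primes $p_i$ and the bound $L\ge 2^{r-1}M$; everything afterwards is routine once that is in hand.
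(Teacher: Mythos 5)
Your proof is correct, but it reaches the key inequality $\lcm(n_1,\dots,n_r)\ge 2^{r-1}\max_i n_i$ by a genuinely different mechanism than the paper. The paper's argument is a telescoping one: it orders the $n_i$ so that $n_1$ is maximal and observes that the hypothesis forces $\lcm(n_1,\dots,n_i)$ to be a \emph{proper} divisor of $\lcm(n_1,\dots,n_{i+1})$ for each $i$ (else $n_{i+1}$ would be redundant in the full lcm), so each step in the chain multiplies the lcm by at least $2$, giving $\lcm(n_1,\dots,n_r)\ge 2\lcm(n_1,\dots,n_{r-1})\ge\dots\ge 2^{r-1}n_1$. You instead work with $p$-adic valuations: the hypothesis hands you, for each $i$, a witness prime $p_i$ at which $n_i$ strictly dominates all the other entries, these witnesses are pairwise distinct, and then $L/n_k\ge\prod_{i\ne k}p_i\ge 2^{r-1}$. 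The paper's route is shorter and avoids invoking prime factorization at all; yours buys a sharper intermediate bound ($L\ge M\prod_{i\ne k}p_i$, a product of $r-1$ \emph{distinct} primes, which exceeds $2^{r-1}M$ unless all witnesses equal small primes) and makes explicit two facts the paper leaves implicit, namely that the $n_i$ must be pairwise distinct and why $\lcm(n_1,\dots,n_r)>n_1+\dots+n_r$ holds \emph{strictly} in the boundary case $r=2$, $2^{r-1}=r$ (the paper's ``as $2^{r-1}\ge r$'' silently relies on this distinctness). Your parity analysis of the floor inequality, splitting on whether $L$ is even or odd and using that every $n_i$ divides $L$, is the same idea as the paper's closing remark, just written out in full.
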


\begin{proof}
  Without loss of generality we can assume that $\max\{n_i\colon
  i\in[1,r]\}=n_1$. Since $\lcm(n_1\longc n_i)$ is a proper divisor of
  $\lcm(n_1\longc n_{i+1})$ for every $i\in[1,r-1]$, we have
  $$
  \lcm(n_1\longc n_r) \ge 2 \lcm(n_1\longc n_{r-1}) \ge 4
  \lcm(n_1\longc n_{r-2}) \ge \ldots \ge 2^{r-1} n_1.
  $$
  Next, as $2^{r-1}\ge r$, we obtain $\lcm(n_1\longc n_r) >
  n_1\longp n_r$. For the last assertion we note that, if $\lcm(n_1
  \longc n_r)$ is odd, then so are all of $n_1\longc n_r$.
\end{proof}

\begin{proof}[Proof of Theorem~\ref{t:thm21}]
  Suppose that $G\cong\Z_{m_1}\longop\Z_{m_r}$ where $1\neq m_1\longd
  m_r$.  Considering standard generating sets we already observed that
  $\diam(G)\ge \lfloor m_1/2 \rfloor\longp\lfloor m_r/2 \rfloor$. It
  remains to establish the reverse inequality.

  Let $A$ be a generating set for $G$, and let $g\in G$. We use
  induction on $r$ to verify that
  \begin{equation}\label{e:equ51}
    l_A(g)\le\lfloor m_1/2 \rfloor \longp \lfloor m_r/2 \rfloor.
  \end{equation}

  For $r=0$ there is nothing to prove, and we assume that $r \ge 1$.
  Put $s:=|A|$ and write $A=\{a_1\longc a_s\}$. Renumbering the
  elements of $A$, if necessary, we find $t \in [1,s]$ such that
  \begin{itemize}
  \item[(i)] the group $\<a_t, a_{t+1}\longc a_s\>$ contains an element $a$ of
    order $\ord(a)=m_r$;
  \item[(ii)] for every $i \in [t,s]$ the group
    $\<a_t\longc a_{i-1},a_{i+1}\longc a_s\>$ does not contain any elements of
    order $m_r$.
  \end{itemize}
  For later use we note that these conditions are equivalent to
  \begin{itemize}
  \item[(I)] $\lcm(\ord(a_t)\longc\ord(a_s))=m_r$;
  \item[(II)] $\lcm(\ord(a_t)\longc\ord(a_{i-1}),
    \ord(a_{i+1})\longc\ord(a_s))<m_r$ for all $i\in[t,s]$.
  \end{itemize}
  Regarding the interpretation of conditions (ii) and (II) for $t=s$
  we notice that $\<\varnothing\>=\{0\}$ and $\lcm(\varnothing)=1$, in
  accordance with standard definitions.

  We find a subgroup $H\le G$ such that $G=H \oplus \<a\>$ and
  $H\cong\Z_{m_1}\longop\Z_{m_{r-1}}$. Let $\bet_1\longc\bet_s\in\Z$
  such that $b_i:=a_i+\bet_i a\in H$ for all $i \in [1,s]$.

  We note that $H =\<b_1\longc b_s\>$. Indeed, given $h\in H$ we find
  $\lam_1\longc\lam_s\in \Z$ such that
  $$
  h = \sum_{i=1}^s \lam_i a_i = \sum_{i=1}^s \lam_i (b_i - \bet_i
  a) = \sum_{i=1}^s \lam_i b_i -\Big( \sum_{i=1}^s \lam_i \bet_i
  \Big) a, $$
  and $G=H\oplus\<a\>$ implies that $h=\sum_{i=1}^s \lam_i
  b_i \in \<b_1\longc b_s\>$.

  Now we are ready to prove~\eqref{e:equ51}. We find $h\in H$ and
  $\nu\in\Z$ such that $g=h+\nu a$. By the induction hypothesis, there
  are $\lam_1\longc\lam_s\in\Z$ such that $h=\sum_{i=1}^s \lam_i b_i$
  and $\sum_{i=1}^s |\lam_i|\le\sum_{i=1}^{r-1} \lfloor m_i/2
  \rfloor$. By (i), there exist $\mu_t\longc\mu_s \in \Z$ with
  $|\mu_i|\le\lfloor\ord(a_i)/2\rfloor$ for $i\in[t,s]$ so that
  $$ g = \sum_{i=1}^s \lam_i b_i + \nu a
       = \sum_{i=1}^s \lam_i a_i
          +\Big(\nu + \sum_{i=1}^s \lam_i \bet_i \Big) a
       = \sum_{i=1}^{t-1} \lam_i a_i + \sum_{i=t}^s \mu_i a_i.
  $$

  Recalling (I) and (II), we apply Lemma~\ref{l:lem52} to find
  $$
  l_A(g) \le \sum_{i=1}^{t-1} |\lam_i| + \sum_{i=t}^s |\mu_i| \le
  \sum_{i=1}^{r-1} \lfloor m_i/2 \rfloor + \sum_{i=t}^s \lfloor
  \ord(a_i)/2 \rfloor \le\sum_{i=1}^r \lfloor m_i/2 \rfloor
  $$
  as required.
\end{proof}

\begin{lemma}\label{l:lem53}
  Let $G$ be a finite abelian group, and suppose that $A\seq G$ is
  minimal (under inclusion) subject to $\diam_A(G) = \diam(G)$. Then
  $A$ is a standard generating set.
\end{lemma}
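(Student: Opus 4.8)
The plan is to prove the lemma by induction on the rank $r=\rk(G)$, extracting the equality case hidden in the proof of Theorem~\ref{t:thm21}. First I would reformulate the hypothesis. If $A'\seq A$ generates $G$, then $\<A'\>_\rho\seq\<A\>_\rho$ for every $\rho$, so $\diam_{A'}(G)\ge\diam_A(G)=\diam(G)$; since $A'$ generates $G$, also $\diam_{A'}(G)\le\diam(G)$, whence $\diam_{A'}(G)=\diam(G)$. Thus minimality of $A$ is equivalent to saying that $A$ generates $G$, that $\diam_A(G)=\diam(G)$, and that \emph{no proper subset of $A$ generates $G$}; I will use $A$ in this form. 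The base case $r=0$ is trivial, so assume $r\ge1$ and that the statement holds in smaller rank.

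Second, I would locate in $A$ a single generator of maximal order. Rerunning the argument of Theorem~\ref{t:thm21}, choose (after renumbering) a subset $\{a_t\longc a_s\}\seq A$ minimal subject to spanning an element of order $m_r=\exp(G)$, so that conditions (I) and (II) of that proof hold. If $s-t+1\ge2$, then Lemma~\ref{l:lem52} gives $\sum_{i=t}^s\lfl\ord(a_i)/2\rfl<\lfl m_r/2\rfl$, and the bound established there yields $l_A(g)<\diam(G)$ for \emph{every} $g\in G$, contradicting $\diam_A(G)=\diam(G)$. Hence $s=t$, i.e.\ $A$ contains an element $c$ with $\ord(c)=m_r$. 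I fix a complement $G=H\oplus\<c\>$ with $H\cong\Z_{m_1}\longop\Z_{m_{r-1}}$ and let $\phi\colon G\to\bar G:=G/\<c\>\cong H$ be the quotient map. Writing $A':=A\stm\{c\}$, minimality forces $\phi$ to be injective on $A'$ with $0\notin\phi(A')$: if $\phi(a_i)=0$ then $a_i\in\<c\>$, and if $\phi(a_i)=\phi(a_j)$ with $i\ne j$ then $a_i\in\<a_j,c\>$, so in either case $A\stm\{a_i\}$ still generates $G$. The same lifting argument shows no proper subset of $\bar A:=\phi(A')$ generates $\bar G$. Moreover the elementary inequalities $\diam_{\bar A}(\bar G)\le\diam_A(G)$ and $l_A(g)\le l_{\bar A}(\phi(g))+\lfl m_r/2\rfl$, together with $\diam(G)=\diam(\bar G)+\lfl m_r/2\rfl$ (Theorem~\ref{t:thm21}), force $\diam_{\bar A}(\bar G)=\diam(\bar G)$. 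Thus $\bar A$ is minimal subject to attaining the absolute diameter of $\bar G$, and by induction it is a standard generating set of $\bar G$. In particular $|A|=r$, and lifting the images $\phi(a_i)$ into $H$ produces a standard generating set $\{b_1\longc b_{r-1}\}$ of $H$ with $a_i=b_i+\gamma_i c$ for suitable $\gamma_i\in\Z$.

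The remaining, and main, obstacle is to upgrade this to genuine standardness, i.e.\ to show $\ord(a_i)=m_i$ for all $i$ (equivalently $G=\<a_1\>\longop\<a_{r-1}\>\oplus\<c\>$). This cannot be read off from $\bar G$, since $\phi$ is blind to the components $\gamma_i$; the diameter condition must be used one final time. Suppose $\ord(a_j)>m_j$ for some $j$, equivalently $\eta:=m_j\gamma_j\not\equiv0\mmod{m_r}$. A direct calculation gives, for $g=\sum_i x_i b_i+zc$,
$$l_A(g)=\min\Big\{\textstyle\sum_i|k_i|+\big\|z-\sum_i k_i\gamma_i\big\|_{m_r}\ \colon\ k_i\equiv x_i\!\!\pmod{m_i}\Big\},$$
where $\|\cdot\|_{m}$ denotes distance to $0$ in $\Z_m$. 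I would then show $l_A(g)\le\diam(G)-1$ for every $g$ by comparing the all-balanced representation with the one obtained by flipping the $j$-th coordinate, $k_j\mapsto k_j\mp m_j$: this leaves the $b$-cost unchanged if $m_j$ is even and raises it by exactly $1$ if $m_j$ is odd, while translating the $\<c\>$-argument by $\eta$. The decisive arithmetical point is that $m_j\mid m_r$ and $m_j\ge2$ force $\eta$ to be a nonzero multiple of $m_j$, so that $\eta\not\equiv0,\pm1\mmod{m_r}$ (and $\eta\not\equiv\pm2$ when $m_r$, hence $m_j$, is odd); this is exactly what guarantees that the coordinate flip lowers the relevant distance by enough to absorb the extra $b$-cost. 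Consequently $\diam_A(G)\le\diam(G)-1$, a contradiction.

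Therefore every $\gamma_i$ is harmless, $A=\{a_1\longc a_{r-1},c\}$ gives a direct decomposition of $G$ with $\ord(a_i)=m_i$ and $\ord(c)=m_r$, and $A$ is standard. I expect the parity-sensitive estimate of the last paragraph to be the only genuinely delicate step; everything before it is bookkeeping built on Theorem~\ref{t:thm21}, Lemma~\ref{l:lem52}, and the reformulation of minimality.
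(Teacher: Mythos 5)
Your proposal is correct and follows essentially the same route as the paper's own proof: induction on the rank, extraction of a single element $c$ of order $m_r$ from $A$ via the strict inequality in Lemma~\ref{l:lem52}, passage to the corank-one situation to invoke the induction hypothesis, and a final contradiction obtained by the coordinate flip $k_j\mapsto k_j\mp m_j$, whose success rests on the same arithmetical point (that $m_j\gamma_j$ reduces to a nonzero multiple of $m_j$ modulo $m_r$, so its distance to $0$ in $\Z_{m_r}$ is at least $m_j$). The only differences are presentational: you pass to the quotient $G/\<c\>$ where the paper projects onto a direct complement $H$, and you phrase the last step as a uniform bound $l_A(g)\le\diam(G)-1$ for all $g$ rather than contradicting the length of a chosen diameter-attaining element.
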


\begin{proof}
  Suppose that $G$ has type $(m_1,\ldots,m_r)$, that is
  $G\cong\Z_{m_1}\longop\Z_{m_r}$ where $1 \neq m_1\longd m_r$.  We
  use induction on $r$.

  The case $r=0$ is trivial and we assume that $r\ge 1$.
  Revisit the proof of Theorem~\ref{t:thm21}; write $A=\{a_1\longc
  a_s\}$ and define $t$ as before. Following the original argument
  (while keeping in mind that the last inequality in
  Lemma~\ref{l:lem52} is strict), the equation $\diam_A(G) = \diam(G)
  = \sum_{i=1}^r \lfloor m_i/2 \rfloor$ now shows that $t=s$, so
  $\ord(a_s)=m_r$.

  As in the proof of Theorem~\ref{t:thm21}, we find $H\le G$ such that
  $G=H\oplus\<a_s\>$, and consequently $H \cong
  \Z_{m_1}\longop\Z_{m_{r-1}}$.  Furthermore, for every $i\in [1,s-1]$
  we find $\bet_i \in \Z$ such that $b_i:=a_i+\bet_i a_s\in H$. By
  minimality of $A$, we have $b_i\neq 0$, and $b_i\neq b_j$ (unless
  $i=j$) for all $i,j\in [1,s-1]$.

  It is easily seen that $B:=\{b_1\longc b_{s-1}\}$ satisfies
  $\diam_B(H)=\sum_{i=1}^{r-1} \lfloor m_i/2 \rfloor = \diam(H)$ and
  is minimal subject to this condition. By the induction hypothesis,
  $B$ is a standard generating set for $H$. This yields $s=r$,
  $$
  G = H \oplus\<a_r\> = \<b_1\> \longop \<b_{r-1}\> \oplus \<a_r\>
  $$
  and without loss of generality we may assume that $\ord(b_j)= m_j$ for
  all $j\in [1,r-1]$ .

  To show that $A$ is a standard generating set for $G$, it is enough
  to prove that $\ord(a_j)\mid m_j$, or equivalently
  \begin{equation}\label{e:equ52}
    m_r\mid m_j \bet_j
  \end{equation}
  for all $j \in [1,r-1]$.

  For a contradiction, suppose that $j \in [1,r-1]$ is an index for
  which \eqref{e:equ52} fails. Fix $g\in G$ such that
  $l_A(g)=\diam(G)$ and write $g=\lam_1b_1\longp\lam_{r-1}b_{r-1}+\lam
  a_r$, with $-m_i / 2 < \lam_i \le m_i/2$ for $i \in [1,r-1]$.
  Substituting $b_i = a_i + \bet_ia_r$ we get $g =
  \lam_1a_1\longp\lam_{r-1}a_{r-1}+\lam_r a_r$ with $-m_r/2 < \lam_r
  \le m_r/2$. As $l_A(g)=\sum_{i=1}^r \lfloor m_i/2\rfloor$, we
  actually have $|\lam_i|=\lfloor m_i/2\rfloor$ for all $i\in [1,r]$.
  Define
  \begin{align*}
    \eps &:= \begin{cases}
               0 & \text{if $\lam_j = m_j/2$,} \\
               1 & \text{if $\lam_j = (m_j-1)/2$,} \\
              -1 & \text{if $\lam_j = -(m_j-1)/2$.}
           \end{cases}
  \intertext{and}
    \mu_i &:= \begin{cases}
      \lam_i      & \text{if $i\in[1,r-1],\,i\neq j$,} \\
      -\lam_j-\eps & \text{if $i=j$.}
    \end{cases}
  \end{align*}
  Furthermore, choose $\mu_r\in\Z$ such that $|\mu_r|\le\lfloor m_r/2
  \rfloor$ and $\mu_r\equiv\lam_r+(\mu_j-\lam_j)\bet_j\mmod{m_r}$.

  Notice that $|\mu_j|=|\lam_j|+|\eps|$ and $\mu_j\equiv
  \lam_j\mmod{m_j}$. The latter relation implies
  $$
  \mu_j a_j+\mu_r a_r = \mu_j(b_j-\bet_j a_r) + \lpr
  \lam_r+(\mu_j-\lam_j)\bet_j \rpr a_r = \lam_j a_j + \lam_r a_r,
  $$
  and hence $g=\mu_1 a_1\longp\mu_r a_r$.

  Moreover, $\mu_r\equiv\lam_r-m_j\bet_j\mmod{m_r}$ if $\eps=0$, and
  $\mu_r\equiv\lam_r-\eps m_j\bet_j\mmod{m_r}$ otherwise. From the
  fact that $m_j$ is a proper divisor of $m_r$ and $m_r\nmid m_j
  \bet_j$, it is not difficult to derive that $|\mu_r|\le
  |\lam_r|-m_j+1$. Thus $|\mu_j|+|\mu_r|\le
  |\lam_j|+|\eps|+|\lam_r|-m_j+1<|\lam_j|+|\lam_r|$.

  Therefore we obtain $g=\mu_1 a_1\longp\mu_r a_r$ where
  $\sum_{i=1}^r|\mu_i|<\sum_{i=1}^r |\lam_i|=l_A(g)$, a contradiction,
  as required.
\end{proof}

\begin{lemma}\label{l:lem54}
  \textup{(i)} Let $G=\Z_3\oplus \Z_3$ and put
  $A:=\{(1,0),(0,1),(1,1)\}^\pm$. Then $A$ satisfies
  $\diam_A(G)=\diam(G)$ and is maximal (under inclusion) subject to
  this condition. \\
  \textup{(ii)} Let $G=\Z_3\oplus\Z_3\oplus\Z_3$ and put
  $A:=\{(1,0,0),(0,1,0),(0,0,1),(1,\eps_1,0),(1,0,\eps_2)\}^\pm$,
  where $\eps_1,\eps_2\in\{-1,1\}$. Then $\diam_A(G) < \diam(G)$.
\end{lemma}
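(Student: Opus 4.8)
The statement is a pair of concrete finite verifications, in $\Z_3\oplus\Z_3$ and in $\Z_3^3$. By Theorem~\ref{t:thm21} the relevant absolute diameters are $\diam(\Z_3\oplus\Z_3)=\lfloor3/2\rfloor+\lfloor3/2\rfloor=2$ and $\diam(\Z_3^3)=3$. My plan is to compute the sets $\<A\>_\rho$ directly, but to organize the bookkeeping by the \emph{weight} of a group element (its number of non-zero coordinates); this turns a brute-force check into a short structured argument and isolates the one place where the off-diagonal generators are actually needed.

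For part (i) I first note that $A$ is already symmetric, so $A^\pm=A=\{0,\pm(1,0),\pm(0,1),\pm(1,1)\}$, a set of seven elements. The only two elements of $G$ missing from $A$ are $(1,2)$ and $(2,1)$, and these form a single $\pm$-pair. Since $(1,2)=(1,0)+(0,2)$ and $(2,1)=(2,0)+(0,1)$ lie in $\<A\>_2$ but not in $\<A\>_1=A$, we get $\diam_A(G)=2=\diam(G)$. For maximality I argue that any $A'\supsetneq A$ must contain at least one of $(1,2),(2,1)$, hence (passing to the symmetric closure) both, so that $(A')^\pm=G$ and therefore $\diam_{A'}(G)=1<\diam(G)$. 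Thus $A$ is maximal subject to $\diam_A(G)=\diam(G)$.

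For part (ii) I first reduce to the case $\eps_1=\eps_2=1$. The diagonal automorphism $\sigma(x,y,z)=(x,\eps_1 y,\eps_2 z)$ of $\Z_3^3$ fixes $(1,0,0)$, sends $(0,1,0),(0,0,1)$ to $\pm$ themselves, and carries $(1,\eps_1,0),(1,0,\eps_2)$ to $(1,1,0),(1,0,1)$; hence it maps the symmetric closure of the given set onto that of the $\eps_1=\eps_2=1$ set. Since automorphisms preserve the length function and thus the diameter, it suffices to treat that case. Writing $g_1,g_2,g_3$ for the standard basis and $g_4=(1,1,0)$, $g_5=(1,0,1)$, I then show $\<A\>_2=\Z_3^3$ by weight: every weight-$\le1$ element lies in $A^\pm$, and every weight-$2$ element is of the form $\pm g_i\pm g_j$ with $i\neq j$ in $\{1,2,3\}$, hence has length $\le2$. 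The eight weight-$3$ elements $(\pm1,\pm1,\pm1)$ split into the four $\pm$-pairs realized by $g_4+g_3$, $g_4-g_3$, $g_5-g_2$, and $g_4+g_5$, namely $\pm(1,1,1)$, $\pm(1,1,2)$, $\pm(1,2,1)$, $\pm(2,1,1)$; these exhaust all eight patterns. Hence $\<A\>_2=G$, giving $\diam_A(G)\le2<3=\diam(G)$.

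The only point requiring genuine attention is the weight-$3$ layer in part (ii): these are exactly the elements that cannot be reached from the three standard generators in two steps, so the whole argument rests on verifying that the two extra generators $g_4=g_1+\eps_1 g_2$ and $g_5=g_1+\eps_2 g_3$, together with the basis, cover all eight sign patterns. Their off-diagonal structure is precisely what makes this work, which is why reducing to $\eps_1=\eps_2=1$ via $\sigma$ — rather than enumerating the four sign choices separately — is the cleanest route; everything else is immediate from the weight decomposition.
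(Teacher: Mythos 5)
Your proof is correct and follows essentially the same route as the paper: both compute the absolute diameters from Theorem~\ref{t:thm21}, use in part (i) that $A=G\stm\{(1,-1),(-1,1)\}$, and in part (ii) reduce to $\eps_1=\eps_2=1$ by an automorphism before verifying $\<A\>_2=G$ directly. The paper merely leaves the final verifications as ``easy to check,'' which your weight-stratified bookkeeping fills in explicitly.
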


\begin{proof}
  (i) Theorem~\ref{t:thm21} shows that $\diam(G)=2$, and the claim
  follows easily from $A=G\stm\{(1,-1),(-1,1)\}$.

  \smallskip\noindent (ii) Theorem~\ref{t:thm21} shows that $\diam(G)
  = 3$. Applying an automorphism of $G$ if necessary, we may assume
  without loss of generality that $\eps_1=\eps_2=1$. Now it is easy to
  check that $\diam_A(G)=2$.
\end{proof}

\begin{proof}[Proof of Theorem~\ref{t:thm22}]
  Let $A \seq G$. If $\< A \> \neq G$, then neither (i) nor (ii)
  holds.  Now suppose that $A$ generates $G$ and show that assertions
  (i) and (ii) of the theorem are equivalent. One direction is easy:
  if (ii) holds, then Lemmata~\ref{l:lem51} and~\ref{l:lem54} show
  that $\diam_A(G)=\diam_B(G)$, as wanted.

  Now suppose that (i) holds, that is $\diam_A(G)=\diam(G)$. We can
  assume that $A^\pm = A$. By Lemma~\ref{l:lem53}, there is a standard
  generating set $B \seq A$.

  To simplify the notation, we assume further that $G= \Z_{m_1}
  \longop \Z_{m_r}$ where $1 < m_1 \mid \dotsb \mid m_r$ and that
  $B=\{(1,0,\ldots,0)\longc(0,\ldots,0,1) \}$. Let $g\in G$ with
  $l_A(g)=\diam(G)=\sum_{i=1}^r \lfloor m_i/2 \rfloor$. Then certainly
  $l_B(g)=\sum_{i=1}^r \lfloor m_i/2 \rfloor$, and so
  $g=(\lam_1,\ldots,\lam_r)$ for suitable $\lam_1\longc\lam_r\in\Z$
  with $|\lam_i|=\lfl m_i/2 \rfl$ for all $i\in[1,r]$.

  Now suppose that $a = (\alp_1 \longc \alp_r) \in A \stm B^\pm$. Then
  $a$ has at least two non-zero components. If it had more than two
  non-zero components, then $l_B (g+a)$ or $l_B (g-a)$ would be less
  than $l_B (g) - 1$, a contradiction to $l_A (g) = l_B (g)$. So
  exactly two components of $a$ are non-zero, say $\alp_i$ and
  $\alp_j$ where $i < j$.

  Moreover, we must have $\alp_i, \alp_j \in \{-1,1\}$, because
  otherwise $l_B(g+a)$ or $l_B(g-a)$ would again be less than
  $l_B(g)-1$. Our next aim is to show that $m_i=m_j=3$.

  If one of $m_i,m_j$ were even, then $l_B(g+a)$ or $l_B(g-a)$ would
  be less than $l_B(g)-1$, a contradiction. If $m_i$ and $m_j$ were
  both greater than $4$, then $l_B(g+2a)$ or $l_B(g-2a)$ would be less
  than $l_B(g)-2$, impossible. If $m_i=3$ and $m_j>3$, then either one
  of $l_B(g+a)$, $l_B(g-a)$ would be less than $l_B(g)-1$ or one of
  $l_B(g+2a)$, $l_B(g-2a)$ would be less than $l_B(g)-2$, again a
  contradiction.

  The only remaining possibility is $m_i = m_j = 3$, and in view of
  Lemma~\ref{l:lem51}, we are reduced to the case where $G$ is a
  homocyclic group of exponent $3$. Now (ii) follows from
  Lemma~\ref{l:lem54}.
\end{proof}

\begin{proof}[Proof of Corollary~\ref{c:cor23}]
  We have to bound the size of $A \subseteq G$ satisfying
  $\diam_A(G)=\diam(G)$. For brevity, we write $\nu_2:=\nu_2(G),\
  \nu_3:=\nu_3(G)$, and $r:=\rk(G)$.  Since $A$ generates $G$, we
  certainly have $r\le |A|$, and it remains to show that $|A| \le 1 +
  2 r - \nu_2 + 2 \lfloor \nu_3 /2 \rfloor$.

  Based upon Theorem~\ref{t:thm22}, we pick a standard generating set
  $B = \{b_1\longc b_r\}$ for $G$ such that
  $$
  A \seq \big(B \cup \{ b_{2i-1}+b_{2i} \colon i \in [1,\nu_3 /2]
  \}\big)^\pm.
  $$

  First suppose that $\nu_3 = 0$. Then we have $A \subseteq B^\pm$,
  and $B^\pm$ is the disjoint union of $\{0\}$, $\{ b_i \colon i \in
  [1,\nu_2] \}$, $\{ b_i \colon i \in [\nu_2 + 1,r] \}$, and $\{ -b_i
  \colon i \in [\nu_2 +1,r] \}$. This gives $|A| \leq 1 + \nu_2 + 2(r
  - \nu_2) = 1 + 2 r - \nu_2$, as wanted.

  Now suppose that $\nu_3 \neq 0$. Then $\nu_2 = 0$, and the set $(B
  \cup \{ b_{2i-1}+b_{2i} \colon i \in [1,\nu_3 /2] \})^\pm$ is the
  disjoint union of $\{0\}$, $B$, $-B$, $\{ b_{2i-1}+b_{2i} \colon i
  \in [1,\nu_3 /2] \}$, and $\{ -(b_{2i-1}+b_{2i}) \colon i \in
  [1,\nu_3 /2] \}$. This gives $|A| \leq 1 + 2r + 2 \lfloor \nu_3 /2
  \rfloor$, completing the proof.
\end{proof}


\section{The size of small diameter sets, I: First results}\label{s:bounds_I}

Let $G$ be a finite abelian group, and suppose that
$\rho\in[2,\diam(G)]$. In this section we collect some general
observations regarding the invariants $\t_\rho(G)$ and $\s_\rho(G)$,
and we determine them completely for $\rho=2$.  The assertions of
Lemma~\ref{l:lem24}, Example~\ref{p:exam25},
Proposition~\ref{p:prop26}, Corollary~\ref{c:cor28}, and
Proposition~\ref{p:prop211} will be proved.

We start with Lemma~\ref{l:lem24}, explaining the relation between
$\t_\rho(G)$ and $\s_\rho(G)$.

\begin{proof}[Proof of Lemma~\ref{l:lem24}]
  Recall that $\rho \in [2,\diam(G)]$. We have to show that
  $$
  \s_\rho(G) = \max \{ |H| \cdot \t_\rho(G/H) \colon H \lneqq G \}.
  $$

  Suppose that $A$ is a $\rho$-maximal generating set for $G$, and
  write $H := \pi(A) \lneqq G$. Then the image $\oA$ of $A$ under the
  canonical homomorphism $G \to G/H$ is $\rho$-maximal in $G/H$,
  non-zero, and aperiodic; see the discussion in
  Section~\ref{s:subsec22}. We get $\t_\rho(G/H)\ge |\oA| = |A|/|H|$,
  and if $A$ is chosen so that $|A|=\s_\rho(G)$, this yields
  $$
  \s_\rho(G) \le |H| \cdot \t_\rho(G/H).
  $$

  Conversely, let $H \lneqq G$ and suppose that $\oA$ is an aperiodic
  $\rho$-maximal generating set for $G/H$. Let $A$ denote the full
  pre-image of $\oA$ in $G$ under the canonical homomorphism $G \to
  G/H$. Then $A$ is a generating set for $G$ with $\diam_A(G) \geq
  \rho$. If $\oA$ is chosen so that $|\oA|=\t_\rho(G/H)$, we get
  $$
  \s_\rho(G) \ge |A| = |H| \cdot |\oA| = |H| \cdot \t_\rho(G/H).
  $$
\end{proof}

In connection with the equation $\t_1(G) = 0$ it was indicated that
$\t_\rho(G)$ may also vanish for certain $\rho\in [2,\diam(G)]$.
Example~\ref{p:exam25} describes a situation of this kind.

\begin{proof}[Explanation of Example~\ref{p:exam25}]
  Let $G = \Z_2 \oplus \Z_{2^{n+1}}$ with $n \geq 2$.
  Theorem~\ref{t:thm21} shows that $\diam(G) = 1 + 2^n$. Let $A \seq G$
  be maximal subject to $\<A\>_{2^n -1} \neq \<A\> = G$; we have to
  show that $A$ is periodic.

  By maximality, we have $A = A^\pm$. Since $A$ generates $G$, there
  exists an element $a_1 \in A$ of order $\ord(a_1) = 2^{n+1}$, and
  furthermore there exists an element $a_2 \in A$ such that $a_2
  \notin \<a_1\>$. Without loss of generality, we may assume that $a_1
  = (0,1)$ and $a_2 = (1,\alp)$ with $0 \leq \alp \leq 2^n$.

  Notice that, whenever $(1,\bet) \in A$ (this holds for instance for
  $\bet = \alp$), then
  \begin{equation}\label{e:equ61}
  \begin{split}
    G \stm \<A\>_{2^n -1} & \seq (\Z_2 \oplus \Z_{2^{n+1}}) \stm \<
    (1,\beta),(0,1) \>_{2^n -1} \\
    & \seq \{(0,2^n), (1,2^n-\bet), (1,2^n-1-\bet),
    (1,2^n+1-\bet)\}^\pm,
  \end{split}
  \end{equation}
  and because $\<A\>_{2^n -1} \neq G$, at least one of
  $(0,2^n)$, $(1,2^n-\bet)$, $(1,2^n-1-\bet)$, $(1,2^n+1-\bet)$
  has length greater than $2^n -1$ with respect to $A$.

  \noindent
  \emph{Assertion. If $b=(1,\bet)\in A$, then
    $b\in\{(1,0),(1,1),(1,2^n -1),(1,2^n)\}^\pm$.} \\
  For a contradiction, suppose that $b = (1,\bet) \in
  A\stm\{(1,0),(1,1),(1,2^n-1),(1,2^n)\}^\pm$. Then we have $2b = (0,2
  \bet) \notin \{(0,0),(0,1),(0,2),(0,3)\}^\pm$, and thus
  \begin{align*}
    &(0,2^n) - 2b = (0,2^n -2\bet) \in \<a_1\>_{2^n-4}, & & \text{so
      $l_A(0,2^n) \leq 2^n -2$;}\\
    &(1,2^n - \bet) - b = (0,2^n - 2\bet) \in
      \<a_1\>_{2^n-4}, & & \text{so $l_A(1,2^n- \bet) \leq 2^n - 3$;}\\
    &(1,2^n - 1 -\bet) - b = (0,2^n -1 -2\bet) \in
      \<a_1\>_{2^n-3}, & & \text{so $l_A(1,2^n-1- \bet) \leq 2^n - 2$;}\\
    &(1,2^n + 1 -\bet) - b = (0,2^n +1 -2\bet) \in
      \<a_1\>_{2^n-3}, & & \text{so $l_A(1,2^n+1- \bet) \leq 2^n - 2$.}
  \end{align*}
  This contradicts the observation following \eqref{e:equ61}.

  \noindent
  \emph{Assertion. If $c=(0,\gamma)\in A$, then $c\in\{(0,1)\}^\pm$.} \\
  Let $c = (0,\gamma) \in G \stm \{(0,1)\}^\pm$. Then it is easily
  seen that $\< a_1, c \>_{2^n-2} = \{ (0,\delta) \colon \delta \in
  \Z_{2^{n+1}} \}$, and hence $\< a_1, a_2, c \>_{2^n -1} = G$. It
  follows that $c \not \in A$, as required.

  The two assertions above yield
  $$
  \{ (0,1), (1,\alp) \}^\pm \seq A \seq \{ (0,1),
  (1,0), (1,1), (1,2^n -1), (1,2^n) \}^\pm.
  $$

  \noindent
  \emph{Case 1: $(1,\alp) = (1,0)$.} Because of \eqref{e:equ61} we
  certainly have $(1,2^n -1), (1,2^n) \notin A$. On the other hand,
  $(0,2^n) \notin \< (0,1),(1,0),(1,1) \>_{2^n - 1}$. So $A = \{
  (0,1), (1,0), (1,1) \}^\pm$ has non-trivial period $\{ (0,0), (1,0) \}$.

  \noindent
  \emph{Case 2: $(1,\alp) = (1,1)$.} Again \eqref{e:equ61} shows that
  $(1,2^n -1), (1,2^n) \notin A$. So as in in the first case $A = \{
  (0,1), (1,0), (1,1) \}^\pm$ has non-trivial period $\{ (0,0), (1,0)
  \}$.

  \noindent
  \emph{Case 3: $(1,\alp) \in \{ (1,2^n -1), (1,2^n) \}$.} Write the
  elements of $G$ with respect to the generating pair
  $((1,2^n),(0,1))$ rather than $((1,0),(0,1))$. In these new
  coordinates, $a_1$ is still represented by $(0,1)$, but $a_2$ is
  represented by $(1,0)$ or $(1,-1)$. We are reduced to either Case~1
  or to Case~2.
\end{proof}

We now determine $\t_2(G)$ and $\s_2(G)$. Observe that $\diam(G)\ge 2$
if and only if $|G|>3$ by Theorem~\ref{t:thm21}.

\begin{proof}[Proof of Proposition~\ref{p:prop26}]
  Let $A \seq G$. Then $A$ is $2$-maximal in $G$ if and only if it has
  the form $A = G\stm\{a,-a\}$ for some $a\in G\stm\{0\}$. Furthermore
  we have $\pi(A)=\pi(G\stm A)$, hence $A$ is $2$-maximal and
  aperiodic if and only if $A=G\stm\{a,-a\}$ for some $a\in G$ with
  $\ord(a)\notin\{1,4\}$.

  Now, if $|G|$ is even, then $|G\stm\{a,-a\}|=|G|-1$ for every $a\in
  G$ with $\ord(a)=2$. If $|G|$ is odd, then $|G\stm\{a,-a\}|=|G|-2$
  for every $a\in G\stm\{0\}$.
\end{proof}

Next we determine $\t_\rho(G)$ and $\s_\rho(G)$ for $\rho = \diam(G)$.

\begin{proof}[Proof of Corollary~\ref{c:cor28}]
  Recall that $G$ is a finite abelian group and $\rho=\diam(G)\ge 2$.
  Write $r:=\rk(G)$. Then Corollary~\ref{c:cor23}
  shows
  that $\s_\rho(G) = 1+2r-\nu_2(G)+2\lfl\nu_3(G)/2\rfl$, and it
  remains to prove that $\t_\rho(G)$ has the same value.

  In fact, Theorem~\ref{t:thm22} explains how to construct a
  $\rho$-maximal generating set for $G$: fix a standard generating set
  $B=\{b_1\longc b_r\}$, and put
  $$
  A := \big( B \cup \{b_{2i-1}+b_{2i} \colon i \in [1,\nu_3(G)/2]
  \} \big)^\pm.
  $$
  To conclude the proof we show that $A$ is aperiodic.

  For $r=1$ the claim is clear. Now consider the case $r\ge 2$. First
  suppose that $G$ is not homocyclic of exponent $3$, or that $r$ is
  odd. Then we have $(A+b_r) \cap A \seq \{0, b_r, -b_r\}$.  So
  $\{-b_1,-b_r,0\}\seq A$ implies that
  $$
  \pi(A)\seq (A+b_1)\cap(A+b_r)\cap A=\{0\}.
  $$
  Now suppose that $G$ is a homocyclic group of exponent $3$ and
  that $r$ is even. Then by construction $|A|=1+2r+r\equiv 1\mmod 3$.
  As $A$ is a union of $\pi(A)$-cosets, it follows that
  $\pi(A)=\{0\}$.
\end{proof}

We end this section with two more lemmata and a proof of
Proposition~\ref{p:prop211}.

\begin{lemma}\label{l:lem61}
  Let $G$ be a finite abelian group of odd order, and suppose that
  $\rho\in[2,\diam(G)]$. Then $\s_\rho(G)$ is odd, and $\t_\rho(G)$ is
  either zero or odd.
\end{lemma}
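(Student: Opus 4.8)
The plan is to reduce the whole statement to a single parity observation about the symmetric closure operation. The starting point is that in a group $G$ of odd order no non-zero element is its own negative: if $2g = 0$ then $g = 0$, because $\ord(g)$ divides both $2$ and $|G|$. Consequently, for any $A \seq G$ the symmetric closure $A^\pm = (-A) \cup \{0\} \cup A$ is the disjoint union of $\{0\}$ with finitely many unordered pairs $\{a,-a\}$ of distinct elements, so $|A^\pm|$ is odd.

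Next I would record that this parity is forced on every $\rho$-maximal set. Indeed, if $A$ is $\rho$-maximal then $A \seq A^\pm$, and since $\<A^\pm\>_\sigma = \<A\>_\sigma$ for every $\sigma \in \N_0$ we have $\diam_{A^\pm}(G) = \diam_A(G) \ge \rho$; maximality under inclusion therefore gives $A = A^\pm$, whence $|A|$ is odd by the previous paragraph. Thus every $\rho$-maximal generating set of $G$ has odd cardinality.

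Finally I would read off both claims from the descriptions of $\s_\rho(G)$ and $\t_\rho(G)$ as maxima of such cardinalities given in Section~\ref{s:subsec22}. Because $\rho \le \diam(G)$, some generating set attains diameter $\diam(G) \ge \rho$, and any inclusion-maximal enlargement retaining diameter $\ge \rho$ still generates $G$; hence the family of $\rho$-maximal generating sets is non-empty, so $\s_\rho(G)$ is the maximum of a non-empty finite set of odd numbers and is therefore odd. For $\t_\rho(G)$ the same maximum is taken over the (possibly empty) subfamily of \emph{aperiodic} $\rho$-maximal generating sets: if this subfamily is empty the convention $\max\varnothing = 0$ applies, and otherwise the maximum is again odd. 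Hence $\t_\rho(G)$ is either zero or odd.

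I do not anticipate a genuine obstacle here; the argument is purely a parity count. The only points that need a word of care are that the relevant maxima really are taken over $\rho$-maximal sets, so that the identity $A = A^\pm$ is available (this is exactly how $\s_\rho(G)$ and $\t_\rho(G)$ were defined), and that the family defining $\s_\rho(G)$ is non-empty for $\rho$ in the stated range.
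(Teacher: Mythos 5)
Your proof is correct and follows essentially the same route as the paper's: every $\rho$-maximal subset equals its own symmetric closure, and in a group of odd order such a set is the disjoint union of $\{0\}$ and pairs $\{a,-a\}$, hence of odd cardinality. The paper's proof is simply a terser version of this argument, leaving implicit the points you spell out (the non-emptiness of the family defining $\s_\rho(G)$ for $\rho\le\diam(G)$ and the $\max\varnothing=0$ convention for $\t_\rho(G)$).
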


\begin{proof}
  Let $A$ be a $\rho$-maximal subset of $G$. Then $A=A^\pm$, and since
  $G$ contains no elements of order two, $|A|=|A^\pm|$ is odd.
\end{proof}

\begin{lemma}\label{l:lem62}
  Let $A$ be a $\rho$-maximal subset of a finite abelian group $G$,
  where $\rho\in[2,\diam(G)]$. Then $\pi(A)=\pi(\<A\>_\tau)$ for all
  $\tau\in[1,\rho-1]$.
\end{lemma}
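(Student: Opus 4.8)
The plan is to prove the two inclusions separately, with the easy direction actually yielding more, namely a monotonicity statement for the periods along the chain $\<A\>_1\seq\<A\>_2\seq\dotsb$. First I would record that $\rho$-maximality forces $A=A^\pm$: since $\<A^\pm\>_{\rho-1}=\<A\>_{\rho-1}\neq G$ and $A\seq A^\pm$, maximality gives $A=A^\pm$; in particular $\<A\>_1=A$. Next, for every $\tau\ge 1$ one has $\<A\>_{\tau+1}=\<A\>_\tau+A^\pm$, so if $g\in\pi(\<A\>_\tau)$ then $\<A\>_{\tau+1}+g=(\<A\>_\tau+g)+A^\pm=\<A\>_{\tau+1}$, whence $\pi(\<A\>_\tau)\seq\pi(\<A\>_{\tau+1})$. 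Iterating, the periods form a non-decreasing chain, and in particular for every $\tau\in[1,\rho-1]$ we obtain $\pi(A)=\pi(\<A\>_1)\seq\pi(\<A\>_\tau)\seq\pi(\<A\>_{\rho-1})$.

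In view of this sandwich it suffices to prove the single reverse inclusion $\pi(\<A\>_{\rho-1})\seq\pi(A)$: combined with the chain above it collapses all the displayed inclusions to equalities and settles every $\tau$ at once. This reduction is the conceptual core; the remaining work is to extract that top inclusion from the maximality of $A$, and notably the argument never needs to know whether $A$ generates $G$.

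For the reverse inclusion I would argue by saturation. Put $K:=\pi(\<A\>_{\rho-1})$ and consider the $K$-saturated set $A':=A+K$. Since $K$ is a subgroup we have $-K=K$ and $0\in K$, so $A\seq A'$ and $(A')^\pm=A'$. The key computation is that saturating does not enlarge the $(\rho-1)$-fold sum: because $A=A^\pm$ and $(\rho-1)K=K$ for $\rho-1\ge 1$, we get $\<A'\>_{\rho-1}=(\rho-1)(A+K)=(\rho-1)A+K=\<A\>_{\rho-1}+K=\<A\>_{\rho-1}$, the last step using $K=\pi(\<A\>_{\rho-1})$. Hence $\<A'\>_{\rho-1}=\<A\>_{\rho-1}\neq G$, so $A'$ is a competitor in the definition of $\rho$-maximality with $A\seq A'$; maximality forces $A=A'=A+K$, i.e.\ $K\seq\pi(A)$, as desired.

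I expect the main obstacle to be the identity $\<A'\>_{\rho-1}=\<A\>_{\rho-1}$, or rather recognizing that it is available at all: it is precisely the step that converts the ``internal'' information $K=\pi(\<A\>_{\rho-1})$ into a statement about $A$ itself via maximality, and it hinges on the elementary but essential facts that $A$ is symmetric and that $(\rho-1)$-fold addition reproduces the subgroup $K$. Everything else --- the monotonicity of the periods and the reduction to the top inclusion --- is routine.
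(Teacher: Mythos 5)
Your proof is correct and follows essentially the same route as the paper: the easy monotone chain $\pi(A)\seq\pi(\<A\>_2)\seq\dotsb\seq\pi(\<A\>_{\rho-1})$, followed by saturating $A$ with $H:=\pi(\<A\>_{\rho-1})$ and invoking $\rho$-maximality via $\<A+H\>_{\rho-1}=\<A\>_{\rho-1}+H=\<A\>_{\rho-1}\neq G$ to get $A+H=A$. The only difference is that you spell out the preliminary facts ($A=A^\pm$, hence $(A+H)^\pm=A+H$ and $(\rho-1)(A+H)=(\rho-1)A+H$) which the paper leaves implicit.
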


\begin{proof}
  Clearly, we have $\pi(A)\seq\pi(\<A\>_2 )\longs\pi(\<A\>_{\rho-1})$.
  Write $H:=\pi(\< A\>_{\rho-1} )$. Since
  $$
  \<A+H\>_{\rho-1} = \<A\>_{\rho-1}+H = \<A\>_{\rho-1} \snqq G,
  $$
  by maximality of $A$ we have $A + H = A$, whence $H\seq\pi(A)$.
\end{proof}

\begin{proof}[Proof of Proposition~\ref{p:prop211}]
  Let $A$ be a $\rho$-maximal aperiodic subset of $G$. By
  Lemma~\ref{l:lem62} we have $\pi(\<A\>_\tau)=\{0\}$ for all
  $\tau\in[1,\rho-1]$. So Kneser's theorem (Corollary \ref{c:cor44}) shows
  that
  $$
  |G|-1 \ge |\<A\>_{\rho -1}| \ge (\rho-1)|A|-(\rho-2),
  $$
  hence $|A|\le (|G|-2)/(\rho-1)+1$, proving the first assertion.

  Note that the largest odd integer, not exceeding the right-hand side
  of this inequality,
  is $2\lfloor (|G|-2)/(2(\rho-1))\rfloor+1$.  So, if
  $\rk_2(G)=0$, the second assertion follows from Lemma~\ref{l:lem61}.

  Finally, suppose that $\rk_2(G)=1$, and let $g$ denote the unique
  element of order two in $G$. Put
  $k:=\lfloor(|G|-2)/(2(\rho-1))\rfloor$. By the above we have
  $$
  |A|\le \frac{|G|-2}{\rho-1} + 1 < 2k+3,
  $$
  and we need to show that in fact $|A|\le 2k+1$.

  For a contradiction, suppose that $|A|=2k+2$. Since $0\in A=A^\pm$
  and since $|A|$ is even, we conclude that $g \in A$ and therefore $g
  \in \<A\>_\tau$ for all $\tau \in [1,\rho-1]$. So for these values
  of $\tau$ the cardinalities $|\<A\>_\tau|$ are even, too. By
  Lemma~\ref{l:lem62}, the sets $\<A\>_\tau$ are aperiodic, and
  Kneser's theorem implies
  $$
  |\< A \>_\tau| \ge |\< A \>_{\tau-1}|+|A|-1\qquad (\tau \in[1,\rho-1]).
  $$
  Comparing the parities of the two sides (for each of these
  inequalities) we get
  $$
  |\< A \>_{\rho -1}| \ge |\<A\>_{\rho-2}|+|A| \longge (\rho-1)|A|
  = 2(\rho-1)(k+1) > |G|-2.
  $$
  Since $|\<A\>_{\rho -1}|$ and $|G|$ are both even, we have
  $\<A\>_{\rho-1}=G$, a contradiction.
\end{proof}


\section{The size of small diameter sets, II: Cyclic groups}
\label{s:bounds_II}

Let $G\cong\Z_m$ be the cyclic group of order $m$. According to
Theorem~\ref{t:thm21} we have $\diam(G)=\lfl m/2\rfl$. In this section
we prove Theorem~\ref{t:thm29}, thus determining $\t_\rho(G)$ and
$\s_\rho(G)$ for all $\rho \in [1,m/2]$.

\begin{proof}[Proof of Theorem~\ref{t:thm29}]
It suffices to show that
  $$  \t_\rho(\Z_m) = 2 \lfl \frac{m-2}{2(\rho-1)} \rfl + 1; $$
the formula for $\s_\rho(\Z_m)$ then follows from Lemma~\ref{l:lem24}.

Set $k := \lfloor (m-2)/(2(\rho-1))\rfloor$, so that $k\ge 1$ in view of
$\rho\le m/2$ and
\begin{equation}\label{e:Brho-1}
   2(\rho-1)k+1 \le (m-2)+1 < m.
\end{equation}
By Proposition~\ref{p:prop211}, we have $\t_\rho(\Z_m)\le 2k+1$. Therefore,
it suffices to exhibit a $\rho$-maximal aperiodic subset $A\seq\Z_m$ of
cardinality $|A|\ge 2k+1$.

Put $B:=\{-k,-k+1\longc k\}$. Then \eqref{e:Brho-1} yields
\begin{equation}\label{e:Brho-2}
  \<B\>_{\rho-1}=\{-(\rho-1)k,-(\rho-1)k+1\longc(\rho-1)k \} \neq \Z_m,
\end{equation}
and we pick a $\rho$-maximal subset $A \seq \Z_m$ containing $B$. It
is enough to show that $A$ is aperiodic; in fact this will imply that
$A=B$.

Writing $H:=\pi(A)$, we observe that
  $$ \<B\>_{\rho-1}+H \seq \<A\>_{\rho-1}+H = \<A\>_{\rho-1} \snqq \Z_m $$
by Lemma \ref{l:lem62}.
On the other hand, \eqref{e:Brho-2} shows
that
  $$ |\<B\>_{\rho-1}| = 2(\rho-1)k+1 > 2(\rho-1)\,
                                        \frac{(m-2)}{4(\rho-1)}+1 = m/2. $$
Thus $\<B\>_{\rho-1}+H \snqq \Z_m$ implies $H=\{0\}$, as claimed.
\end{proof}

For later use we record separately the case $\rho=3$.

\begin{corollary}\label{c:cor71}
  For every $m \in \N$ with $m \ge 6$ we have
  $$
  \t_3(\Z_m) = 2\lfl \frac{m-2}{4} \rfl +1,
  $$
  and
  $$
  \s_3(\Z_m) = \begin{cases}
    \frac{m-1}{2}-1 & \text{if $m \equiv 1\mmod 4$,} \\
    \frac{m-1}{2}   & \text{if $m \equiv 3\mmod 4$,} \\
    \frac{m}{2}-1   & \text{if $m$ is a power of $2$,} \\
    \frac{m}{2}     & \text{otherwise.}
               \end{cases}
  $$
\end{corollary}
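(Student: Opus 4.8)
The plan is to read off both formulae as the specialization $\rho=3$ of Theorem~\ref{t:thm29}, which we may assume. The $\t_3$ formula is immediate: setting $\rho=3$ in $\t_\rho(\Z_m)=2\lfl(m-2)/(2(\rho-1))\rfl+1$ gives $2\lfl(m-2)/4\rfl+1$ directly, with no case analysis, valid for all $m\ge 6$ (so that $\rho=3\le m/2$). The real work is extracting the four-case formula for $\s_3$ from the maximum appearing in Theorem~\ref{t:thm29}, namely
$$
\s_3(\Z_m)=\max\lfr \frac md\lpr 2\lfl\frac{d-2}4\rfl+1\rpr \colon d\mid m,\ d\ge 6\rfr.
$$

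First I would introduce the function $f(d):=2\lfl(d-2)/4\rfl+1$ on divisors $d\ge 6$ of $m$ and analyze $f(d)$ according to the residue of $d$ modulo $4$: one computes $f(d)=(d-1)/2$ when $d\equiv 1,3\mmod 4$, $f(d)=(d-2)/2=d/2-1$ when $d\equiv 2\mmod 4$, and $f(d)=(d-2)/2=d/2-1$ when $d\equiv 0\mmod 4$ as well, so $f(d)=d/2-1$ for all even $d$. Thus $(m/d)f(d)$ equals $\tfrac{m}{2}\cdot\tfrac{d-1}{d}$ for odd $d$ and $\tfrac m2-\tfrac md$ for even $d$. The key monotonicity observation is that $\tfrac{d-1}{d}$ and $1-\tfrac 1{d/2}$ are both increasing in $d$, so to maximize we want $d$ as large as possible within each parity class --- i.e. $d=m$ when the relevant parity divisor of $m$ is available. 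Since $d=m$ is always an admissible divisor (as $m\ge 6$), the candidate maxima are $(m/m)f(m)=f(m)$ together with the best value coming from divisors of the opposite parity to $m$.

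The case split then falls out by comparing $f(m)$ against the best odd-divisor (resp. even-divisor) contribution. If $m$ is odd, every admissible $d$ is odd, the maximizer is $d=m$, and $\s_3=f(m)=(m-1)/2$; refining by $m\equiv 1$ versus $3\mmod 4$ recovers the first two lines once one checks $\lfl(m-2)/4\rfl$ evaluates to $(m-5)/4$ resp. $(m-3)/4$, giving $(m-1)/2-1$ resp.\ $(m-1)/2$. If $m$ is even and not a power of $2$, then $m$ has an odd divisor $d_0\ge 3$, and the odd contribution $\tfrac m2\cdot\tfrac{d_0-1}{d_0}$ can reach up to $\tfrac m2-1$ (attained at the largest odd divisor, whose reciprocal is smallest), which beats the best even contribution $m/2-m/d$; here the delicate point is verifying that some odd divisor actually yields $m/2$ rounded correctly --- more precisely that $\tfrac m2\cdot\tfrac{d_0-1}{d_0}$ rounds up to $m/2$ exactly when $m/d_0$ is the relevant factor, giving $\s_3=m/2$. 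If $m$ is a power of $2$, the only admissible divisors are even, the maximizer is $d=m$, and $\s_3=f(m)=m/2-1$.

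The main obstacle I anticipate is the non-power-of-two even case: one must confirm that the maximum over even $d$ (which never exceeds $m/2-m/d<m/2$) is strictly beaten by an odd-divisor term reaching $m/2$, and that this odd term is an \emph{integer} equal to $m/2$ rather than something smaller after the floor and the multiplication by $m/d$. Concretely, writing $m=2^a d_1$ with $d_1\ge 3$ odd, the divisor $d=d_1$ gives $\tfrac m{d_1}\cdot f(d_1)=2^a\cdot\tfrac{d_1-1}2=2^{a-1}(d_1-1)=m/2-2^{a-1}$, which is \emph{not} $m/2$; so the correct maximizer is instead the largest odd divisor (namely $d_1$ itself) only if $2^a=1$, and otherwise one should revisit whether an even $d$ or the odd $d_1$ wins. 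I would therefore treat this comparison carefully, writing out $(m/d)f(d)$ for $d=m$, $d=m/2$, and $d=d_1$, and checking that the supremum over all admissible $d$ equals $m/2$ precisely when $m$ is even and not a $2$-power --- this parity-and-floor bookkeeping is where all the genuine effort lies, the rest being direct substitution into Theorem~\ref{t:thm29}.
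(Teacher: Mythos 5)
Your top-level route --- specialize Theorem~\ref{t:thm29} at $\rho=3$ and analyze the maximum over divisors --- is exactly the paper's, and your treatment of $\t_3(\Z_m)$ is fine. But your evaluation of $f(d):=2\lfl (d-2)/4\rfl+1$ is wrong in two of the four residue classes modulo $4$, and one of these errors kills the case that carries the real content. The correct values are: $f(d)=(d-3)/2$ for $d\equiv 1\mmod{4}$ (not $(d-1)/2$; e.g.\ $f(5)=1$); $f(d)=(d-1)/2$ for $d\equiv 3\mmod{4}$; $f(d)=d/2$ for $d\equiv 2\mmod{4}$ (not $d/2-1$; e.g.\ $f(6)=2\lfl 4/4\rfl+1=3$); and $f(d)=d/2-1$ for $d\equiv 0\mmod{4}$. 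The class $d\equiv 2\mmod{4}$ is precisely what settles the ``otherwise'' line of the statement: if $m$ is even and not a power of $2$, write $m=2^a d_1$ with $a\ge 1$ and $d_1\ge 3$ odd; then $d=2d_1\ge 6$ divides $m$, is $\equiv 2\mmod{4}$, and contributes $(m/d)f(d)=m/2$ exactly, which is also an upper bound for every term because $f(d)\le d/2$ for all $d$. (This is the paper's argument in disguise: it writes each term as $m/2-(2m/d)\,\{(d-2)/4\}$, where $\{x\}$ is the fractional part, and observes that $\{(d-2)/4\}$ vanishes precisely when $d\equiv 2\mmod{4}$.) Working instead with the incorrect value $f(d)=d/2-1$ for such $d$, you conclude that no even divisor reaches $m/2$, then compute that odd divisors give only $m/2-2^{a-1}$, and end by conceding that one must ``revisit whether an even $d$ or the odd $d_1$ wins.'' That is an admission that the fourth case of the corollary is not proved in your write-up; with the corrected value of $f$ on $d\equiv 2\mmod{4}$ it becomes a one-line argument.

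There is a secondary gap in the odd case. Your monotonicity principle (``take $d$ as large as possible within each parity class'') rests on the false identity $f(d)=(d-1)/2$ for all odd $d$; since $f$ actually differs between $d\equiv 1$ and $d\equiv 3\mmod{4}$, you must compare across these two classes, and for $m\equiv 1\mmod{4}$ the comparison is not vacuous: a divisor $d\equiv 3\mmod{4}$ contributes $(m/2)(1-1/d)$, which exceeds your claimed maximum $f(m)=(m-3)/2$ unless one also invokes $d\le m/3$ (true, since $d\neq m$ and $m$ is odd force $d\le m/3$); with that inequality the two candidates tie --- for $m=45$, both $d=15$ and $d=45$ give $21$. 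The paper performs exactly this cross-class check. Your power-of-$2$ case is correct as written, since there every admissible divisor is $\equiv 0\mmod{4}$, where your formula for $f$ happens to be right.
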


\begin{proof}
  The expression for $\t_3(\Z_m)$ comes directly from
  Theorem~\ref{t:thm29}, which also yields
  \begin{equation}\label{e:equ71}
    \s_3(\Z_m) = \frac m2 - \min \lfr \frac{2m}d\,\lfr \frac{d-2}4
    \rfr \colon d\mid m,\,d\ge 6  \rfr,
  \end{equation}
  where $\{x\}:=x-\lfloor x \rfloor$ denotes the fractional part of
  $x \in \R$.

  If $m=2^k n$ with $n,k\in\N$ and $n\ge 3$ odd, then the minimum on
  the right hand side of \eqref{e:equ71} is attained for $d=2n\equiv
  2\mmod 4$.

  If $m = 2^k$ with an integer $k\ge 3$, then $\{(d-2)/4\}=1/2$ for
  any $d\mid m,\,d\ge 6$; therefore the minimum is attained for $d=m$.

  If $m \equiv 3\mmod 4$, then $\{(d-2)/4\}\ge 1/4=\{(m-2)/4\}$;
  again, the minimum is attained for $d = m$.

Finally, suppose that $m\equiv 1\mmod 4$. If $d\mid m$ and
 $d\equiv 1\mmod 4$, then
 $$
 \frac{2m}d\, \lfr\frac{d-2}4\rfr \ge \frac{2m}m\,
 \lfr\frac{m-2}4\rfr = \frac 32.
 $$
 If $d\mid m$ and $d\equiv 3\mmod 4$, then $d\le m/3$, and so we
 have
 $$
 \frac{2m}d\, \lfr\frac{d-2}4\rfr \ge \frac{2m}{m/3}\, \frac 14 =
 \frac 32.
 $$
 The minimum, once again, is attained for $d=m$.
\end{proof}


\section{The size of small diameter sets, III: Sets of diameter $3$}
\label{s:bounds_III}

Let $G$ be a finite abelian group. In this section we determine
$\t_3(G)$ and $\s_3(G)$, proving Theorem~\ref{t:thm27}.

\begin{lemma}\label{l:lem81}
  Let $G$ be a finite abelian group and let $H \le G$. Then for every
  $\rho\in[1,\diam(G/H)]$ we have $\s_\rho(G) \ge
  |H|\cdot\s_\rho(G/H)$. In particular, $\diam(G)\ge\diam(G/H)$.
\end{lemma}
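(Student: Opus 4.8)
The plan is to build, from an optimal generating set of the quotient $G/H$, a generating set of $G$ of the right size by the lifting process recorded in Section~\ref{s:subsec22}. Since $\rho\le\diam(G/H)$, the very definition of $\diam(G/H)$ supplies a generating set for $G/H$ whose diameter equals $\diam(G/H)\ge\rho$; hence the family of generating sets $\oA$ for $G/H$ with $\rho\le\diam_{\oA}(G/H)<\infty$ is non-empty, and I may select one with $|\oA|=\s_\rho(G/H)$.

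First I would arrange that this maximizer is symmetric. Passing from $\oA$ to $\oA^\pm$ does not change $\<\oA\>_\rho=\<\oA^\pm\>_\rho$ for any $\rho$, so both the diameter and the property of generating $G/H$ are preserved, while the cardinality can only increase; by maximality we must already have $\oA=\oA^\pm$. This is the small bookkeeping point that makes the subsequent preimage a genuine full preimage of a symmetric set.

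Next I would lift. Let $\phi\colon G\to G/H$ be the canonical homomorphism and set $A:=\phi^{-1}(\oA)$. By the lifting process, $A$ is a generating set for $G$ with $\diam_A(G)=\diam_{\oA}(G/H)\ge\rho$, so in particular $\diam_A(G)$ is finite; thus $A$ is admissible in the definition of $\s_\rho(G)$. The equality of diameters — the one fact I would either quote from Section~\ref{s:subsec22} or check directly — rests on the observation that $A=A^\pm$ is a union of $H$-cosets (it contains $H=\phi^{-1}(0)$ since $0\in\oA$), whence $\<A\>_\rho=\rho A$ is again a union of $H$-cosets satisfying $\phi(\<A\>_\rho)=\rho\oA=\<\oA\>_\rho$; therefore $\<A\>_\rho=\phi^{-1}(\<\oA\>_\rho)$, and $\<A\>_\rho=G$ holds exactly when $\<\oA\>_\rho=G/H$. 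Because $A$ is the full preimage of $\oA$, it decomposes into $|\oA|$ cosets of size $|H|$, so $|A|=|H|\cdot|\oA|=|H|\cdot\s_\rho(G/H)$, and $\s_\rho(G)\ge|A|$ gives the claimed inequality.

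For the final assertion I would apply the inequality with $\rho=\diam(G/H)$: if this equals $0$ the claim $\diam(G)\ge 0$ is trivial, and otherwise the lifted set $A$ constructed above has $\diam_A(G)=\diam(G/H)$, so $\diam(G)\ge\diam_A(G)=\diam(G/H)$. The only genuinely delicate step is the diameter-preservation under lifting; everything else is routine. Since that fact is already available, I expect the argument to be short, the main care being exactly the reduction to a symmetric maximizer so that $A=\phi^{-1}(\oA)$ really is the full preimage of a symmetric generating set.
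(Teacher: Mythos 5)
Your proposal is correct and takes essentially the same route as the paper: choose a generating set $\oA$ for $G/H$ with $\rho\le\diam_{\oA}(G/H)<\infty$ and $|\oA|=\s_\rho(G/H)$ (possible since $\rho\le\diam(G/H)$), lift it to its full pre-image $A=\phi^{-1}(\oA)$, and invoke the lifting discussion of Section~\ref{s:subsec22} to get that $A$ generates $G$ with $\diam_A(G)\ge\rho$ and $|A|=|H|\cdot\s_\rho(G/H)$. Your explicit symmetrization step (forcing $\oA=\oA^\pm$ by maximality, so that the cited discussion applies verbatim) is a careful bookkeeping detail that the paper leaves implicit.
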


\begin{proof}
  Given $\rho \in [1,\diam(G/H)]$, we choose a generating subset
  $\oA\seq G/H$ such that $\diam_{\oA}(G/H)\ge\rho$ and
  $|\oA|=\s_\rho(G/H)$.  Let $A\seq G$ denote the full pre-image of
  $\oA$ under the canonical homomorphism $G\to G/H$. Then $A$ is a
  generating subset of $G$ with $\diam_A(G)\ge\rho$ and
  $|A|=|H|\cdot|\oA|=|H|\cdot\s_\rho(G/H)$; cf.\ the discussion in
  Section~\ref{s:subsec22}.
\end{proof}

\begin{lemma}\label{l:lem82}
  Let $G$ be a finite abelian group with $\diam(G)\ge 3$. Then
  $$
  \s_3(G)\le |G|/2.
  $$
  Moreover, if $|G|$ is odd and $n:=\exp(G)$, then
  $$ \s_3(G)\le \begin{cases}
        \frac{n-1}{2n}\,|G| - 1 &\text{if $n \equiv 1\mmod 4$,}\\
        \frac{n-1}{2n}\,|G|     &\text{if $n \equiv 3\mmod 4$.}
                \end{cases}
  $$
\end{lemma}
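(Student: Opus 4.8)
The plan is to bound $\s_3(G)$ by producing, for any generating set $A$ with $\diam_A(G) \ge 3$, a proper subgroup $H$ whose index is large enough to force $|A|$ to be small. The natural route is through $\t_3$ and Lemma~\ref{l:lem24}, which reduces the study of $\s_3(G)$ to that of $\t_3(G/H)$ over all proper subgroups $H \lneqq G$; combined with the general upper bound of Proposition~\ref{p:prop211}, this should give the estimates directly. Concretely, since $\s_3(G) = \max\{|H|\cdot\t_3(G/H) : H \lneqq G\}$, it suffices to bound $|H|\cdot\t_3(G/H)$ for each $H$, and Proposition~\ref{p:prop211} applied to the quotient $\bar G := G/H$ gives $\t_3(\bar G) \le \lfl (|\bar G|-2)/2 \rfl + 1 \le |\bar G|/2$, which already yields the first assertion $\s_3(G) \le |G|/2$ after multiplying by $|H|$.

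For the sharper odd-order bound I would exploit the second half of Proposition~\ref{p:prop211}, which applies when $\rk_2(\bar G)\le 1$; for $|G|$ odd every quotient has $\rk_2 = 0$, so this is available for all $H$. That gives $\t_3(\bar G)\le 2\lfl(|\bar G|-2)/4\rfl + 1$, and the arithmetic of this floor is exactly what distinguishes the residues of $n = \exp(G)$ modulo $4$. The key point is that in a quotient $\bar G = G/H$ of odd order, the relevant size is governed by $|\bar G|$ and, to extract the factor $(n-1)/(2n)$, one wants the quotient to have exponent close to $n$; so I would choose $H$ to maximize $|H|\cdot\t_3(G/H)$ and track how $\lfl(|\bar G|-2)/4\rfl$ compares to $(|\bar G|-1)/2$ depending on $|\bar G| \bmod 4$. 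Since $|G|$ is odd, $|\bar G|$ is odd, so $|\bar G|\equiv 1$ or $3 \pmod 4$, and in each case one computes $2\lfl(|\bar G|-2)/4\rfl + 1$ explicitly, obtaining $(|\bar G|-1)/2 - 1$ or $(|\bar G|-1)/2$ respectively. Summing against $|H| = |G|/|\bar G|$ and optimizing over the divisor structure then produces the factor $(n-1)/(2n)$, the worst case arising when the cyclic direct factor of maximal order $n$ survives in the quotient.

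The main obstacle will be the optimization over $H$: one must verify that the bound $\frac{n-1}{2n}|G|$ (with the correct $\mp 1$ correction) is genuinely the maximum of $|H|\cdot\t_3(G/H)$, rather than merely an upper bound for one convenient choice of $H$. This requires comparing the values $\frac{|\bar G|-1}{2|\bar G|}|G|$ (adjusted by the residue of $|\bar G|$ mod $4$) across all possible orders $|\bar G|$ of quotients, and showing the expression is monotone enough that taking $|\bar G| = n$ is optimal. The delicate case is $n \equiv 1\pmod 4$, where the $-1$ correction must be propagated carefully through the maximization and shown not to be defeated by some quotient of different residue class; here one leans on the fact that any quotient of order $\equiv 3 \pmod 4$ and proper divisor of $|G|$ has index at least $3$, so its contribution $\frac{|\bar G|-1}{2|\bar G|}|G|$ falls strictly below the target. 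I expect the inequalities to parallel those already carried out in the proof of Corollary~\ref{c:cor71} for the cyclic case, so that the present lemma is essentially a structured generalization of that divisor computation.
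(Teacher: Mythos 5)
Your route to the first assertion is valid but takes different (and heavier) machinery than needed: via Lemma~\ref{l:lem24} and Proposition~\ref{p:prop211} you get $|H|\cdot\t_3(G/H)\le |H|\cdot|G/H|/2=|G|/2$ for every $H\lneqq G$, hence $\s_3(G)\le|G|/2$. The paper gets this in one line from the pigeonhole principle: if $|A|>|G|/2$ then $A\cap(g-A)\neq\varnothing$ for every $g$, so $\<A\>_2=G$.

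The plan for the odd-order refinement, however, has a genuine gap that cannot be repaired inside your framework. Proposition~\ref{p:prop211} bounds $\t_3(G/H)$ purely in terms of the \emph{cardinality} $|G/H|$, with no reference to its exponent, and the quantity you propose to maximize, $|H|\cdot\bigl(2\lfl(|G/H|-2)/4\rfl+1\bigr)$, is increasing in $|G/H|$; its maximum over $H\lneqq G$ is therefore attained at $H=\{0\}$, where it equals roughly $(|G|-1)/2$. Whenever $G$ is \emph{not} cyclic this strictly exceeds the target $\frac{n-1}{2n}|G|=\frac{|G|}2-\frac{|G|}{2n}$, because then $|G|/n\ge 3$. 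Concretely, for $G=\Z_5\oplus\Z_5$ the lemma claims $\s_3(G)\le\frac{4}{10}\cdot 25-1=9$, while your method gives only $\t_3(G)\le 2\lfl 23/4\rfl+1=11$ from the term $H=\{0\}$, and no choice of $H$ lowers the maximum below $11$. Your assertion that ``the worst case arises when the cyclic direct factor of maximal order $n$ survives in the quotient'' conflates the maximizer of the true quantity $|H|\cdot\t_3(G/H)$ with the maximizer of your \emph{upper bound} for it; the latter is $H=\{0\}$. A further slip: the $\pm1$ correction in the lemma is governed by $n\bmod 4$, not by $|G/H|\bmod 4$; these already differ for $G=\Z_3\oplus\Z_3$, where $n\equiv 3$ but $|G|\equiv 1\mmod 4$.

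What is missing is an argument that sees the exponent of $G$ directly, and this is exactly what the paper does. Let $A$ generate $G$ with $\diam_A(G)\ge 3$, fix $z\in G\stm\<A\>_2$, and put $d:=\ord(z)\le n$. Since $z\notin A-A$, no two elements of $A$ lying in the same $\<z\>$-coset can differ by $z$; as $d$ is odd, a cyclic run of length $d$ with no two cyclically adjacent members in $A$ contains at most $(d-1)/2$ elements of $A$. Summing over the $|G|/d$ cosets gives $|A|\le\frac{d-1}{2d}|G|\le\frac{n-1}{2n}|G|$, and the extra $-1$ in the case $n\equiv 1\mmod 4$ follows from parity (Lemma~\ref{l:lem61}): $\s_3(G)$ is odd, whereas $\frac{n-1}{2n}|G|$ is even when $n\equiv 1\mmod 4$. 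Note that this coset-counting step, not any cardinality bound of Kneser type, is where the factor $(n-1)/(2n)$ comes from; your expectation that the computation parallels Corollary~\ref{c:cor71} holds only for cyclic $G$, precisely because there the order and the exponent of every quotient coincide.
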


\begin{proof}
  The first assertion is immediate from the boxing principle: if $A
  \seq G$ satisfies $|A|>|G|/2$, then $\<A\>_2=2A^\pm=G$.

  Now suppose that $|G|$ is odd. Let $A$ be a generating subset with
  $\diam_A(G)\ge 3$. Fix $z\in G\stm \<A\>_2$ and write $d:=\ord(z)$.
  Since $z \notin A-A$, for every $g\in G$ at least one of the
  elements $g$ and $g+z$ does not belong to $A$. Thus every
  $\<z\>$-coset in $G$ contains at most $(d-1)/2$ elements of $A$, and
  therefore
  $$
  |A| \le \frac{d-1}2\, \frac{|G|}d \le \frac{n-1}{2n}\,|G|.
  $$
  The second assertion now follows from Lemma~\ref{l:lem61}.
\end{proof}

\begin{lemma}\label{l:lem83}
Let $G$ be a non-trivial finite abelian group.
\begin{itemize}
\item[(i)] Suppose that $\rk_2(G)=0$. Then there exists an aperiodic
  subset $A\seq G$ such that $|A|=(|G|-1)/2$ and $0\notin 2A$.
\item[(ii)] Suppose that $\rk_2(G)>\rk_2(2\ast G)$ and
  $G\not\cong\Z_2\oplus\Z_2$. Then $\t_3(G)\ge |G|/2$.
\item[(iii)] Suppose that $\rk_2(G)=\rk_2(2\ast G)\ge 1$ and
  $\exp(G)>4$. Then $\t_3(G)\ge |G|/2 -1$.
\end{itemize}
\end{lemma}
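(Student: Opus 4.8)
The plan is to prove all three parts by producing explicit aperiodic subsets; part~(i) is a one-line construction, while parts~(ii) and~(iii) both rest on a single ``almost-coset'' construction relative to an index-two subgroup, together with the cardinality bound of Lemma~\ref{l:lem82}. For part~(i), since $|G|$ is odd there are no involutions, so $G\stm\{0\}$ splits into $(|G|-1)/2$ antipodal pairs $\{x,-x\}$; I would let $A$ be any transversal of these pairs. Then $|A|=(|G|-1)/2$, and $0\notin 2A$ because $a+a'=0$ forces $a'=-a\notin A$ while $2a=0$ forces $a=0$. The only substantive point is aperiodicity, and it is immediate: if $H:=\pi(A)\neq\{0\}$ then $A$ is a union of $H$-cosets, hence so is $-A$, hence so is $G\stm\{0\}=A\cup(-A)$; but then $\{0\}$ would be a union of $H$-cosets, forcing $H=\{0\}$.

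For part~(ii), the hypothesis $\rk_2(G)>\rk_2(2\ast G)$ says exactly that $\Z_2$ is a direct summand, so I write $G=\<u\>\oplus K$ with $\ord(u)=2$ and $|K|=|G|/2\ge 3$ (the case $K\cong\Z_2$ is the excluded $G\cong\Z_2\oplus\Z_2$). Setting $N:=\{0\}\oplus K$, I take
$$ A:=\{0\}\cup\big((u+N)\stm\{u\}\big), $$
so $|A|=|G|/2$ and $A=A^\pm$. It generates $G$ (its differences fill $N$ and it meets $u+N$); since the only $\psi$-even element is $0$, one gets $\<A\>_2\cap(u+N)=(u+N)\stm\{u\}$, whence $u\notin\<A\>_2$ and $\diam_A(G)\ge 3$. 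Aperiodicity holds because a nonzero period $t=u+v_0\in A$ would force $v+v_0=0$ for every $v\in N\stm\{0\}$, impossible once $|K|\ge 3$. Finally $A$ is automatically $3$-maximal: any proper extension still generates $G$ and has size exceeding $|G|/2\ge\s_3(G)$ by Lemma~\ref{l:lem82}, so its diameter drops below $3$. Hence $\t_3(G)\ge|G|/2$.

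For part~(iii), $\Z_2$ is not a summand, so every involution lies in $2\ast G$; combined with $\exp(G)>4$ this forces $\exp(G)\ge 8$. I would fix $y$ with $\ord(y)=\exp(G)\ge 8$, so $y\notin 2\ast G$ and there is a homomorphism $\psi\colon G\to\Z_2$ with $\psi(y)=1$. With $N:=\ker\psi$ and $c\in G\stm N$ (so $y,-y\in c+N$ and $y\neq-y$) I take
$$ A:=\{0\}\cup\big((c+N)\stm\{y,-y\}\big), $$
of size $|G|/2-1$, symmetric, and (exactly as in (ii)) aperiodic and generating once $|N|=|G|/2\ge 5$; moreover $\<A\>_2=G\stm\{y,-y\}$, so $\diam_A(G)\ge 3$. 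The hard part is the $3$-maximality, which here cannot be read off from cardinality. I must show $y\in\<A\cup\{x,-x\}\>_2$ for every $x\notin A$; the only such $x$ lie in $(N\stm\{0\})\cup\{y,-y\}$, and for $x\in N\stm\{0\}$ one writes $y=x+(y-x)$ with $y-x\in A$, the single exception $x=2y$ being repaired via $y=-x+3y$ with $3y\in A$. This is exactly where $\ord(y)\ge 8$ is used: it guarantees $2y\neq 0\neq 4y$, so $y-x$ and $3y$ avoid the deleted pair $\{y,-y\}$; an involution or an order-$4$ target would leave $A$ extendable, which is precisely why the bound here is only $|G|/2-1$. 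The residual small case $|N|<5$ forces $G$ cyclic and is covered by Theorem~\ref{t:thm29}. Thus $\t_3(G)\ge|G|/2-1$, and the main obstacle I anticipate is exactly this maximality verification in (iii), together with the correct choice of $\ord(y)$.
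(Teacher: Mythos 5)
Your constructions are exactly the paper's --- a transversal of the antipodal pairs in (i), and the sets $\{0\}\cup\big((h+K)\stm\{h\}\big)$, resp.\ $\{0\}\cup\big((h+K)\stm\{h,-h\}\big)$, relative to an index-two subgroup $K$ in (ii) and (iii) --- so the proof is correct and follows the same route, with the verifications arranged slightly differently: in (i) you get aperiodicity from $G\stm\{0\}=A\cup(-A)$ where the paper uses $\gcd(|A|,|G|)=1$; in (ii) both you and the paper deduce $3$-maximality for free from the bound $\s_3(G)\le |G|/2$ of Lemma~\ref{l:lem82}; and in (iii) your direct verification ($y=x+(y-x)$ for $x\ne 2y$, repaired by $y=-x+3y$ when $x=2y$, both legal because $\ord(y)\ge 8$) is the same computation as the paper's contradiction argument ($h-g=-h=h+g$ forces $4h=2g=0$). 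The one substantive difference is in your favour: both maximality arguments rest on the identity $\<A\>_2=G\stm\{y,-y\}$, which you correctly note requires $|N|\ge 5$, and you dispose of the residual case $G\cong\Z_8$ by Theorem~\ref{t:thm29} --- legitimate, since that theorem is proved independently in Section~\ref{s:bounds_II}. The paper asserts this identity without restriction, and it actually fails for $G=\Z_8$: there $A=\{0,3,5\}$ and $\<A\>_2=\{0,2,3,5,6\}$ misses the element $4$, so the paper's own proof has a (repairable) gap in exactly the case you isolate, even though $A$ is still $3$-maximal there. The only blemishes in your write-up are cosmetic: in (ii) you invoke a homomorphism $\psi$ that you only define in (iii), and, like the paper, your part (ii) tacitly assumes $|K|\ge 3$, which is harmless since the degenerate case $G\cong\Z_2$ has $\diam(G)=1$ and never occurs where the lemma is applied.
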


\begin{proof}
  (i) As $G$ contains no elements of order two, $G\stm\{0\}$ is the
  disjoint union of $2$-subsets of the form $\{g,-g\}\ (g\in
  G\stm\{0\})$. Suppose that $A\seq G$ contains exactly one element
  from each of these $2$-subsets. Then it is immediate that
  $|A|=(|G|-1)/2$ and $0\notin 2A$. Since $\gcd(|G|,|A|)=1$, the set
  $A$ is aperiodic; see Section~\ref{s:subsec22}.

  \smallskip\noindent (ii) Fix an element $h \in G$ with $\ord(h)=2$
  and a subgroup $K\le G$ such that $G=K\oplus\<h\>$. Put
  $A:=\{0\}\cup(K+h)\stm\{h\}$. As $|A|=|G|/2$, it suffices to show
  that $A$ is $3$-maximal and aperiodic. The former follows from the
  first part of Lemma~\ref{l:lem82} and $\<A\>_2=G\stm\{h\}$. This
  also yields
  $\pi(A)\seq\pi(\<A\>_2)=\pi(G\stm\<A\>_2)=\pi(\{h\})=\{0\}$.

  \smallskip\noindent (iii) Fix an element $h\in G$ of order
  $\ord(h)>4$ and a subgroup $K\le G$ of index $[G:K]=2$ so that
  $G=K+\<h\>$. Put $A:=\{0\}\cup(K+h)\stm\{h,-h\}$.  Evidently, we
  have $|A|=|G|/2-1$.  Since $|G|$ is divisible by four, we obtain
  $\gcd(|G|,|A|)=\gcd(2,|G|/2-1)=1$. Therefore $A$ is aperiodic;
  see Section~\ref{s:subsec22}.

  It remains to show that $A$ is $3$-maximal. For this we observe that
  $\<A\>_2=G\stm\{h,-h\}$. For a contradiction, suppose that there
  exists $g\in G\stm A$ satisfying $\<\{g\}\cup A\>_2\neq G$. Then
  $g\in K\stm\{0\}$, and $h-g,h+g\notin A$. This implies that
  $h-g=-h=h+g$, hence $4h=2g=0$, a contradiction.
\end{proof}

\begin{lemma}\label{l:lem84}
  Let $G$ be a non-cyclic finite abelian group of odd order with
  $\diam(G)\ge 3$. Writing $n:=\exp(G)$, we have
  $$ \t_3(G) = \begin{cases}
  \frac{n-1}{2n}\,|G| -1  &\text{if $n \equiv 1\mmod 4$,}\\
  \frac{n-1}{2n}\,|G|     &\text{if $n \equiv 3\mmod 4$.}
               \end{cases} $$
\end{lemma}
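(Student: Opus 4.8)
The plan is to prove the two matching bounds separately. For the upper bound I would invoke Lemma~\ref{l:lem82}: every aperiodic $3$-maximal subset of $G$ generates $G$ (as $\diam(G)\ge3$) and is a fortiori $3$-maximal, so $\t_3(G)\le\s_3(G)$, and the second case of Lemma~\ref{l:lem82} gives precisely the two asserted values as upper bounds for $\s_3(G)$. It then remains to construct, for each residue of $n$ modulo $4$, an aperiodic $3$-maximal generating set of the corresponding size.

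For the construction I would fix $z\in G$ with $\ord(z)=n=\exp(G)$, split $G=H\oplus\<z\>$ with $\<z\>\cong\Z_n$, and write elements as pairs $(h,j)$ with $h\in H$, $j\in\Z_n$, so that $z=(0,1)$. The requirement $z\notin\<A\>_2=A^\pm+A^\pm$ forces, inside each coset $\{h\}\times\Z_n$, that the set $S_h\seq\Z_n$ of occurring second coordinates contain no two elements differing by $1$; such a ``gap set'' has at most $(n-1)/2$ elements. I would take each $S_h$ to be a maximum gap set, couple the cosets by $S_{-h}=-S_h$ so that $A:=\bigcup_h\{h\}\times S_h$ satisfies $A=A^\pm$, and in the zero coset choose the explicit symmetric set $S_0=\{0,\pm2,\pm4\longc\pm2t\}$ with $t$ as large as possible. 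Symmetry forces $|S_0|$ to be odd, so $|S_0|=(n-1)/2$ when $n\equiv3\mmod4$ but only $|S_0|=(n-1)/2-1$ when $n\equiv1\mmod4$; this parity obstruction is exactly what produces the ``$-1$''. Counting one zero coset together with the $(|H|-1)/2$ conjugate pairs, each contributing $n-1$, reproduces the target sizes $\frac{n-1}{2n}|G|$ and $\frac{n-1}{2n}|G|-1$.

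Everything then reduces to the single claim $\<A\>_2=G\stm\{z,-z\}$, and granting it the remaining properties are cheap. Aperiodicity follows from $\pi(A)\seq\pi(\<A\>_2)=\pi(G\stm\<A\>_2)=\pi(\{z,-z\})=\{0\}$, the last equality because $n$ is odd (so $2z,4z\neq0$). Generation follows since a subgroup of $G$ omitting only the two elements $z,-z$ of a group of order exceeding $4$ must be all of $G$. Finally $3$-maximality follows by showing that adjoining any $g=(h,j)\notin A$ recovers $z$: a maximum gap set is also \emph{maximal}, i.e.\ $j+1\in S_h$ or $j-1\in S_h$ for every $j\notin S_h$, whence $z\pm g\in A$ --- the only exceptions occurring in the zero coset when $n\equiv1\mmod4$, at the two central points $(n\pm1)/2$ of the large gap of $S_0$, which are precisely $\pm z/2$ and are handled instead by $z=\pm2g$.

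The hard part will be the claim $\<A\>_2=G\stm\{z,-z\}$ itself, and within it the complete covering of every \emph{non-zero} $\<z\>$-coset. In the zero coset a direct computation gives $S_0+S_0=\Z_n\stm\{\pm1\}$ when $n\equiv3\mmod4$, and $S_0+S_0=\Z_n\stm\{\pm1,\pm3\}$ when $n\equiv1\mmod4$, the residues $\pm3$ then being recovered from $S_h-S_h$ for a suitably chosen non-zero coset; since each $S_h+S_{-h}=S_h-S_h$ avoids $\pm1$, coset $0$ of $\<A\>_2$ is exactly $\Z_n\stm\{\pm1\}$, as needed. For a non-zero coset $c$ one must instead prove $\bigcup_{h_1+h_2=c}(S_{h_1}+S_{h_2})=\Z_n$. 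Here I would apply Kneser's theorem (Corollary~\ref{c:cor44}) to bound the number of residues omitted by each individual sumset $S_{h_1}+S_{h_2}$, and then exploit the remaining freedom in the choice of the maximum gap sets $S_h$ (subject to $S_{-h}=-S_h$) to displace these omitted residues relative to one another across the at least three splits $h_1+h_2=c$ available when $|H|\ge3$, forcing their union to exhaust $\Z_n$. Making this covering argument robust against the divisor structure of $n$ --- in particular when some of the sumsets $S_{h_1}+S_{h_2}$ are periodic --- is the principal obstacle and where the real work lies.
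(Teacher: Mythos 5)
Your upper-bound half is fine, and your counting of the construction's size (including the parity obstruction that produces the ``$-1$'' when $n\equiv 1\mmod 4$) is correct. But the construction half is not a proof: you make everything rest on the covering claim $\<A\>_2=G\stm\{z,-z\}$, and this claim is exactly what you do not establish. It is not a routine verification. It requires that for every non-zero $c\in H$ one has $\bigcup_{h_1+h_2=c}(S_{h_1}+S_{h_2})=\Z_n$, and this depends delicately on how the maximum gap sets are chosen. For instance, when $n\equiv 3\mmod 4$ the set $\{0,\pm 2\longc\pm 2t\}$ is itself a symmetric maximum gap set, so taking every $S_h$ equal to it is an admissible choice; but then every sumset $S_{h_1}+S_{h_2}$ equals $\Z_n\stm\{1,-1\}$, so $\<A\>_2$ misses the entire set $\{(h,\pm 1)\colon h\in H\}$ rather than two elements (and $A$ is even periodic). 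Kneser's theorem only bounds the \emph{number} of residues each individual sumset omits; it gives no control over \emph{where} they lie, the constraint $S_{-h}=-S_h$ couples the choices, and when $|H|=3$ a non-zero coset $c$ admits only two distinct sumsets, $S_0+S_c$ and $-(S_c+S_c)$, whose complements must be forced apart. Handling this uniformly in $|H|$, in the divisor structure of $n$, and in the presence of periodic sumsets is genuine work that you only gesture at; as it stands the proposal is a program, not a proof.

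What you are missing is that the full determination of $\<A\>_2$ is unnecessary, and this is how the paper escapes the difficulty. Since Lemma~\ref{l:lem82} already gives $\s_3(G)\le U$, where $U$ is the asserted value, any \emph{aperiodic generating} set $A$ with $|A|=U$ and $\<A\>_2\neq G$ is automatically $3$-maximal: any $3$-maximal extension $A'\supseteq A$ is still a generating set of diameter at least $3$, so $|A'|\le\s_3(G)\le U=|A|$, forcing $A'=A$. Hence one needs to exhibit only a single element outside $\<A\>_2$. The paper's construction is designed for exactly that: writing $G=H\oplus\Z_n$ and taking the half-set $B\seq H$ of Lemma~\ref{l:lem83}(i) (so $|B|=(|H|-1)/2$, $0\notin 2B$, and $B$ is aperiodic), it uses the band $H+\{(0,-k+1)\longc(0,k-1)\}$ of full $H$-cosets together with the half-cosets $(B+\{(0,k)\})\cup(-B-\{(0,k)\})$, with the obvious modification (two layers of half-cosets plus $(0,\pm k)$) when $n\equiv 3\mmod 4$, where $k=\lfl n/4\rfl$. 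The condition $0\notin 2B$ makes it immediate that $(0,2k)$, respectively $(0,2k+1)$, lies outside $\<A\>_2$; aperiodicity follows from that of $B$ by projecting; and the count gives $U$. If you wish to salvage your gap-set construction, you could invoke the same maximality-by-cardinality argument, discard the covering claim, and verify only generation and aperiodicity for one concrete choice of the $S_h$; but as submitted, the heart of your argument is missing.
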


\begin{proof}
  In view of Lemma~\ref{l:lem82}, it suffices to construct an
  aperiodic generating set $A$ for $G$ of diameter $\diam_A(G)\ge 3$
  and appropriate size.

  Since $G$ is not cyclic, we may assume that $G = H \oplus \Z_n$
  where $\{0\} \neq H \le G$. Elements of $G$ will be written as pairs
  $(h,x)$ with $h \in H$ and $x \in \Z_n$.

  By Lemma~\ref{l:lem83}\,(i), there exists an aperiodic subset $B\seq
  H$ of cardinality $|B| = (|H|-1)/2$ such that $0\notin 2B$, and all
  the more so, $0 \notin B$. Put $k:=\lfl n/4\rfl$ so that $k\ge 1$,
  unless $n=3$ (in which case $G$ is homocyclic of exponent $3$).

  If $n = 4k+1$, we define
  \begin{multline*}
    A := ( H + \{ (0,-k+1),(0,-k+2)\longc(0,k-2),(0,k-1) \} ) \\
    \cup ( B+\{(0,k)\} ) \cup ( -B-\{(0,k)\} ).
  \end{multline*}
  If $n=4k+3\ge 7$, we define
  \begin{multline*}
    A := ( H + \{ (0,-k+1),(0,-k+2)\longc(0,k-2),(0,k-1) \} ) \\
    \cup ( B+\{(0,k),(0,k+1)\} ) \cup ( -B-\{(0,k),(0,k+1)\} ) \\
    \cup \{(0,k),(0,-k)\}.
  \end{multline*}
  Finally, if $n=3$, we define
  $$
  A := ( B+\{(0,1)\} ) \cup ( -B-\{(0,1)\} ) \cup \{(0,0)\}.
  $$

  We have $\pi(A) = 0$. Indeed, if $n \neq 3$, then projecting onto
  the second coordinate shows that $\pi(A) \seq H$, and thus
  $\pi(A) \seq \pi(B) = \{0\}$. If $n =3$, it is likewise easy to
  see that $\pi(A) \seq H$ and thus $\pi(A) \seq \pi(B) =
  \{0\}$.

Moreover, it is easily verified that $\<A\>=G$ and $\diam_A(G)\ge 3$: the
latter follows from
  $$ \<A \>_2 \not\ni \begin{cases}
          (0,2k)   &\text{if $n = 4k+1$,}\\
          (0,2k+1) &\text{if $n = 4k+3$.}
                      \end{cases}
  $$
  Finally, we count
  $$ |A| = \begin{cases}
       (2k-1)|H| + 2|B| = 2k|H| - 1     &\text{if $n = 4k+1$,}\\
       (2k-1)|H| + 4|B| + 2 = (2k+1)|H| &\text{if $n = 4k+3 \ge 7$,}\\
        2|B|+1 = |H|                    &\text{if $n=3$.}
           \end{cases}
  $$
  Noticing that $|H|=|G|/n$ we get
  $$ |A| = \begin{cases}
  \frac{n-1}{2n}\,|G| -1 &\text{if $n \equiv 1\mmod 4$,}\\
  \frac{n-1}{2n}\,|G|    &\text{if $n \equiv 3\mmod 4$.}
           \end{cases}
  $$
\end{proof}

\begin{lemma}\label{l:lem85}
  Let $G$ be a finite abelian group with $\diam(G)\ge 3$ and
  $\rk_2(G)>\rk_2(2\ast G)$. Then $\t_3(G)=\s_3(G)=|G|/2$.
\end{lemma}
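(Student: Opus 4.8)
The plan is to sandwich both invariants between $|G|/2$ from above and below, assembling results that are already in place. First I would record the elementary inequality $\t_3(G)\le\s_3(G)$: every aperiodic $3$-maximal generating set is in particular a $3$-maximal generating set, so the maximum defining $\t_3(G)$ is taken over a subfamily of the sets defining $\s_3(G)$. Combining this with the first assertion of Lemma~\ref{l:lem82} (applicable since $\diam(G)\ge 3$), which gives $\s_3(G)\le|G|/2$ via the boxing principle, I obtain $\t_3(G)\le\s_3(G)\le|G|/2$.

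For the matching lower bound I would invoke Lemma~\ref{l:lem83}(ii), which asserts $\t_3(G)\ge|G|/2$ exactly under the hypothesis $\rk_2(G)>\rk_2(2\ast G)$, provided $G\not\cong\Z_2\oplus\Z_2$. The only thing to verify is that this exceptional group is excluded in the present situation, and this is immediate: by Theorem~\ref{t:thm21} we have $\diam(\Z_2\oplus\Z_2)=\lfloor 2/2\rfloor+\lfloor 2/2\rfloor=2$, whereas we are assuming $\diam(G)\ge 3$. Hence $G\not\cong\Z_2\oplus\Z_2$, and Lemma~\ref{l:lem83}(ii) applies to give $\t_3(G)\ge|G|/2$.

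Putting the two bounds together yields $|G|/2\le\t_3(G)\le\s_3(G)\le|G|/2$, forcing equality throughout. I would also note in passing that $|G|/2$ is a genuine integer here, since the hypothesis $\rk_2(G)>\rk_2(2\ast G)$ forces an invariant factor of order $\equiv 2\pmod 4$ and hence $|G|$ even. The proof is essentially a bookkeeping of earlier lemmata rather than a new computation, so I expect no substantial obstacle; the single point that must not be overlooked is the observation that $\diam(G)\ge 3$ rules out the degenerate group $\Z_2\oplus\Z_2$, which is precisely the one case that Lemma~\ref{l:lem83}(ii) leaves uncovered.
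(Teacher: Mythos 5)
Your proof is correct and follows exactly the paper's own argument: the upper bound $\t_3(G)\le\s_3(G)\le|G|/2$ from Lemma~\ref{l:lem82} combined with the lower bound $\t_3(G)\ge|G|/2$ from Lemma~\ref{l:lem83}\,(ii). Your explicit verification that $\diam(G)\ge 3$ excludes $\Z_2\oplus\Z_2$ is a detail the paper leaves implicit, and it is a worthwhile addition.
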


\begin{proof}
  Lemma~\ref{l:lem82} shows that $\t_3(G)\le\s_3(G)\le |G|/2$, while by
  Lemma~\ref{l:lem83}\,(ii) we have $\t_3(G)\ge |G|/2$.
\end{proof}

\begin{lemma}\label{l:lem86}
  Let $G$ be a finite abelian group and let $A \seq G$. Suppose that
  $2\ast G\not\seq\<A\>_2$. Then
  $$
  |A| \le (|G|-2^{\rk_2(G)})/2.
  $$
\end{lemma}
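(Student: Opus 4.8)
The plan is to exploit the hypothesis by fixing a witness $z \in (2\ast G)\stm\<A\>_2$ and studying the involution $\sigma\colon G\to G$ given by $\sigma(g)=z-g$. Writing $z=2w$ with $w\in G$ (possible since $z\in 2\ast G$), the idea is to partition $G$ into the orbits of $\sigma$ and to show that $A$ can contain at most one element from each orbit, in fact none from the singleton orbits. Summing these local bounds over all orbits will produce the required global bound on $|A|$.

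First I would record the two elementary constraints imposed by $z\notin\<A\>_2=A^\pm+A^\pm$. If $g\in G$ is a fixed point of $\sigma$, that is $2g=z$, then $g\in A$ would force $z=g+g\in A+A\seq\<A\>_2$, a contradiction; hence no fixed point of $\sigma$ lies in $A$. If instead $\{g,\sigma(g)\}$ is a genuine two-element orbit and both of its members belonged to $A$, then $z=g+(z-g)\in A+A\seq\<A\>_2$, again impossible; hence each two-element orbit meets $A$ in at most one point.

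Next I would count the fixed points. The equation $2g=z=2w$ is equivalent to $2(g-w)=0$, so the fixed-point set is the coset $w+T$, where $T:=\{t\in G\colon 2t=0\}$ is the $2$-torsion subgroup of $G$. By the description of $\rk_2(G)$ recalled before Theorem~\ref{t:thm27} we have $|T|=2^{\rk_2(G)}$, so $\sigma$ has exactly $2^{\rk_2(G)}$ fixed points and $(|G|-2^{\rk_2(G)})/2$ two-element orbits. Combining this with the two constraints above yields
$$
|A|\le 0\cdot 2^{\rk_2(G)}+1\cdot\frac{|G|-2^{\rk_2(G)}}{2}=\frac{|G|-2^{\rk_2(G)}}{2},
$$
as claimed.

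The argument is essentially an orbit-counting argument and presents no serious obstacle; the only point demanding care is the computation of the number of fixed points, which is precisely where the hypothesis $z\in 2\ast G$ (as opposed to an arbitrary element outside $\<A\>_2$) is indispensable. Were $z$ not a double, $\sigma$ could be fixed-point-free and the method would only deliver the weaker bound $|A|\le|G|/2$.
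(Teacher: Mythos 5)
Your proof is correct and is essentially the paper's own argument in slightly different packaging: the paper fixes $g$ with $2g\notin\langle A\rangle_2$ and observes that $A-g$, $A+g$, and the $2$-torsion subgroup $X=\{x\in G\colon 2x=0\}$ (of size $2^{\rk_2(G)}$) are pairwise disjoint, which is the same counting you perform via the involution $x\mapsto z-x$, whose fixed points form the coset $w+X$ and whose two-element orbits each meet $A$ at most once. Both arguments reduce to the packing inequality $2|A|+2^{\rk_2(G)}\le|G|$, so your proposal matches the paper's proof in substance.
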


\begin{proof}
  Fix $g\in G$ such that $2g\notin\<A\>_2$, and put $X:=\{x\in G\colon
  2x=0\}$. Then the sets $A-g$, $A+g$, and $X$ are pairwise disjoint,
  whence
  $$
  |A| \le (|G|-|X|)/2 = (|G|- 2^{\rk_2(G)})/2.
  $$
\end{proof}

\begin{lemma}\label{l:lem87}
  Let $G$ be a finite abelian group with $\rk_2(G)=\rk_2(2\ast G)\ge
  1$ and $\exp(G)>4$. Then we have $\t_3(G)=|G|/2-1$.
\end{lemma}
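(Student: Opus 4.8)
The lower bound $\t_3(G)\ge|G|/2-1$ is already supplied by Lemma~\ref{l:lem83}(iii), whose hypotheses are exactly ours, so the whole task is to prove the matching upper bound $\t_3(G)\le|G|/2-1$. Since $\rk_2(G)\ge 1$ forces $|G|$ to be even, and Lemma~\ref{l:lem82} gives $\t_3(G)\le\s_3(G)\le|G|/2$, it suffices to show that no aperiodic $3$-maximal set $A\seq G$ has $|A|=|G|/2$; together with $|A|\le|G|/2$ this forces $|A|\le|G|/2-1$ for every aperiodic $3$-maximal $A$, that is $\t_3(G)\le|G|/2-1$. So the plan is to assume $|A|=|G|/2$ and derive a contradiction.

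First I would extract the rigid structure forced by equality. As in the paper's other arguments, maximality gives $A=A^\pm$, so $A=-A$, $0\in A$, and $\<A\>_2=A+A\snqq G$. Pick $g\in G\stm(A+A)$; note $g\neq 0$ since $0\in A+A$. Because $A=-A$ we have $(g+A)\cap A=\varnothing$, so $|A|+|A+g|\le|G|$, and the assumption $|A|=|G|/2$ forces $G=A\sqcup(A+g)$ with $A+g=G\stm A$. Thus $A$ is a transversal of $\langle g\rangle=\{0,g\}$ containing $0$: exactly one of $x,x+g$ lies in $A$ for each coset. Using $A=-A$ once more, the complement $G\stm A=A+g$ is symmetric, so $A+g=-(A+g)=A-g$, whence $A+2g=A$, i.e.\ $2g\in\pi(A)$. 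Since $A$ is aperiodic this yields $2g=0$, so $\ord(g)=2$.

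The crux is then to feed the hypothesis $\rk_2(G)=\rk_2(2\ast G)$ into this picture. I would first record the elementary fact that this hypothesis is equivalent to every even invariant factor being divisible by $4$, and hence to the inclusion of the $2$-torsion into the doubles: an element of order two has each nonzero coordinate equal to $m_i/2$, which lies in $2\Z_{m_i}$ precisely when $4\mid m_i$. Applying this to our $g$ of order $2$ produces $c\in G$ with $g=2c$; from $2c=g\neq 0$ and $4c=2g=0$ we get $\ord(c)=4$, so in particular $c+g=3c=-c$.

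Finally I would close the loop. If $c\in A$, then $g=c+c\in A+A=\<A\>_2$, contradicting $g\notin A+A$; hence $c\notin A$. As $A$ is a transversal of $\langle g\rangle$, exactly one of $c,c+g$ lies in $A$, so $c+g\in A$, i.e.\ $-c\in A$; but $A=-A$ then forces $c\in A$, a contradiction. This rules out $|A|=|G|/2$ and completes the upper bound. The main obstacle is the first step --- recognizing that the extremal cardinality rigidly pins $A$ down as a symmetric transversal of an order-two subgroup, with $2g=0$ coming out of aperiodicity --- after which the divisibility hypothesis delivers the contradiction almost mechanically. I note that $\exp(G)>4$ is needed only to invoke the lower bound of Lemma~\ref{l:lem83}(iii), not in the upper bound argument above.
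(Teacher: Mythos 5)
Your proof is correct, and while it follows the paper's overall strategy (lower bound from Lemma~\ref{l:lem83}(iii), then rule out an aperiodic $3$-maximal $A$ with $|A|=|G|/2$), your upper-bound argument takes a genuinely different and more elementary route. The paper derives its contradiction by combining Lemma~\ref{l:lem62} with Kneser's theorem (Corollary~\ref{c:cor44}): since $\<A\>_2$ inherits aperiodicity from $A$, Kneser gives $|\<A\>_2|\ge 2|A|-1=|G|-1$, so $\<A\>_2=G\stm\{h\}$ for a single element $h$ with $2h=0$; the hypothesis $\rk_2(G)=\rk_2(2\ast G)$ places $h$ in $2\ast G$, and then Lemma~\ref{l:lem86} yields $|A|\le(|G|-2^{\rk_2(G)})/2\le|G|/2-1$, a contradiction. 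You avoid Kneser altogether: the exact equality $|A|=|G|/2$, together with the disjointness of $A$ and $g+A$ for \emph{any} $g\notin A+A$, already forces the partition $G=A\sqcup(g+A)$, from which symmetry of $A$ and aperiodicity give $2g=0$; and in place of Lemma~\ref{l:lem86} you finish with the short observation that $c\notin A$ (else $g=2c\in\<A\>_2$), while the transversal property and $c+g=-c$ put $-c$ in $A$, contradicting $A=-A$. Both proofs share the same skeleton --- the missing element has order two, and the rank hypothesis lifts it to $2c$ with $\ord(c)=4$ --- but yours trades the Kneser machinery (and its stronger by-product $|\<A\>_2|=|G|-1$) for a self-contained counting argument, whereas the paper's modular reliance on Lemma~\ref{l:lem86} lets that lemma do double duty in the homocyclic exponent-$4$ case (Lemmata~\ref{l:lem88} and~\ref{l:lem89}). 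Your closing remark is also accurate: in the paper too, $\exp(G)>4$ enters only through the lower-bound construction, the upper bound requiring only $\rk_2(G)=\rk_2(2\ast G)\ge 1$ (consistent with Lemma~\ref{l:lem89}, where the bound $|G|/2-1$ still holds but is not attained).
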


\begin{proof}
  In view of Lemmata~\ref{l:lem82} and \ref{l:lem83}(iii), it suffices to
  show that $\t_3(G)\neq |G|/2$. For a contradiction, suppose that $A$
  is a $3$-maximal aperiodic subset of $G$ of cardinality $|A|=|G|/2$.
  Kneser's theorem and Lemma \ref{l:lem62} show that
  $|\<A\>_2|=|G|-1$,
  so $\<A\>_2=G\stm\{h\}$ for some $h\in\{x\in G \colon 2x=0\} \seq
  2\ast G$. Now by Lemma~\ref{l:lem86} we have $|A|\le
  (|G|-2)/2=|G|/2-1$, a contradiction.
\end{proof}

\begin{lemma}\label{l:lem88}
  Let $G = \Z_4 \longop \Z_4$ be a homocyclic group
  of exponent $4$ and rank $r\ge 2$. Suppose that $A\seq G$ is a
  $3$-maximal subset such that $(2\ast G)\cup\<A\>_2\neq G$. Then $A$
  is periodic.
\end{lemma}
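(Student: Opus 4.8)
The plan is to locate a nonzero period of $A$, the natural candidate being the involution $2g$, where $g$ is the element of order four with $g\notin\<A\>_2$ supplied by the hypothesis $(2\ast G)\cup\<A\>_2\neq G$. First I would clear away the degenerate possibility: a $3$-maximal set that fails to generate $G$ must be a proper subgroup, hence periodic, so I may assume $A=A^\pm$ generates $G$. Put $T:=\<A\>_2=A+A$ and $S:=G\stm T$; by Lemma~\ref{l:lem62} we have $\pi(A)=\pi(T)$, so it is enough to show that $T$ is periodic. The driving tool is maximality: for every $w\in G\stm A$ the set $A\cup\{w\}$ properly contains $A$, so $\<A\cup\{w\}\>_2=G$, and since $2w=-2w$ in a group of exponent four this unwinds to
$$ S\seq (A+w)\cup(A-w)\cup\{2w\}. $$

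The first real step is to prove $2g\in T$. If instead $2g\in S$, then $2g\notin A$ and the inclusion with $w=2g$ (so $2w=0$) collapses to $S\seq A+2g$; but then $g\in S$ gives $-g=g-2g\in A\seq T$, contradicting $g\notin T=-T$. Hence $2g\in T$. Now the inclusion with $w=g$ reads $S\seq(A+g)\cup(A-g)\cup\{2g\}$, and because $2g\in T$ the stray point is removed, leaving
$$ S\seq C:=(A+g)\cup(A-g). $$
The container $C$ is invariant under translation by $2g$, since $(A+g)+2g=A-g$ and conversely; this singles out $\<2g\>$ as the prospective period and reduces the lemma to promoting the inclusion $S\seq C$ to the exact identity $S+2g=S$, equivalently $T+2g=T$.

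I expect this promotion to be the main obstacle, because trapping $S$ inside the $2g$-invariant set $C$ does not by itself prevent $S$ from meeting $T$ asymmetrically within $C$. I would attack it by assuming $A$, and hence $T$, aperiodic and seeking a contradiction: Kneser's theorem (Corollary~\ref{c:cor44}) then gives $|T|\ge 2|A|-1$, which I would play against the covering $G=T\cup C$, the disjointness relations $A\cap(A\pm g)=\varnothing$, and the full strength of the displayed inclusion for all $w\in G\stm A$ (not merely $w\in S$) to force $T=G$. A structurally cleaner reformulation, which I would pursue in parallel, is to pass to $\overline G:=G/\<2g\>\cong\Z_2\oplus\Z_4^{r-1}$: one checks that $\oA$ is again $3$-maximal — a point $\overline g'\notin\oA$ has its whole fibre $\{y,y+2g\}$ disjoint from $A$, so $\<A\cup\{y\}\>_2=G$ descends to $\<\oA\cup\{\overline g'\}\>_2=\overline G$ — and that $\overline g$ is an element of \emph{order two} with $\overline g\notin\<\oA\>_2$ and $\overline g\notin 2\ast\overline G$. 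This lands $\overline G$ squarely in the regime $\rk_2(\overline G)>\rk_2(2\ast\overline G)$ of Lemma~\ref{l:lem85} and reduces the present claim to its order-two analogue, opening the door to an induction on $|G|$; the delicate point, and the crux of the whole proof, is to transfer periodicity back across the quotient, which is precisely the identity $S+2g=S$ once more.
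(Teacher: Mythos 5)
Your reductions are correct as far as they go --- $A=A^\pm$, the identity $\pi(A)=\pi(T)$ via Lemma~\ref{l:lem62}, the maximality inclusion $S\seq(A+w)\cup(A-w)\cup\{2w\}$, the deduction $2g\in T$, and hence $S\seq(A+g)\cup(A-g)$ --- but the lemma \emph{is} the assertion $2g\in\pi(A)$, and that is exactly the step you leave open. Neither of your two strategies closes it. The Kneser/counting attack is only a declaration of intent. The quotient attack is not merely incomplete but structurally doomed: in the very situation the lemma aims at, namely $\pi(A)=\<2g\>$, the image $\oA$ in $\overline G:=G/\<2g\>$ is \emph{aperiodic} (the image of a set modulo its own period is always aperiodic; see Section~\ref{s:subsec22}), so no argument can establish that $\oA$ is periodic. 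Worse, the ``order-two analogue'' you propose to reduce to is false: Lemma~\ref{l:lem83}\,(ii), applied to $\overline G\cong\Z_2\oplus\Z_4^{r-1}$, produces an \emph{aperiodic} $3$-maximal set $A'=\{0\}\cup(K+h)\stm\{h\}$ with $\<A'\>_2=\overline G\stm\{h\}$ and $h\notin 2\ast\overline G$ of order two --- a counterexample to that analogue. So an induction on $|G|$ down this path cannot even start.

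The missing idea is to apply your displayed inclusion not at $w=g$ or $w=2g$ but at $w=a+2g$ for each individual $a\in A$; this is essentially the paper's entire proof. Suppose $a\in A$ and $a+2g\notin A$. Then $3$-maximality gives $g\in S\seq\big(A+(a+2g)\big)\cup\big(A-(a+2g)\big)\cup\{2(a+2g)\}$. The third option gives $g=2a\in 2\ast G$, excluded. The first gives $-a-g\in A$, hence $a+g\in A$ (as $A=-A$) and $g=(a+g)+(-a)\in A+A=T$; the second gives $a-g\in A$, hence $g=a+\big(-(a-g)\big)\in A+A=T$; both contradict $g\notin T$. Therefore $a+2g\in A$ for every $a\in A$, i.e.\ $A+2g\seq A$, whence $A+2g=A$ by finiteness; and since $g\notin 2\ast G$ forces $\ord(g)=4$, the element $2g$ is a non-zero period of $A$. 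This three-line element-wise argument is precisely the ``promotion'' of $S\seq(A+g)\cup(A-g)$ to $S+2g=S$ that you correctly identified as the crux but did not supply.
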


\begin{proof}
  Choose $h\in G\stm ((2\ast G)\cup\<A\>_2)$, so that in particular
  $\ord(h)=4$. We claim that $2h\in\pi(A)$. Indeed, it is enough to
  show that $a+2h\in A$ for any given $a\in A$, and since $A$ is
  $3$-maximal this will follow from $h\notin\<A\cup\{a+2h\}\>_2$.

  Since $h \notin 2 \ast G$, we have $h\neq 2 (a+2h)$. Since $a\in A$
  and $h \notin \<A\>_2$, we have $h-(a+2h) = -a-h \notin A$ and
  $h+(a+2h) = a-h \notin A$.  This shows that $h \notin \< A \cup
  \{a+2h\} \>_2$.
\end{proof}

\begin{lemma}\label{l:lem89}
Let $G=\Z_4\longop\Z_4$ be a homocyclic group of exponent $4$ and rank
 $r\ge 2$. Then $\t_3(G)=(|G|-\sqrt{|G|})/2=(4^r-2^r)/2$.
\end{lemma}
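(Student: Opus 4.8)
The plan is to establish the two inequalities $\t_3(G)\le(|G|-\sqrt{|G|})/2$ and $\t_3(G)\ge(|G|-\sqrt{|G|})/2$ separately, the first by a short deduction from the preceding lemmata and the second by an explicit construction. Throughout I write $G=\Z_4\longop\Z_4$ of rank $r\ge 2$, so that $|G|=4^r$, $\rk_2(G)=r$, and $2^r=\sqrt{|G|}$; note also that $2\ast G=\{x\in G\colon 2x=0\}$.

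For the upper bound, suppose $A$ is an aperiodic $3$-maximal subset of $G$. Since $A$ is aperiodic, the contrapositive of Lemma~\ref{l:lem88} forces $(2\ast G)\cup\<A\>_2=G$. As $A$ is $3$-maximal we have $\<A\>_2\ne G$, so there is an element $h\in G\stm\<A\>_2\seq 2\ast G$; in particular $2\ast G\not\seq\<A\>_2$. Now Lemma~\ref{l:lem86} applies and gives $|A|\le(|G|-2^{\rk_2(G)})/2=(|G|-\sqrt{|G|})/2$. Taking the maximum over all such $A$ yields $\t_3(G)\le(|G|-\sqrt{|G|})/2$.

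For the lower bound I must exhibit an aperiodic $3$-maximal set $A$ with $|A|=(|G|-\sqrt{|G|})/2$. The construction is guided by the equality case of Lemma~\ref{l:lem86}: if $|A|$ attains the bound and $g$ is an element of order $4$ with $2g\notin\<A\>_2$, then $A-g$, $A+g$ and $2\ast G$ must partition $G$. I would therefore fix a cyclic direct factor $\<g\>\cong\Z_4$, write $G=\<g\>\oplus K$ with $K\cong\Z_4\longop\Z_4$ of rank $r-1$, and build a symmetric set $A=A^\pm$ slice-by-slice along $\<g\>$ so that $A+g$, $A-g$, $2\ast G$ partition $G$. Since $2g\in 2\ast G$, such a set must avoid the coset $g+2\ast G$ (of size $2^r=\sqrt{|G|}$) and contain exactly one element from each $\pm$-pair of its complement; this already forces $|A|=(|G|-\sqrt{|G|})/2$. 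Moreover the disjointness of $A+g$ and $A-g$ is equivalent to $2g\notin A-A=\<A\>_2$, so the partition guarantees $\diam_A(G)\ge 3$. Finally any period of $A$ permutes the three parts of the partition and hence lies in $2\ast G$, so aperiodicity reduces to eliminating the residual periods of order $2$, which can be arranged by choosing the transversal data defining the slices generically (for instance via the first odd coordinate in $K$).

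The main obstacle is to upgrade the easy conclusion $\diam_A(G)\ge 3$ to genuine $3$-maximality: one must ensure that for \emph{every} $x\in G\stm A$ the enlarged set satisfies $\<A\cup\{x\}\>_2=G$, equivalently $G\stm\<A\>_2\seq(A+x)\cup(A-x)\cup\{2x,-2x\}$. This is a delicate constraint forcing the sumset $\<A\>_2$ to be large (omitting essentially only $2g$), and a naive transversal—while it realises the partition, has the right cardinality, and is aperiodic—typically fails to be maximal, since it can be enlarged to a periodic set of larger diameter-$3$ size. I expect this simultaneous control of $\<A\>_2$ and of the period of $A$ to be the technical heart of the argument, and the low-rank case $r=2$ may require separate explicit attention because there the available order-$2$ elements are too few to spread the sumset out comfortably.
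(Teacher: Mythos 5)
Your upper bound is correct and is exactly the paper's argument: by the contrapositive of Lemma~\ref{l:lem88}, every aperiodic $3$-maximal $A\seq G$ satisfies $(2\ast G)\cup\<A\>_2=G$, so the non-empty set $G\stm\<A\>_2$ lies in $2\ast G$, and Lemma~\ref{l:lem86} then gives $|A|\le(|G|-2^{\rk_2(G)})/2=(4^r-2^r)/2$.

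The lower bound, however, is a genuine gap, and you say so yourself: no set is ever exhibited, and the decisive property --- $3$-maximality --- is declared to be ``the technical heart'' and left unresolved. Your worry is not a formality; the constraints you impose (symmetry, aperiodicity, the partition $G=(A+g)\sqcup(A-g)\sqcup(2\ast G)$, cardinality $(4^r-2^r)/2$) genuinely do not suffice. For instance, with $r=2$ and $g=(0,1)$, the set $A''=\{(0,0),(2,0),(1,1),(3,3),(1,2),(3,2)\}\seq\Z_4\oplus\Z_4$ meets all of these constraints, yet $\<A''\cup\{(1,3)\}\>_2$ still omits $(1,0)$ and $(3,0)$, so $A''$ is not $3$-maximal; moreover the enlargement $A''\cup\{(1,3),(3,1)\}$ is periodic, with $(2,0)$ in its period --- precisely the failure mode you predicted but did not overcome. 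The paper closes this gap by a mechanism absent from your outline: induction on $r$. For $r=2$ it takes the explicit set $A=\{(k,0)\colon k\in\Z_4\}\cup\{(1,1),(-1,-1)\}$, for which $\<A\>_2=G\stm\{(0,2),(1,2),(-1,2)\}$ and maximality is checked directly. For $r\ge3$ it writes $G=H\oplus\Z_4$, takes by induction a $3$-maximal aperiodic $B\seq H$ with $|B|=(4^{r-1}-2^{r-1})/2$, and sets $A:=\{(h,0)\colon h\in H\}\cup\{(b,1)\colon b\in B\}\cup\{(-b,-1)\colon b\in B\}$. The key feature, which a ``generic transversal'' lacks, is that then $\<A\>_2=G\stm\{(h,2)\colon h\notin\<B\>_2\}$, so $3$-maximality of $A$ in $G$ reduces to $3$-maximality of $B$ in $H$ (adjoining any $(h,\pm1)$ with $h\notin B$ covers the missing part of the slice $H\times\{2\}$ because $\<B\cup\{h\}\>_2=H$), while aperiodicity is automatic: a period of $A$ must preserve the unique full slice $H\times\{0\}$, hence lies in $\pi(B)\times\{0\}=\{(0,0)\}$. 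Without such a reduction, your plan stalls exactly where you said it would.
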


\begin{proof}
  From Lemmata~\ref{l:lem86} and~\ref{l:lem88} it follows that
  $\t_3(G)\le(|G|-2^{\rk_2(G)})/2=(4^r-2^r)/2$. Thus, it remains to
  construct a $3$-maximal aperiodic subset $A\seq G$ of size
  $|A|=(4^r-2^r)/2$. We use induction on $r$.

  If $r=2$, then $G=\Z_4\oplus\Z_4$, and we define
  $$
  A := \{(k,0) \colon k\in\Z_4 \} \cup \{(1,1),(-1,-1)\}.
  $$
  Here $|A|=6=(16-4)/2$, and it is readily checked that $A$ is a
  $3$-maximal aperiodic subset of $G$. Indeed, we have
  $\<A\>_2=G\stm\{(0,2),(1,2),(-1,2)\}$.

  Now suppose that $r\ge 3$. We write $G=H\oplus\Z_4$, where $H$ is
  a homocyclic group of exponent $4$ and rank $r-1$. By the induction
  hypothesis, there exists a $3$-maximal aperiodic subset $B\seq H$ with
  $|B|=(4^{r-1}-2^{r-1})/2$. Put
  $$
  A := \{(h,0) \colon h\in H \} \cup \{ (b,1) \colon b\in B \} \cup
  \{(-b,-1) \colon b\in B \}. $$
  Then
  $|A|=|H|+2|B|=4^{r-1}+4^{r-1}-2^{r-1}=(4^r-2^r)/2$, and it is
  readily checked that $A$ is $3$-maximal and aperiodic. Indeed, we
  have $\<A\>_2=G\stm\{(h,2)\colon h\notin\<B\>_2\}$.
\end{proof}

\begin{lemma}\label{l:lem810}
  Let $G$ be a finite abelian group. Suppose that $G$ is not cyclic
  and $\rk_2(G)=\rk_2(2\ast G)\ge 1$. Then $\s_3(G)=|G|/2$.
\end{lemma}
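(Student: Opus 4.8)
The plan is to establish the two bounds $\s_3(G)\le|G|/2$ and $\s_3(G)\ge|G|/2$ separately. Since $G$ is non-cyclic and the hypothesis $\rk_2(G)=\rk_2(2\ast G)\ge 1$ forces every even invariant factor of $G$ to be divisible by $4$ (in particular $4\mid\exp(G)$), Theorem~\ref{t:thm21} gives $\diam(G)\ge 3$, so Lemma~\ref{l:lem82} already yields the upper bound $\s_3(G)\le|G|/2$. For the matching lower bound I would produce a proper subgroup $H\lneqq G$ whose quotient $\bar G:=G/H$ satisfies both $\rk_2(\bar G)>\rk_2(2\ast\bar G)$ and $\diam(\bar G)\ge 3$. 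Granting this, Lemma~\ref{l:lem85} gives $\s_3(\bar G)=|\bar G|/2$, and then Lemma~\ref{l:lem81} delivers $\s_3(G)\ge|H|\cdot\s_3(\bar G)=|G|/2$, as required.

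To build $H$, write $\exp(G)=2^aq$ with $q$ odd; the $2$-rank hypothesis gives $a\ge 2$. Using the basic splitting lemma of Section~\ref{s:auxiliary}, I would choose $g\in G$ of order $\exp(G)$ and a complement $K$ with $G=K\oplus\<g\>$, noting that $K\neq\{0\}$ because $G$ is non-cyclic. Taking $H\le\<g\>$ to be the subgroup of order $2^{a-1}$ produces $\bar G\cong K\oplus\Z_{2q}$ with $2q\equiv 2\mmod 4$. Since the invariant factors of $K$ form a subset of those of $G$, every even invariant factor of $K$ is still divisible by $4$, so the Sylow $2$-subgroup of $K$ has all its cyclic components of order at least $4$. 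Adjoining the $\Z_2$ contributed by $\Z_{2q}$ therefore only prepends a single component of order exactly $2$ to the Sylow $2$-subgroup, whence $\rk_2(\bar G)=\rk_2(2\ast\bar G)+1$, the desired strict inequality.

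The remaining — and, I expect, only delicate — point is to confirm $\diam(\bar G)\ge 3$. If $q\ge 3$ this is easy: $\Z_{2q}$ is a direct summand of $\bar G$ with $\diam(\Z_{2q})=q\ge 3$, so $\diam(\bar G)\ge\diam(\bar G/K)=\diam(\Z_{2q})\ge 3$ by Lemma~\ref{l:lem81}. If $q=1$ then $\exp(G)$ is a power of $2$, so $G$ and hence $K$ is a $2$-group and $\bar G=K\oplus\Z_2$; here I must rule out the small cases $K=\{0\}$ (which would make $G$ cyclic) and $K=\Z_2$ (whose even invariant factor is not divisible by $4$), and both are precisely excluded by the hypotheses, forcing $|K|\ge 4$ and hence $\diam(K)\ge 2$. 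As $K$ is a $2$-group, adjoining $\Z_2$ merely prepends a factor $2$ to its invariant factors, so Theorem~\ref{t:thm21} gives $\diam(\bar G)=\diam(K)+1\ge 3$. With $\diam(\bar G)\ge 3$ secured in both cases, the lifting argument of the first paragraph completes the proof.
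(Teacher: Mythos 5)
Your proof is correct, but your route to the lower bound is genuinely different from the paper's. Both arguments share the same skeleton: the upper bound $\s_3(G)\le|G|/2$ comes from Lemma~\ref{l:lem82} (your explicit check that the hypotheses force $\diam(G)\ge 3$ is a point the paper leaves implicit), and the lower bound is obtained by lifting along Lemma~\ref{l:lem81} from a suitable quotient. The paper, however, splits into two cases according to whether $\rk_2(G)=1$ or $\rk_2(G)\ge 2$: in the first case it writes $G=H\oplus\Z_m$ with $m$ even and not a power of $2$ and quotes Corollary~\ref{c:cor71} to get $\s_3(\Z_m)=m/2$; in the second it passes to a quotient $G/H\cong\Z_2\oplus\Z_4$ and exhibits the explicit four-element set $\{(0,0),(1,0),(0,1),(0,3)\}$ showing $\s_3(\Z_2\oplus\Z_4)\ge 4$. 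You instead give a single uniform construction: writing $\exp(G)=2^aq$ with $q$ odd and $a\ge 2$, you quotient by the subgroup of order $2^{a-1}$ inside a maximal cyclic summand to obtain $\bar G\cong K\oplus\Z_{2q}$, whose $2$-rank strictly drops under doubling, so that the already-settled Lemma~\ref{l:lem85} applies to $\bar G$. Your approach buys uniformity (no case split on $\rk_2(G)$) and conceptually reduces the ``equal $2$-ranks'' case to the ``unequal $2$-ranks'' case, reusing Lemma~\ref{l:lem85} instead of the cyclic-group formula of Corollary~\ref{c:cor71} and the ad hoc $\Z_2\oplus\Z_4$ computation; the price is the invariant-factor bookkeeping for $K$ (the claim that $K$ inherits type $(m_1\longc m_{r-1})$, which does follow from uniqueness of the type since $\exp(K)\mid\ord(g)$) and the separate verification that $\diam(\bar G)\ge 3$, including the edge case $q=1$, all of which you handle correctly.
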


\begin{proof}
  First suppose that $\rk_2(G)=1$. Then we can write $G=H\oplus\Z_m$,
  where $m$ is even and not a power of $2$. By Lemmata~\ref{l:lem82}
  and~\ref{l:lem81} and Corollary~\ref{c:cor71} we have
  $$
  |G|/2 \ge \s_3(G) \ge |H|\cdot \s_3(\Z_m) = |G|/2,
  $$
  and the claim follows.

  Now suppose that $\rk_2(G)\ge 2$. In this case we find $H\le G$ such
  that $G/H\cong\Z_2\oplus\Z_4$. Considering the set
  $A=\{(0,0),(1,0),(0,1),(0,3)\}\seq\Z_2\oplus\Z_4$, one concludes
  easily that $\s_3(\Z_2\oplus\Z_4)\ge 4$, and the proof can be
  completed as above.
\end{proof}

\begin{proof}[Proof of Theorem~\ref{t:thm27}]
  (i) Suppose that $|G|$ is odd. If $G$ is cyclic, the assertion
  follows from Corollary~\ref{c:cor71}. If $G$ is not cyclic, the
  assertion follows from Lemmata~\ref{l:lem82} and~\ref{l:lem84}.

  \smallskip\noindent (ii) Suppose that $|G|$ is even. If $G$ is
  cyclic, the assertion follow from Corollary~\ref{c:cor71}. Now
  suppose that $G$ is not cyclic. Then the claim regarding $\t_3(G)$
  follows from Lemmata~\ref{l:lem85}, \ref{l:lem87}, and~\ref{l:lem89}.
  Finally, the claim regarding $\s_3(G)$ follows from
  Lemmata~\ref{l:lem85} and~\ref{l:lem810}.
\end{proof}


\section{The size of small diameter sets, IV: General estimates}
\label{s:bounds_IV}

In this section we prove Theorems~\ref{t:thm212} and~\ref{t:thm213}.

\begin{proof}[Proof of Theorem~\ref{t:thm212}]
  It suffices to show that $\s_\rho(G)\leq\frac32\,\rho^{-1}|G|$ and
  that (i) implies (ii). Indeed, (ii) trivially implies (i), and it is
  easily seen that, if (i) and (ii) are equivalent, then equality
  holds in $\s_\rho(G)\leq\frac32\,\rho^{-1}|G|$ if and only if $G$
  has a subgroup $H$ such that $G/H$ is cyclic of order $2\rho$.

  Let $A$ be a generating set for $G$ with $\diam_A(G)\ge\rho$. We
  write $H:=\pi(\<A\>_{\rho-1})$ and note that both $G$ and
  $\<A\>_{\rho-1}$ are unions of $H$-cosets, whence
  $|G|\ge|\<A\>_{\rho-1}|+|H|$. Since
  $$
  \<A\>_{\rho-1} = (\rho-1)A^\pm+H = (\rho-1)(A^\pm+H),
  $$
Kneser's theorem yields
  $$
  |\<A\>_{\rho-1}| \ge (\rho-1)|A^\pm+H| - (\rho-2)|H|.
  $$
  Combining these observations, we obtain
  \begin{equation}\label{e:equ91}
     |G| \ge (\rho-1)|A^\pm+H| - (\rho-3)|H|.
  \end{equation}

  Since $0\in A^\pm\not\seq H$, the set $A^\pm+H$ is the union of at
  least two $H$-cosets. Moreover, if $A^\pm$ were the union of
  \emph{exactly two} $H$-cosets, then $A^\pm+H=H\cup(g+H)$ for some $g
  \in G \stm H$ satisfying $g+H=-g+H$, and hence $2g\in H$. But this
  would yield
  $$
  G = \<A^\pm\> \seq \< A^\pm + H \> = H \cup (g+H) =
  (\rho-1)(A^\pm + H) = \<A\>_{\rho-1} \snqq G,
  $$
  a contradiction.

  We conclude that $A^\pm+H$ consists of at least three $H$-cosets,
  and so $|A^\pm+H|\ge 3|H|$. From \eqref{e:equ91} we obtain
  \begin{equation}\label{e:equ92}
    |G| \ge \Big(\rho-1-\frac13\, (\rho-3)\Big) |A^\pm+H| \ge
     \frac23\, \rho |A|.
  \end{equation}
  This gives the upper bound $\s_\rho(G) \leq \frac32 \, \rho^{-1}
  |G|$; it remains to show that (i) implies (ii).

  To this end, in addition to our earlier assumption that $A$ is a
  generating set for $G$ with $\diam_A(G)\ge\rho$, we assume that $|A|
  = \frac32\, \rho^{-1}|G|$. Then \eqref{e:equ92} yields $A=A^\pm+H$,
  and this set is the union of exactly three $H$-cosets. Consequently,
  there are two possibilities: either $A = H \cup(g_1+H) \cup(g_2+H)$,
  where $g_1+H,g_2+H \in G/H$ are distinct elements of order two, or
  $A=(-g+H) \cup H \cup (g+H)$, where $g+H \in G/H$ is of order
  greater than two. In the first case we would have
  $$
  G = \<A\> = \{0,g_1,g_2,g_1+g_2\}+H = \<A\>_{\rho-1} \snqq G,
  $$
  a contradiction. Thus we are left with the second case: $A=(-g+H)
  \cup H \cup (g+H)$, where $g+H \in G/H$ has order strictly greater
  than two.

  Since $A$ generates $G$, the quotient group $G/H$ is cyclic and
  generated by $g+H$. Since
  $$
  \bigcup\, \{ kg+H \colon -\rho+1 \leq k \leq \rho-1 \} =
  \<A\>_{\rho-1}\snqq G,
  $$
  the order of $G/H$ is at least $2\rho$. As $|A| = \frac32\,
  \rho^{-1}|G|$, the estimate
  $$
  |A| = 3|H| = 3\,\frac{|G|}{|G/H|} \le 3\,\frac{|G|}{2\rho} = |A|
  $$
  yields $|G/H| = 2\rho$, as desired.
\end{proof}

\begin{proof}[Proof of Theorem~\ref{t:thm213}]
  Let $A \seq G$ be a generating set such that $\diam_A(G)\ge\rho$ and
  $|A^\pm|>4|G|/(3\rho-1)$. We set $H:=\pi(\<A\>_{\rho-1})$ and show
  that there exists $g\in G$ such that
  \begin{itemize}
  \item[(i)] $A\seq (-g+H)\cup H\cup(g+H)$, and $|A^\pm| >
    \big(3-\frac{\rho-3}{3\rho-1}\big) |H|$;
  \item[(ii)] $G/H$ is cyclic of order $2\rho\le |G/H|\le
    \frac94\,\rho-1$, and $g+H$ generates $G/H$.
  \end{itemize}

  As in the proof of Theorem~\ref{t:thm212} we see that $A^\pm+H$
  consists of three $H$-cosets: the possibility $|A^\pm+H|\ge 4|H|$ is
  ruled out, since otherwise
  $$
  |G|\ge \Big( \rho-1-\frac14\, (\rho-3) \Big) |A^\pm+H| \ge
  \frac{3\rho-1}4\,|A^\pm|,
  $$
  contrary to the assumptions. Exactly as before, the quotient
  group $G/H$ is cyclic of order at least $2\rho$, and there exists
  $g\in G$ such that $g+H$ is a generator of $G/H$ and
  $A^\pm+H=(-g+H)\cup H\cup (g+H)$.

  Finally, from
  $$
  3|H|=|A^\pm+H|\ge|A^\pm|>4|H||G/H|/(3\rho-1)
  $$
  it follows that $|G/H|\le\frac94\rho-1$, and from
  $$
  |A^\pm| > |H|\,\frac{4|G/H|}{3\rho-1} \ge
  |H|\,\frac{8\rho}{3\rho-1}
  $$
  we obtain $|A^\pm|>(3-(\rho-3)/(3\rho-1))|H|$, as required.
\end{proof}

In fact one can go further and, given $\eps>0$, describe in a similar
way the structure of those generating sets $A$ for $G$ which satisfy
$\diam_A(G)\ge\rho$ and $|A^\pm|>(1+\eps)|G|/(\rho-1)$. Indeed, in
this case $A^\pm+H$ consists of at most $\lceil\eps^{-1}\rceil$
$H$-cosets. However, for small $\eps$ the description is likely to
become quite complicated.

\medskip

\noindent
\textbf{Acknowledgment.} We thank the referee for reading our
manuscript so carefully. His (or her) suggestions and comments led to
several improvements of exposition.



\begin{thebibliography}{9}

\bibitem{b:bhkls} L.~Babai, G.~Hetyei, W.~M.~Kantor, A.~Lubotzky, and
  \'A.~Seress, On the diameter of finite groups, \emph{in} ``31st
  Annual Symposium on Foundations of Computer Science,'' Vol. II,
  \emph{held in} St. Louis, Missouri, 22-24 October 1990, IEEE
  Computer Society Press, 1990, 857--865.

\bibitem{b:bhl} A.~Bruen, L.~Haddad, and D.~Lucien, Binary codes and
  caps, \emph{J.~Combin. Des.} {\bf 6} (4) (1998), 275--284.

\bibitem{b:cp} W.~E.~Clark and J.~Pedersen, Sum-free sets in vector
  spaces over ${\rm GF}(2)$, \emph{Journal of Comb. Theory, Series A}
  \textbf{61} (2) (1992), 222--229.

\bibitem{b:chll} G.~Cohen, I.~Honkala, S.~Litsyn, and A.~Lobstein,
  ``Covering Codes,'' North-Holland Mathematical Library, Vol. 54,
  North-Holland Publishing Co., Amsterdam, 1997.

\bibitem{b:cz} G.~Cohen and G.~Z\'emor, Subset sums and coding theory,
  \emph{in} ``Structure theory of set addition'' (Deshouillers,
  Jean-Marc et al., Eds.), Ast\'erisque, Vol. 258, Soci\'et\'e
  Math\'ematique de France, Paris, 1999, 327--339.

\bibitem{b:dt} A.~A.~Davydov and L.~M.~Tombak, Quasiperfect linear
  binary codes with distance $4$ and complete caps in projective
  geometry (Russian), \emph{Problemy Peredachi Informatsii}
  \textbf{25} (4) (1989), 11--23. (Translation in \emph{Problems
    Inform. Transmission} \textbf{25} (4) (1989), 265--275.)

\bibitem{b:fp} G.~Faina and F.~Pambianco, On the spectrum of the
  values $k$ for which a complete $k$-cap in ${\rm PG}(n,q)$ exists,
  \emph{J.~Geom.} \textbf{62} (1--2) (1998), 84--98.

\bibitem{b:hr} Y.~O.~Hamidoune and \"O.~J.~R\"odseth, On bases for
  $\sigma$-finite groups, \emph{Math. Scand.} \textbf{78} (2) (1996),
  246--254.

\bibitem{b:kn1} M.~Kneser, Absch\"atzung der asymptotischen Dichte von
  Summenmengen, \emph{Math. Z.} \textbf{58} (1953), 459--484.

\bibitem{b:kn2} \bysame, Ein Satz \"uber abelsche Gruppen mit
  Anwendungen auf die Geometrie der Zahlen, \emph{Math. Z.}
  \textbf{61} (1955), 429--434.

\bibitem{b:lev} V.~F.~Lev, Generating binary spaces, \emph{submitted}.

\bibitem{b:mann} H.~B.~Mann, ``Addition Theorems: The Addition
  Theorems of Group Theory and Number Theory,'' Interscience
  Publishers, a division of John Wiley and Sons, New York, 1965.

\bibitem{b:z} G. Z\'emor, Subset sums in binary spaces, \emph{European
    J. Combin.} \textbf{13} (2) (1992), 221--230.
\end{thebibliography}
\end{document}